\documentclass[a4paper,11pt]{article}
\usepackage[utf8]{inputenc}
\usepackage[margin=1.00in]{geometry}

\usepackage{amsmath} 
\usepackage{amsthm}
\usepackage{amsfonts} 
\usepackage{doi} 
\usepackage{graphicx}
\usepackage{xcolor} 

\usepackage{enumerate}
\usepackage{amssymb} 

\newtheorem{thm}{Theorem}[section]
\newtheorem{lem}[thm]{Lemma}
\newtheorem{prop}[thm]{Proposition}
\newtheorem{cor}[thm]{Corollary}

\newtheorem{rem}[thm]{Remark}
\newtheorem*{conf*}{Property}

\numberwithin{equation}{section}

\newcommand{\EE}{\mathbb{E}} 
\newcommand{\PP}{\mathbb{P}} 

\usepackage{comment}
\excludecomment{toexclude}
\usepackage{comment}

\usepackage{hyperref}
\hypersetup{
    colorlinks=true, 
    linktoc=all,     
    citecolor=green, 
    linkcolor=blue,  
}
\usepackage{bm} 
\usepackage{mathabx} 
\newcommand{\bh}[1]{\widehat{#1}} 
\newcommand{\bc}[1]{\widecheck{#1}} 

\title{Joint invariance principles for random walks with positively and negatively reinforced steps}
\author{Marco Bertenghi\footnote{marco.bertenghi@math.uzh.ch} \ and Alejandro Rosales-Ortiz\footnote{alejandro.rosalesortiz@math.uzh.ch}}
\date{Institute of Mathematics, University of Zurich}

\begin{document}

\maketitle

\abstract{ {\footnotesize Given a random walk $(S_n)$ with typical step distributed according to      some fixed law and a fixed parameter $p \in (0,1)$,  
    the associated positively step-reinforced random walk is a discrete-time process which performs at each step, with probability $1-p$, the same step as $(S_n)$  while with probability $p$, it repeats one of the steps it performed previously  chosen uniformly at random. The negatively step-reinforced random walk follows the same dynamics but when a step is repeated its sign is also changed. In this work, we shall prove functional limit theorems for the triplet of a random walk, coupled with its positive and negative reinforced versions when $p < 1/2$ and when the typical step is centred. The limiting process is Gaussian and admits a simple representation in terms of stochastic integrals, 
    \begin{equation*}
    \left( B_t , \,  t^p \int_0^t s^{-p} \mathrm{d}\beta^r_s , \,  t^{-p} \int_0^t s^{p} \mathrm{d}\beta^c_s \right)_{t \in \mathbb{R}^+}
    \end{equation*}
    for properly correlated Brownian motions $B, \beta^r$, $\beta^c$. The processes in the second and third coordinate are called the noise reinforced Brownian motion (as named in \cite{BertoinUniversality}), and the noise counterbalanced Brownian motion of $B$. Different couplings are also considered, allowing us in some cases to drop the centredness hypothesis and to completely identify for all regimes $p \in (0,1)$ the limiting behaviour of step reinforced random walks. Our method exhausts a martingale approach in conjunction with the martingale functional CLT.}
}
\begin{toexclude}
\abstract{In this article we consider two different types of reinforcement algorithms for random walks, positive and negative reinforcement, which fall in the framework of random walks with memory. We establish invariance principles for both the positive step-reinforced random walks $\hat{S}$ as well as the negative step-reinforced random walks $\check{S}$. For the former, we give an alternative proof of the invariance principle in the diffusive regime $p \in (0,1/2)$ due to Bertoin \cite{BertoinUniversality} and also establish a novel invariance principle for the critical regime $p=1/2$. The latter exhibits no phase transition and it always behaves diffusively. Our method exhausts a martingale approach in conjunction with the martingale functional CLT. The laws of the  limiting processes obtained can always be expressed in terms of a member of the one parameter family of stochastic integrals  
\begin{equation} \label{formula:limitProcesses}
    \left( t^{-q} \int_0^t r^{q} \mathrm{d}B_s\right)_{t \in \mathbb{R}^+} \quad \quad \text{ for } q \in (-1/2 ,1]
\end{equation}
where $B=(B_t)_{t  \geq 0 }$ stands for a standard Brownian motion, and the bijection only takes place if we exclude $q=0$. The limits of the positive step-reinforced random walks correspond to stochastic integrals (\ref{formula:limitProcesses}) for $q\in (-1/2,0]$ while for negative step-reinforced random walks, $q \in [0,1]$. The case $q=0$, i.e. when the limit is just a Brownian motion, corresponds to both $p=1/2$ for the positive step reinforced case and $p=1$ for the negative step reinforced case, the latter being just Donsker's invariance principle. \medskip \\
}
\end{toexclude}

\section{Introduction}
In short, the purpose of this work is to establish invariance principles for \textit{random walks with step reinforcement},  a particular class of random walks with memory that has been of increasing interest in recent years. Historically,  the so-called \textit{elephant random
walk} (ERW) has been an  important and fundamental example of a step-reinforced random walk that was originally introduced in the physics literature by Schütz and Trimper \cite{SchuetzTrimper} more than  15 years ago. We shall first recall the setting of the ERW in order to motivate the two types of reinforcement that we will work with.

The ERW is a one-dimensional discrete-time nearest neighbour random walk with infinite memory, in allusion to the traditional saying that \textit{an elephant never forgets}. It can be depicted as follows: Fix some $q \in (0,1)$, commonly referred to as the \textit{memory parameter}, and suppose that an elephant makes an initial step in $\{-1,1\}$ at time $1$. After, at each time $n \geq 2$, the elephant selects uniformly at random a step from its past; with probability $q$, the elephant repeats the remembered step, whereas with complementary probability $1-q$ it makes a step in the opposite direction. Notably, in the  case $q= 1/2$, the elephant merely follows the path of a simple symmetric random walk. Notably, the ERW is a time-inhomogeneous Markov chain (although some works in the literature improperly assert its non-Markovian character). The ERW has generated a lot of interest in recent years, a non-exhaustive list of references (with further references therein) is  \cite{BaurBertoin}, \cite{Bercu}, \cite{BercuLaulinCenter}, \cite{Bertenghi}, \cite{GavaSchuetzColetti}, \cite{Coletti2017}, \cite{Coletti2019}, \cite{GuevaraERW},  \cite{KubotaTakei}, \cite{Kuersten}, see also \cite{BaurClass}, \cite{GonzalesMult} for variations.
A striking feature that has been pointed at in those works, is that the long-time behaviour of the ERW exhibits a phase transition at some critical memory parameter. Functional limit theorems for the ERW were already proved by Baur and Bertoin in \cite{BaurBertoin}  by means of limit theorems for  random urns. Indeed, the key observation is that the dynamics of the ERW can be expressed in terms of Pólya-type urn experiments and fall in the framework of the work of Janson \cite{JansonLimitBranchingProcesses}. For a strong invariance principle  for the ERW, we refer to Coletti, Gava and Schütz in \cite{Coletti2017}. 

The  framework of the ERW is however limited to the case of Rademacher distributed steps, and it is natural to look for generalisation of its dynamics that allow the typical step to have an arbitrary distribution  on $\mathbb{R}$. In this work, we aim to study the more general framework of \textit{step-reinforced random walks}.  We shall discuss two such generalisations, called positive and negative step-reinforced random walks, the former generalising the ERW when  $q \in (1/2, 1)$ while the later covers the spectrum $q \in [0,1/2]$, in both cases when the typical step is Rademacher distributed. We start by introducing the former. For the rest of the work, $X$ stands for a random variable  that we assume belongs to $L^2(\mathbb{P})$, we denote by $\sigma^2$ its variance and by $\mu$ its law. Moreover, unless specified otherwise,  $(S_n)$ will always denote a random walk with typical step distributed as $\mu$.  \par 
\textbf{The noise reinforced random walk:} A\textit{ (positive) step-reinforced random walk} or \textit{noise reinforced random walk} is a generalisation of the ERW, where the distribution of a typical step of the walk is allowed to have an arbitrary distribution on $\mathbb{R}$,  rather than just Rademacher. The impact of the reinforcement is still described in terms of a fixed parameter $p \in (0,1)$, that we also refer to as the \textit{memory parameter} or the \textit{reinforcement parameter}. We will work with different values of $p$ but for readability purposes  $p$ does not explicitly appear in the notation or terminology used in this work.
\par   Vaguely speaking, the dynamics are as follows: at each discrete time, with probability $p$ a step reinforced random walk repeats one of its preceding steps chosen uniformly at random, and otherwise, with complementary probability $1-p$, it has an independent increment with a fixed but arbitrary distribution. More precisely, given an underlying probability space $( \Omega, \mathcal{F}, \PP)$ and a sequence ${X}_1, {X}_2, \dots $ of i.i.d. copies of the random variable ${X}$  with law $\mu$,  we define ${\hat{X}}_1, {\hat{X}}_2, \dots$ recursively as follows: First, let $( \varepsilon_i : i \geq 2)$ be an independent sequence of Bernoulli random variables with parameter $p \in (0,1)$ and also consider  $(U[i] : i \geq 2)$ an independent sequence where each $U[i]$ is  uniformly distributed on $\{1, \dots , i \}$.   We set first ${\hat{X}}_1= {X}_1$, and next for $i \geq 2$, we let 
\begin{align*}
{\hat{X}}_i = 
\begin{cases} {X}_i, & \text{if } \varepsilon_i=0, \\
\hat{X}_{U[i-1]} , & \text{if } \varepsilon_i=1. 
\end{cases}
\end{align*}
 Finally, the sequence of the partial sums 
\begin{align*}
{\hat{S}}_n:= {\hat{X}}_1 + \dots + {\hat{X}}_n, \quad n \in \mathbb{N},
\end{align*}
is referred to as a \textit{positive step-reinforced random walk}. From the algorithm, we have for any bounded measurable $f: \mathbb{R}\mapsto \mathbb{R}^+$, 
\begin{equation*}
    \EE(f(\hat{X}_{n+1})) =  (1-p) \EE(f({X}_{n+1})) + \frac{p}{n} \sum_{j=1}^n \EE(f(\hat{X}_j))  
\end{equation*}
and it follows by induction that each $\hat{X}_n$  has law  $\mu$. Beware however that the sequence $(\hat{X}_i)$ is not stationary. Notice that if $(\hat{S}_n)$ is not centred, it is often fruitful to reduce our analysis to the centred case by considering $(\hat{S}_n - n \EE(X))$, which is a centred noise reinforced random walk with typical step distributed as $X - \EE(X)$. Observe that in the degenerate case $p=1$, the dynamics of the positive step-reinforced random walk become essentially deterministic. Indeed when $p=1$ we have $\hat{S}_n=nX_1$ for all $n \geq 1$, in particular the only remaining randomness for this process stems from the random variable $X_1$.

In this setting, when $\mu$ is the Rademacher distribution, Kürsten \cite{Kuersten} (see also \cite{Gonzales}) pointed out that $\hat{S}= ( \hat{S}_n)_{n \geq 1}$ is a version of the elephant random walk with memory parameter $q=(p+1)/2 \in (1/2,1)$ in the present notation. The remaining range of the memory parameter can be obtained by a simple modification that we will address when we introduce random walks with \textit{negatively reinforced steps}. When $\mu$ has a symmetric stable distribution, $\hat{S}$
is the so-called \textit{shark random swim} which has been studied in depth by Businger \cite{Businger}. More general versions when the distribution $\mu$ is infinitely divisible have been considered by Bertoin in \cite{BertoinNoise}, and we will briefly comment on this setting in a moment. Finally,  when we replace the sequence of Bernoulli  random variables $(\epsilon_n)$ by a deterministic sequence $(r_n)$ with $r_n \in \{ 0,1 \}$, the scaling exponents of the corresponding step  reinforced random walks have been  studied by Bertoin in \cite{BertoinScalingExponents}.

In stark contrast to the ERW, the literature available on general step-reinforced random walks remains rather sparse. Quite recently, Bertoin \cite{BertoinUniversality} established an invariance principle for the step-reinforced random walk in the diffusive regime $p \in (0,1/2)$. Bertoin's work concerned a rather simple real-valued and centered Gaussian process $\hat{B}=( \hat{B}(t))_{t \geq 0}$ with covariance function given by
\begin{align} \label{Processes:NoiseReinforcedBM}
\EE \left( \hat{B}(t) \hat{B}(s) \right) = \frac{t^ps^{1-p}}{1-2p} \quad \text{for } 0 \leq s \leq t \quad \text{and } p \in (0,1/2).
\end{align}
This process has notably appeared as the scaling limit for diffusive regimes of the ERW and other  Polya urn related processes, see \cite{BaurBertoin, GavaSchuetzColetti}, \cite{Bertenghi} for higher dimensional generalisations, and \cite{GaussianAproximation_BaiHuZhang}. In \cite{BertoinUniversality} the process displayed in (\ref{Processes:NoiseReinforcedBM}) is referred to as a \textit{noise reinforced Brownian motion} and   belongs to a larger class of reinforced processes recently introduced by Bertoin in \cite{BertoinNoise} called \textit{noise reinforced Lévy processes}. The noise reinforced Brownian motion  plays, in the framework of noise reinforced Lévy processes,  the same role as  the standard Brownian motion in the context of Lévy processes. Moreover, just as the standard Brownian motion $B$ corresponds to the integral of a white noise, $\hat{B}$ can be thought of as the integral of a reinforced version of the white noise, hence the name. More precisely, from (\ref{Processes:NoiseReinforcedBM}) it readily follows that the law of $\hat{B}$ admits the following integral representation 
\begin{align*}
    \hat{B}(t)=t^p \int_0^t s^{-p} \mathrm{d}\beta^r_s, \quad t \geq 0,
\end{align*}
where $\beta^r=(\beta^r_s)_{ s \geq 0}$ is a standard Brownian motion, or equivalently, $\hat{B}=( \hat{B}(t))_{t \geq 0}$ has the same law as
\begin{align*}
    \left( \frac{t^p}{\sqrt{1-2p}}B(t^{1-2p})  \right)_{t \geq 0}. 
\end{align*}
Some further  properties of the noise reinforced Brownian motion can be found in  \cite{BertoinUniversality}, where the following functional limit theorem  \cite[Theorem 3.3]{BertoinUniversality} has been established: let  $p \in (0,1/2)$  and suppose that $X \in L^2(\PP)$. Then, we have the weak convergence of the scaled sequence in the sense of Skorokhod as $n$ tends to infinity
\begin{equation} \label{theorem:InvarianceDiffusiveRegime}
   \left(  \frac{\hat{S} (\lfloor nt \rfloor)-nt \EE(X)}{\sqrt{\sigma^2 n}}  \right)_{t \in \mathbb{R}^+} \Longrightarrow ( \hat{B}_t )_{t \in \mathbb{R}^+}
\end{equation}
where $(\hat{B}_t)_{t \geq 0}$ is a noise reinforced Brownian motion.\par 
Our work generalises this result but our approach differs from \cite{BertoinUniversality} as we work with a discrete martingale introduced by Bercu \cite{Bercu} for the ERW and later generalised in \cite{BertenghiAsymptotic} for step-reinforced random walks. The martingale we work with is a discrete-time stochastic process of the form $\bh{a}_n \hat{S}_n$, where $(\bh{a}_n)_{n \geq 0}$ is a properly defined sequence of positive real numbers of order $n^{-p}$. As we shall see, investigation of said martingale and in particular its quadratic variation process,  in conjunction with the functional martingale CLT \cite{WhittFCLT}, yields an alternative proof of Theorem 3.3 in \cite{BertoinUniversality}.

\textbf{The counterbalanced random walk:} Next we turn our attention to the second process of interest, called the \textit{counterbalanced random walk} or \textit{negative step-reinforced random walk}, introduced recently by Bertoin in \cite{BertoinCounterbalancing}. Beware that  $p$ in our work  always corresponds to the probability of a repetition event, while in \cite{BertoinCounterbalancing} this happens with probability $1-p$.  Similarly, we will consider a sequence of i.i.d. random variables $(X_n)_{n \in \mathbb{N}}$ with distribution $\mu$ on $\mathbb{R}$ and at each time step,  the step performed by the walker will be, with probability $1-p \in (0,1)$,  an independent step $X_n$ from the previous ones  while with complementary probability $p$, the new step is one of the previously performed steps,  chosen uniformly at random, with its sign changed. This last action will be referred to as a \textit{counterbalance} of the uniformly chosen step.  In particular, when $\mu$ is the Rademacher distribution, we obtain an ERW with parameter $(1-p)/2 \in [0,1/2]$.\par 
Formally,  recall that  ${X}_1, {X}_2, \dots $ is a sequence of i.i.d. copies of  ${X}$ and $( \varepsilon_i : i \geq 2)$ is an independent sequence of Bernoulli random variables with parameter $p \in (0,1)$.  We define the sequence of increments ${\check{X}}_1, {\check{X}}_2, \dots$ recursively as follows (beware of the difference of notation between $\hat{X}$ and $\check{X}$): we set first ${\check{X}}_1= {X}_1$, and next for $i \geq 2$, we let 
\begin{align*}
{\check{X}}_i = \begin{cases} {X}_i, & \text{if } \varepsilon_i=0, \\
-\check{X}_{U[i-1]} & \text{if } \varepsilon_i=1 \end{cases},
\end{align*}
where $U[i-1]$ denotes an independent uniform random variable in $\{ 1, \dots , i-1\}$. Finally, the sequence of partial sums 
\begin{align*}
{\check{S}}_n:= {\check{X}}_1 + \dots + {\check{X}}_n, \quad n \in \mathbb{N},
\end{align*}
is referred to as a \textit{counterbalanced random walk (or random walk with negatively reinforced steps)}. Notice also that, in contrast with the positive step-reinforced random walk, when $p=1$ we still get a stochastic process, consisting of consecutive counterbalancing of the initial step $X_1$ while for $p=0$ we just get the dynamics of a random walk. 
 For the positive reinforced random walk we already pointed out that the steps are identically distributed and hence are centred as soon as $X$ is centred. On the other hand, for the negatively step-reinforced case  the recurrent equation on page 3 of \cite{BertoinCounterbalancing} 
\begin{equation*}
    \EE \left( {\check{S}_{n+1}} \right) = (1-p) m + (1-p/n) \EE \left({\check{S}_n}\right),   \quad \quad n \geq 1
\end{equation*}
with initial condition $\EE ({\check{S}_1})=\EE(\check{X}_1) = m$, yields that the process $(\check{S}_n)$ is also centered if $X$ is centred. Observe  however that in stark contrast to the positive step-reinforced random walk, we cannot say that the typical step is centered without loss of generality: Indeed, since $n \mapsto \EE(\check{X}_n)$ is no longer constant as soon as $m \neq 0$, due to the random swap of signs in the negative reinforcement algorithm, the centered process $(\check{S}_n-\EE(\check{S}_n))$ is also no longer a counterbalanced random walk.\par
Turning our attention to its asymptotic behaviour, Proposition 1.1 in  \cite{BertoinCounterbalancing} shows that the behaviour of the counterbalanced random walk $\check{S}_n$ is ballistic. More precisely, denoting by $m=\EE(X)$  the  mean of the typical step $X$, then for all $p \in [0,1]$ the process $(\check{S}_n)$ satisfies a law of large numbers:
\begin{equation*}
    \lim_{n \rightarrow \infty} \frac{\check{S}_n}{n} = \frac{(1-p)m}{1+p} \quad \text{in probability.}
\end{equation*}
Moreover, Theorem 1.2 in \cite{BertoinCounterbalancing} shows that if we also assume that the second moment $m_2 = \EE(X^2)$ is finite, then the fluctuations are Gaussian for all choices $p \in [0,1)$: 
\begin{equation*}
    \frac{\check{S}_n - \frac{1-p}{1+p}mn}{\sqrt{n}} \implies \mathcal{N} \left( 0 , \frac{m_2- \left( \frac{1-p}{1+p}m \right)^2}{1+2p} \right).
\end{equation*}
In particular, when $X$ is centred as will be our case, we simply get 
\begin{equation*}
    \frac{\check{S}_n}{\sqrt{\sigma^2  n}} \implies \mathcal{N} \left( 0 , {(1+2p)^{-1}} \right).
\end{equation*}
On the other hand, when $p =1$ which corresponds to the purely counterbalanced case, and under the additional assumption that $X$ follows the Rademacher distribution, then 
\begin{equation*}
    \frac{1}{\sqrt{n}} \check{S}_n \Longrightarrow \mathcal{N}(0,1/3). 
\end{equation*}
The proofs of these results rely on remarkable connections with random recursive trees and even if these will  not be needed  in the present work, we encourage the interested reader to consult \cite{BertoinCounterbalancing} for more details. In this article, we will establish a functional version of the asymptotic normality mentioned above under the additional assumption that $m=0$, i.e. the typical step is centered. We recall that this assumption cannot be made without the loss of generality. \par
In the same spirit as in the noise-reinforced setting, we will call a \textit{noise counterbalanced Brownian motion of parameter}  $p \in [0,1)$ a Gaussian process $\check{B}$  with covariance given by 
\begin{align} \label{Processes:CounterbalancedBM}
\EE \left( \check{B}(t) \check{B}(s) \right) = \frac{1}{2p+1} \frac{s^{1+p}}{t^p} \quad \text{for } 0 \leq s \leq t \quad \text{and } p \in (0,1), 
\end{align}
and it follows that the law of $\check{B}$ admits the following integral representation 
\begin{equation} \label{definition:counterbalancedBrownianMotion}
    \check{B}_t  = t^{-p} \int_0^t s^{p} \mathrm{d}\beta^c _s, \quad \quad t \geq 0
\end{equation}
in terms of a standard Brownian motion $\beta^c$. 
\par   
\textbf{The invariance principles: }Before stating the functional versions of the results we just mentioned, notice  that given a sample of i.i.d. random variables $(X_n)$ with law $\mu$, and an additional independent collection $(\epsilon_i)$, $(U[i])$ of Bernoulli random variables and uniform random variables respectively as before, we can  construct from the same sample simultaneously to the associated random walk $(S_n)$, the processes  $(\hat{S}_n)$ and $(\check{S}_n)$, that we refer respectively as  \textit{the} positive step-reinforced version and \textit{the} negative step-reinforced version of $({S}_n)$. It is then natural to compare the dynamics of the triplet  $(S_n, \hat{S}_n, \check{S}_n)$, instead of individually working with $(\hat{S}_n)$ and $(\check{S}_n)$. When considering such a triplet, it will always be implicitly assumed that $(\hat{S}_n), (\check{S}_n)$ have been constructed in this particular way from $(S_n)$.  In particular, we used the same sequence of uniform and Bernoulli random variables to define both reinforced versions.  Now we have all the ingredients to state our first main result: 
\begin{thm}\label{thm:ConvergenceTripletDifusive} Fix $p \in [0,1/2)$ and consider the triplet $(S_n, \hat{S}_n, \check{S}_n)$ consisting of the  random walk $(S_n)$ with its reinforced version and its counterbalanced version of parameter $p$.  Assume further that $X$ is centred. Then, the following weak convergence holds in the sense of Skorokhod as $n$ tends to infinity,
\begin{equation} \label{equation:ConvergenceTripletDifusive}
    \left( \frac{1}{\sigma \sqrt{n}} {S}_{\lfloor nt \rfloor}, \frac{1}{\sigma \sqrt{n}} \hat{S}_{\lfloor nt \rfloor}  , \frac{1}{\sigma \sqrt{n}} \check{S}_{\lfloor nt \rfloor}  \right)_{t \in \mathbb{R}^+} 
    \implies   
    \left( B_t , \hat{B}_t , \check{B}_t \right)_{t \in \mathbb{R}^+}
\end{equation}
where $B$, $\hat{B}$, $\check{B}$ denote respectively a standard BM, a noise reinforced BM and a counterbalanced BM with covariances, $\EE(B_s \check{B}_t) = t^{-p}(t \wedge s)^{p+1}(1-p)/(1+p)$, $\EE( B_s \hat{B}_t) = t^p (t \wedge s)^{1-p}$, $\EE( \hat{B}_t \check{B}_s ) = t^{p}s^{-p} (t \wedge s) (1-p)/(1+p)$.
\end{thm}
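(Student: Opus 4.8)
The plan is to realise all three coordinates through a single vector-valued martingale and then invoke the multidimensional martingale functional CLT \cite{WhittFCLT}. Work with the filtration $\mathcal{F}_n = \sigma(X_1,\dots,X_n,\varepsilon_2,\dots,\varepsilon_n, U[1],\dots,U[n-1])$, with respect to which $S_n$ is already a centred martingale. Following the generalisation of Bercu's martingale in \cite{BertenghiAsymptotic}, introduce the deterministic weights $\hat a_n = \prod_{k=1}^{n-1} k/(k+p)$ and $\check a_n = \prod_{k=1}^{n-1} k/(k-p)$, so that $\hat a_n \sim \Gamma(1+p) n^{-p}$ and $\check a_n \sim \Gamma(1-p) n^{p}$ by standard Gamma asymptotics, and set $\hat M_n = \hat a_n \hat S_n$ and $\check M_n = \check a_n \check S_n$. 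Using $\EE(\hat X_k \mid \mathcal{F}_{k-1}) = \tfrac{p}{k-1}\hat S_{k-1}$ and $\EE(\check X_k \mid \mathcal{F}_{k-1}) = -\tfrac{p}{k-1}\check S_{k-1}$ (valid since $X$ is centred), both $\hat M_n$ and $\check M_n$ are $(\mathcal{F}_n)$-martingales with increments
\[ \Delta \hat M_k = \hat a_k\Big(\hat X_k - \tfrac{p}{k-1}\hat S_{k-1}\Big), \qquad \Delta \check M_k = \check a_k\Big(\check X_k + \tfrac{p}{k-1}\check S_{k-1}\Big). \]
I would prove a joint invariance principle for $(S_{\lfloor n\cdot\rfloor}, \hat M_{\lfloor n\cdot\rfloor}, \check M_{\lfloor n\cdot\rfloor})$ under the diagonal normalisation by $(\sigma\sqrt n)^{-1}$, $(\sigma\Gamma(1+p) n^{1/2-p})^{-1}$ and $(\sigma\Gamma(1-p) n^{1/2+p})^{-1}$, and then recover $(S,\hat S,\check S)$ by multiplying the last two coordinates by the deterministic rescalings $t^{p}$ and $t^{-p}$, since $\hat S_{\lfloor nt\rfloor} = \hat a_{\lfloor nt\rfloor}^{-1}\hat M_{\lfloor nt\rfloor}$ and $\check S_{\lfloor nt\rfloor} = \check a_{\lfloor nt\rfloor}^{-1}\check M_{\lfloor nt\rfloor}$.

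The core of the argument is the computation of the $3\times 3$ predictable covariation matrix of these increments and the verification that, suitably scaled, it converges in probability to a deterministic continuous limit. Two of the off-diagonal entries are immediate: because $X_k$ is independent of $\mathcal{F}_{k-1}$ and centred, the drift contributions drop out and $\EE(X_k\,\Delta\hat M_k \mid \mathcal{F}_{k-1}) = (1-p)\sigma^2 \hat a_k$, $\EE(X_k\,\Delta\check M_k \mid \mathcal{F}_{k-1}) = (1-p)\sigma^2 \check a_k$, so $\langle S,\hat M\rangle_n$ and $\langle S,\check M\rangle_n$ are purely deterministic sums with the expected $n^{1-p}$ and $n^{1+p}$ growth. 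The three remaining entries involve the quadratic energies $\hat V_n = \sum_{j\le n}\hat X_j^2$, $\check V_n = \sum_{j\le n}\check X_j^2$ and, crucially, the cross-energy $W_n = \sum_{j\le n}\hat X_j\check X_j$. The correlation between the reinforced and counterbalanced walks is produced entirely by the shared Bernoulli and uniform variables, and is captured by the identity
\[ \EE(\hat X_k\check X_k \mid \mathcal{F}_{k-1}) = (1-p)\sigma^2 - \tfrac{p}{k-1}W_{k-1}, \]
which reflects that with probability $1-p$ one has $\hat X_k = \check X_k = X_k$, while with probability $p$ one has $\hat X_k = \hat X_{U[k-1]}$ and $\check X_k = -\check X_{U[k-1]}$.

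The main obstacle is the law of large numbers for these energies under the sole hypothesis $X\in L^2$. For the self-energies one has $\EE(\hat X_j^2) = \EE(\check X_j^2) = \sigma^2$, and the concentration $\hat V_n/n, \check V_n/n \to \sigma^2$ in probability may be taken from the single-walk analyses of \cite{BertoinUniversality, BertenghiAsymptotic} (or re-derived by truncating $X$ at a level $A$, where fourth moments are available, and then letting $A\to\infty$). For the cross-energy I would set up the linear recursion $\bar W_n := \EE(W_n)$, $\bar W_n = (1-p)\sigma^2 + \bar W_{n-1}(n-1-p)/(n-1)$, whose solution satisfies $\bar W_n/n \to (1-p)\sigma^2/(1+p)$, and then upgrade the mean to convergence in probability $W_n/n \to (1-p)\sigma^2/(1+p)$ by the same truncation device combined with the self-energy bounds. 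Granting these, a Toeplitz/Cesàro summation shows that the rescaled covariation entries converge in probability to the deterministic functions $t$, $\tfrac{t^{1-2p}}{1-2p}$, $\tfrac{t^{1+2p}}{1+2p}$ (diagonal) and $t^{1-p}$, $\tfrac{(1-p)}{1+p}t^{1+p}$, $\tfrac{(1-p)}{1+p}t$ (off-diagonal); the conditional Lindeberg condition is routine since every scaled increment is $O(n^{-1/2})$ times an $L^2$ variable. The martingale FCLT then delivers convergence to the continuous centred Gaussian martingale $\big(B_t, \int_0^t s^{-p}\mathrm{d}\beta^r_s, \int_0^t s^{p}\mathrm{d}\beta^c_s\big)$, whose covariation matrix fixes the mutual correlations of the driving Brownian motions $B,\beta^r,\beta^c$.

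Finally I would transfer this to the triplet via the continuous mapping theorem applied to $(x_0,x_1,x_2)\mapsto (x_0, t^{p}x_1, t^{-p}x_2)$. Multiplication by the continuous factor $t^{p}$ (which vanishes at $0$) is unproblematic and yields $\hat S_{\lfloor nt\rfloor}/(\sigma\sqrt n) \Rightarrow t^{p}\int_0^t s^{-p}\mathrm{d}\beta^r_s = \hat B_t$. The factor $t^{-p}$ is singular at the origin, so I would first establish convergence in $D([\delta,\infty))$ for each $\delta>0$, where $t^{-p}$ is continuous and bounded, and then patch in a neighbourhood of $0$ using a small-time tightness estimate from Doob's maximal inequality together with the second-moment bound $\EE(\check S_m^2) = O(m)$ of \cite{BertoinCounterbalancing}, controlling $\sup_{t\le\delta}|\check S_{\lfloor nt\rfloor}|/(\sigma\sqrt n)$. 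Reading off the covariances of the limiting Gaussian martingale through the representations $\hat B_t = t^p\int_0^t s^{-p}\mathrm{d}\beta^r_s$ and $\check B_t = t^{-p}\int_0^t s^p\mathrm{d}\beta^c_s$ then reproduces exactly the three cross-covariances in the statement.
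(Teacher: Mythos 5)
Your architecture coincides with the paper's: the same weighted martingales $\hat a_n\hat S_n$ and $\check a_n\check S_n$, the MFCLT applied to the rescaled triplet, the identification of the cross-energy $W_n=\sum_{j\le n}\hat X_j\check X_j$ --- which is precisely a counterbalanced random walk with typical step $X^2$ --- as the source of the correlation $(1-p)/(1+p)$, and the two-stage treatment of the singular factor $t^{-p}$. Two of your steps, however, do not work as written. The more serious one is the small-time patch for the third coordinate. You propose to control $\sup_{t\le\delta}|\check S_{\lfloor nt\rfloor}|/\sqrt n$ by ``Doob's maximal inequality together with $\EE(\check S_m^2)=O(m)$''. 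But $\check S$ is not a martingale (indeed $\EE(\check S_{n+1}\mid\mathcal F_n)=\frac{n-p}{n}\check S_n$), so Doob does not apply to it directly; and applying Doob to the martingale $\check M_n=\check a_n\check S_n$ over the whole window $k\le n\delta$ gives $\EE\bigl(\sup_{k\le n\delta}\check M_k^2\bigr)\le C(n\delta)^{1+2p}$, which after dividing by $\check a_1=1$ yields a bound of order $\delta^{1+2p}n^{2p}\epsilon^{-2}$ that blows up with $n$. The paper repairs exactly this by a dyadic decomposition of $[0,\delta]$, applying Doob to $\check M$ on each block $[2^{-(i+1)}\delta,2^{-i}\delta]$ and dividing by $\check a_{\lfloor 2^{-(i+1)}\delta n\rfloor}\sim(2^{-i}\delta n)^{p}$ rather than by $\check a_1$, so that the $n^{2p}$ factors cancel and the blocks sum to $O(\delta)$. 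You need this blocking (or an equivalent weighted maximal inequality); the one-line version fails.

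The second gap is the concentration $W_n/n\to(1-p)\sigma^2/(1+p)$ in probability. Your mean recursion correctly identifies $\lim\EE(W_n)/n$, but ``upgrade \dots by the same truncation device combined with the self-energy bounds'' is not an argument: the self-energies bound $\EE(\hat X_j^2)$, not the fluctuations of $W_n$, and some second-moment or martingale computation for $W_n$ itself is required. The clean route --- the one the paper effectively takes --- is to note that $(W_n)$ satisfies the counterbalancing recursion with typical step $X^2$ and to invoke the law of large numbers for counterbalanced walks (Proposition 1.1 and Lemma 4.1 of \cite{BertoinCounterbalancing}); under a preliminary truncation the step is bounded and this applies verbatim. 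Finally, a smaller remark: your plan to verify a conditional Lindeberg condition directly for $X\in L^2$ is a legitimate alternative to the paper's bounded-then-truncate scheme, but it is not compatible with the version of the MFCLT quoted in the paper (which assumes uniformly bounded jumps); you would need to cite a Lindeberg-type version, and the in-probability laws of large numbers entering the quadratic-variation limits also need to be established without boundedness, which is where the paper's truncation is actually doing its work.
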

Notice that in the case $p=0$, i.e. when no reinforcement events occur, this is just Donsker's invariance principle since $(\check{S}_n), (\hat{S}_n)$ are just the random walk $(S_n)$ and $\hat{B}$, $\check{B}$ are just $B$ and hence, from now on we will assume that $p >0$. The process in the limit admits the following simple integral representation in terms of stochastic integrals 
\begin{equation} \label{equation:integralRepTriplet}
    \left( B_t , \,  t^p \int_0^t s^{-p} \mathrm{d}\beta^r_s , \,  t^{-p} \int_0^t s^{p} \mathrm{d}\beta^c_s \right)_{t \in \mathbb{R}^+}
\end{equation}
where $B=(B_t)_{t \geq 0}$,  $\beta^r = (\beta_t)_{t \geq 0}$, $\beta^c = (\beta_c)_{t \geq 0}$ denote three standard Brownian motions with  covariance structure  $ \EE(  B_s \beta_t^r)  = (1-p) (t \wedge s)$, $\EE(  B_s \beta_t^c)  = (1-p)(t \wedge s)$, $\EE(  \beta_s^r \beta_t^c)  = (t \wedge s)(1-p)/(1+p)$.
\par The restriction on the parameter $p \in (0,1/2)$ comes from the fact that, as we will see, for the noise reinforced random walk only for such parameter the functional version works with this scaling, while the centred hypothesis is a restriction coming from the counterbalanced random walk.   Now we point at some variants with less restrictive hypothesis, holding as long as we no longer consider the triplet. This allows us to drop some of the conditions we just mentioned, and the proofs will be embedded in the proof of Theorem \ref{thm:ConvergenceTripletDifusive}. We start by removing the centred hypothesis when only working with the the pair $(S_n, \hat{S}_n)$ in the diffusive regime $p \in [0,1/2)$.   
\begin{thm}
\label{theorem:InvarianceDiffusiveRegime}
 Let  $p \in [0,1/2)$  and suppose that $X \in L^2(\PP)$. Let  $(S_n)$ be a random walk with typical step distributed as $X$ and denote by $(\hat{S}_n)$ its positive step  reinforced version. Then, we have  weak joint convergence of the scaled sequence in the sense of Skorokhod as $n$ tends to infinity towards a Gaussian process 
\begin{equation}  \label{theorem:jointConvergence1}
   \left(  \frac{{S} (\lfloor nt \rfloor)-nt \EE(X)}{\sigma \sqrt{ n}} ,  \frac{\hat{S} (\lfloor nt \rfloor)-nt \EE(X)}{\sigma \sqrt{ n}}\right)_{t \in \mathbb{R}^+} \Longrightarrow (B_t, \hat{B}_t)_{t \in \mathbb{R}^+}
\end{equation}
where $B$ is a Brownian motion, $\hat{B}$ is a noise reinforced Brownian motion with  covariance   $\EE[B_s \hat{B}_t] = t^p (t \wedge s)^{1-p}$. 
\end{thm}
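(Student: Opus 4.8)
The plan is to reduce to the centred case and then realise the pair as a two-dimensional martingale to which the multidimensional martingale functional CLT applies. Since each $\hat{X}_n$ has law $\mu$, both $S_n$ and $\hat{S}_n$ have mean $n\EE(X)$, so subtracting $nt\,\EE(X)$ (which differs from $\lfloor nt\rfloor\EE(X)$ by a term of order $n^{-1/2}$) reduces us to the situation where $X$ is centred, replacing $X$ by $X-\EE(X)$; note that the positive reinforcement dynamics commute with this centring. Assume henceforth $\EE(X)=0$. From the algorithm one computes $\EE[\hat{X}_{n+1}\mid\mathcal{F}_n]=\frac{p}{n}\hat{S}_n$, whence $\EE[\hat{S}_{n+1}\mid\mathcal{F}_n]=\frac{n+p}{n}\hat{S}_n$. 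Defining $\hat{a}_n=\prod_{k=1}^{n-1}\frac{k}{k+p}$, so that $\hat{a}_n\sim\Gamma(1+p)\,n^{-p}$, the process $M_n:=\hat{a}_n\hat{S}_n$ is an $(\mathcal{F}_n)$-martingale, while $S_n$ is itself a martingale with i.i.d.\ centred increments.

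First I would apply the martingale FCLT to the bivariate martingale obtained by normalising $(S_{\lfloor nt\rfloor},M_{\lfloor nt\rfloor})$ by $\mathrm{diag}(\sigma\sqrt n,\ \sigma\Gamma(1+p)n^{1/2-p})$. The increments are $\Delta S_k=X_k$ and $\Delta M_k=\hat{a}_k(\hat{X}_k-\frac{p}{k-1}\hat{S}_{k-1})$, and the three predictable brackets follow from $\EE[\Delta S_k^2\mid\mathcal{F}_{k-1}]=\sigma^2$, from $\EE[\Delta M_k^2\mid\mathcal{F}_{k-1}]=\hat{a}_k^2\,\mathrm{Var}(\hat{X}_k\mid\mathcal{F}_{k-1})$, and from $\EE[\Delta S_k\Delta M_k\mid\mathcal{F}_{k-1}]=(1-p)\sigma^2\hat{a}_k$ (the cross term using $\EE[X_k\hat{X}_k\mid\mathcal{F}_{k-1}]=(1-p)\sigma^2$ and $\EE[X_k\mid\mathcal{F}_{k-1}]=0$). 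Summing and normalising, the covariation matrix converges, for each $t$, to the deterministic limit with entries $\langle B\rangle_t=t$, $\langle B,\mathcal{N}\rangle_t=t^{1-p}$ and $\langle\mathcal{N}\rangle_t=t^{1-2p}/(1-2p)$. Together with the conditional Lindeberg condition — which follows from $X\in L^2$ and the uniform negligibility of the normalised increments, each carrying a prefactor of order at most $n^{p-1/2}\to0$ over the range $k\le nt$ — this yields joint convergence towards $(B_t,\mathcal{N}_t)$, a continuous Gaussian martingale with the above brackets; here $\mathcal{N}_t=\int_0^t s^{-p}\,\mathrm{d}\beta^r_s$, consistent with $\EE[B_s\beta^r_t]=(1-p)(t\wedge s)$.

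It then remains to recover $\hat{S}$ from $M$. Writing $\hat{S}_{\lfloor nt\rfloor}/(\sigma\sqrt n)=g_n(t)\cdot M_{\lfloor nt\rfloor}/(\sigma\Gamma(1+p)n^{1/2-p})$ with $g_n(t)=\Gamma(1+p)n^{-p}/\hat{a}_{\lfloor nt\rfloor}\to t^p$ uniformly on compact subsets of $(0,\infty)$, a generalised continuous mapping argument (multiplication by a deterministic, locally uniformly convergent factor preserves weak convergence when the limit is continuous) gives $\hat{S}_{\lfloor nt\rfloor}/(\sigma\sqrt n)\Rightarrow t^p\mathcal{N}_t=\hat{B}_t$, jointly with $S_{\lfloor nt\rfloor}/(\sigma\sqrt n)\Rightarrow B_t$. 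Finally, $\hat{B}_t=t^p\mathcal{N}_t$ together with $\EE[B_s\mathcal{N}_t]=\langle B,\mathcal{N}\rangle_{s\wedge t}=(s\wedge t)^{1-p}$ for these continuous martingales yields $\EE[B_s\hat{B}_t]=t^p(t\wedge s)^{1-p}$, matching the stated covariance.

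The main obstacle is the convergence in probability of the predictable bracket $\langle M\rangle_{\lfloor nt\rfloor}$, which hinges on the law of large numbers $V_n/n\to\sigma^2$ for $V_n=\sum_{j=1}^n\hat{X}_j^2$ entering $\mathrm{Var}(\hat{X}_k\mid\mathcal{F}_{k-1})=(1-p)\sigma^2+\frac{p}{k-1}V_{k-1}-\frac{p^2}{(k-1)^2}\hat{S}_{k-1}^2$. This is tractable because $(\hat{X}_j^2)$ is itself a positively step-reinforced sequence with typical step $X^2$ (a repetition of $\hat{X}_i$ repeats $\hat{X}_i^2$), so $V_n$ is a reinforced random walk of mean-$\sigma^2$ steps and $V_n/n\to\sigma^2$ follows; the subtracted term is $O(\hat{S}_{k-1}^2/k^2)$ and is negligible after summation since $\EE[\hat{S}_{k}^2]=O(k)$. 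A secondary technical point is the uniformity of $g_n\to t^p$ near $t=0$, which is handled using the continuity of $\hat{B}$ at the origin together with tightness.
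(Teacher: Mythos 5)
Your proposal follows essentially the same route as the paper's for the core of the argument: the same martingale $\bh{a}_n\hat S_n$, the same three predictable brackets (including the cross bracket $(1-p)\sigma^2\bh{a}_k$ computed exactly as in Lemma \ref{lemma:quadvarconv}(iii)), the same device of viewing $\hat V_n=\sum_{j\le n}\hat X_j^2$ as a reinforced walk with typical step $X^2$ and centring it, and an application of the martingale FCLT followed by multiplication by the deterministic factor $t^p$. The one real divergence --- killing the term $p^2\bh{a}_k^2(\hat S_{k-1}/(k-1))^2$ via $\EE[\hat S_k^2]=O(k)$ in $L^1$ instead of via the almost sure law of large numbers of Lemma \ref{thm:LLNDiffusive} --- is legitimate and even slightly more economical, since the maximal inequality of Lemma \ref{lem:maximalIneq} does not need bounded steps.

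The gap is in the passage from bounded steps to general $X\in L^2(\PP)$, which the paper treats in a dedicated section (the truncation $X=X^{\le K}+X^{>K}$, the bounded-step result along a slowly growing sequence $K(n)$, Lemma \ref{lem:maximalIneq} and Lemma \ref{lemma:reductionLemma}), and which you compress into two clauses, neither of which holds as stated. First, your justification of the Lindeberg/jump-negligibility condition --- that each normalised increment ``carries a prefactor of order at most $n^{p-1/2}\to0$'' --- uses only $\bh{a}_k\le1$ and is not sufficient: under $X\in L^2$ one has $\max_{k\le n}|X_k|=o(n^{1/2})$ in probability but in general \emph{not} $o(n^{1/2-p})$, so $n^{p-1/2}\max_{k\le n}|\hat X_k|$ need not vanish; a direct argument must play the decay $\bh{a}_k\sim k^{-p}$ against the index $k$, or one must truncate (note also that the version of the MFCLT quoted in the paper requires uniformly bounded jumps, so some truncation is forced in any case). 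Second, and more seriously, your law of large numbers $\hat V_n/n\to\sigma^2$ is obtained by applying the same martingale machinery to the reinforced walk with typical step $X^2-\EE(X^2)$; but square-integrability of that martingale, and hence the whole argument, requires $X^2-\EE(X^2)\in L^2(\PP)$, i.e.\ $X\in L^4(\PP)$, which is not assumed. Under $X\in L^2$ alone this step needs its own truncation. Both issues are precisely what Section \ref{section:reductionArgument} of the paper is for, so as written your proof is complete only under the additional hypothesis that $X$ is bounded (or at least in $L^4$).
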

It follows that the limit process in (\ref{theorem:jointConvergence1}) admits the integral representation 
\begin{equation*}
    \left(  B_t ,  t^p \int_0^t s^{-p} \mathrm{d}\beta_s^r  \right)_{t \in \mathbb{R}^+}
\end{equation*}
where $B=(B_t)_{t \geq 0}$ and $\beta^r  = (\beta^r_t)_{t \geq 0}$ denote two standard Brownian motions with $ \EE ( B_t \beta_s^r)  = (1-p) (t\wedge s)$.  This result extends Theorem 3.3 in \cite{BertoinUniversality}  to the pair $(S , \hat{S})$. Notice that the factor $1-p$ in the correlation can be interpreted in terms of the definition of the noise reinforced random walk, since at each discrete time step,  with probability $1-p$ the processes $\hat{S}$ and $S$ share the same step $X_n$. 

Turning our attention to the counterbalanced random walk, when only working with the pair $(S_n, \check{S}_n)$  we can extend the convergence  to $p \in [0,1)$, and is the content of the following result: 
\begin{thm} \label{theorem:InvarianceCounterBalancedCase}
Let $p\in [0,1)$ and suppose that $X \in L^2(\PP)$ is centred. If $(S_n)$ is a random walk with typical step distributed as $X$ and $(\hat{S}_n)$ is its counterbalanced version of parameter $p$, then we have the weak convergence of the sequence of processes in the sense of Skorokhod as $n$ tends to infinity
\begin{equation} \label{equation:InvarianceCounterbalance}
    \left( \frac{1}{\sigma \sqrt{n}} {S}_{\lfloor nt \rfloor},  \frac{1}{\sigma \sqrt{n}} \check{S}_{\lfloor nt \rfloor}  \right)_{t \in \mathbb{R}^+} \Longrightarrow \left(  B_t , \check{B}_t  \right)_{t \in \mathbb{R}^+} 
\end{equation}
where $B$ is a Brownian motion and $\check{B}$ is a noise counterbalanced Brownian motion with  covariance $\EE[B_s \check{B}_t] = t^{-p}(t \wedge s)^{p+1}(1-p)/(1+p)$ and $\sigma^2 = \EE[X^2]$. {If $p=1$ and $X$ follows the Rademacher distribution, the result still holds and in particular $B$ and $\check{B}$ are independent.}
\end{thm}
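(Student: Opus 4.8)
The plan is to realise the pair $(S_n, \check S_n)$ as a deterministic rescaling of a bivariate discrete-time martingale and then to invoke the multidimensional martingale functional CLT, exactly as announced in the introduction. Since $X$ is centred, $S_n = X_1 + \cdots + X_n$ is itself an $\mathcal F_n$-martingale, where $\mathcal F_n$ is the natural filtration generated by the first $n$ steps of the coupled construction. For the counterbalanced component, the recursion $\EE(\check S_{n+1}\mid\mathcal F_n) = (1 - p/n)\check S_n$ (the conditional, centred form of the recurrence recalled on page~3 of \cite{BertoinCounterbalancing}) shows that $\check S_n$ is not a martingale, but that $M_n := \check{a}_n\check S_n$ is, where $\check{a}_n := \prod_{k=1}^{n-1}(1 - p/k)^{-1} = \Gamma(n)\Gamma(1-p)/\Gamma(n-p)$ is the Bercu-type normaliser. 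Stirling's asymptotics give $\check{a}_n \sim \Gamma(1-p)\,n^{p}$, the counterbalanced analogue of the sequence $\bh{a}_n \sim n^{-p}$ used for the reinforced walk. I would work with the vector martingale $(S_n, M_n)$ and its rescaling
\[
\Big(\mathcal S^{(n)}_t,\,\mathcal M^{(n)}_t\Big) := \Big(\tfrac{1}{\sigma\sqrt n}S_{\lfloor nt\rfloor},\,\tfrac{1}{\sigma\Gamma(1-p)\,n^{p+1/2}}M_{\lfloor nt\rfloor}\Big),
\]
noting that $\check{a}_{\lfloor nt\rfloor}\sim\Gamma(1-p)(nt)^p$ yields $t^{-p}\mathcal M^{(n)}_t = \sigma^{-1}n^{-1/2}\check S_{\lfloor nt\rfloor}(1+o(1))$, so the theorem will follow from the convergence of $(\mathcal S^{(n)},\mathcal M^{(n)})$ after a deterministic time-scale change.

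The core of the argument is the computation of the three predictable brackets. Writing $\Delta_k M = \check{a}_k\big(\tfrac{p}{k-1}\check S_{k-1} + \check X_k\big)$ and using $\EE(\check X_k\mid\mathcal F_{k-1}) = -\tfrac{p}{k-1}\check S_{k-1}$, a short calculation gives
\[
\EE\big((\Delta_k M)^2\mid\mathcal F_{k-1}\big) = \check{a}_k^2\Big((1-p)\sigma^2 + \tfrac{p}{k-1}\textstyle\sum_{j=1}^{k-1}\check X_j^2 - \tfrac{p^2}{(k-1)^2}\check S_{k-1}^2\Big),
\]
while the decomposition $\check X_k = X_k\mathbf 1_{\varepsilon_k=0} - \check X_{U[k-1]}\mathbf 1_{\varepsilon_k=1}$ together with $\EE(X_k)=0$ gives the clean identity $\EE(\Delta_k S\,\Delta_k M\mid\mathcal F_{k-1}) = (1-p)\sigma^2\check{a}_k$. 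Summing and using $\sum_{k\le n}\check{a}_k\sim\Gamma(1-p)n^{p+1}/(p+1)$ and $\sum_{k\le n}\check{a}_k^2\sim\Gamma(1-p)^2 n^{2p+1}/(2p+1)$, the rescaled brackets converge, for each fixed $t$, to
\[
\langle\mathcal S^{(n)}\rangle_t\to t,\qquad \langle\mathcal M^{(n)}\rangle_t\to\frac{t^{2p+1}}{2p+1},\qquad \langle\mathcal S^{(n)},\mathcal M^{(n)}\rangle_t\to (1-p)\frac{t^{p+1}}{p+1},
\]
which are precisely the brackets of the Gaussian martingale $(B_t,\,\mathcal N_t)$ with $\mathcal N_t := \int_0^t s^{p}\,\mathrm{d}\beta^c_s$; indeed $\check B_t = t^{-p}\mathcal N_t$ and $\langle B,\mathcal N\rangle_t = (1-p)t^{p+1}/(p+1)$ reproduce the covariance $\EE(B_s\check B_t) = t^{-p}(t\wedge s)^{p+1}(1-p)/(1+p)$ claimed in the statement. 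The negligibility of the correction $\tfrac{p^2}{(k-1)^2}\check S_{k-1}^2$ is immediate from $\check S_{k-1} = O_{\PP}(\sqrt k)$, but the convergence of the quadratic variation hinges on the weak law of large numbers $\tfrac1n\sum_{j=1}^n\check X_j^2\to\sigma^2$ for the non-stationary, reinforced squares $\check X_j^2$; this I expect to be the main obstacle, and would establish it by controlling $\EE(\check X_n^2)$ through its own recursion (which forces $\EE(\check X_n^2)\to\sigma^2$) together with an $L^1$ or $L^2$ estimate on the fluctuations of the empirical averages.

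With the brackets identified, it remains to verify a conditional Lindeberg condition for the jumps of $(\mathcal S^{(n)},\mathcal M^{(n)})$: since $X\in L^2$ and the increments $\Delta_k M$ are of order $\check{a}_k = O(k^p)$, the rescaled jumps are uniformly of order $n^{-1/2}(k/n)^p$ and a standard truncation argument using $X\in L^2(\PP)$ makes them asymptotically negligible. The multidimensional martingale FCLT (in the form of \cite{WhittFCLT}) then yields the joint Skorokhod convergence $(\mathcal S^{(n)},\mathcal M^{(n)})\Longrightarrow(B,\mathcal N)$ towards the continuous Gaussian martingale with the above deterministic bracket matrix. Finally I would apply the continuous mapping theorem to $\Phi(x,y)_t = (x_t,\,t^{-p}y_t)$, which sends $(B,\mathcal N)$ to $(B,\check B)$ and $(\mathcal S^{(n)},\mathcal M^{(n)})$ to $(\mathcal S^{(n)}_t,\,\sigma^{-1}n^{-1/2}\check S_{\lfloor nt\rfloor}(1+o(1)))$; the only delicate point is that $\Phi$ is singular at $t=0$, but since $\mathcal N_t = O(t^{(2p+1)/2})$ forces $\check B_t\to 0$ as $t\to 0$, the map is a.s. continuous at the limit and the convergence on $[0,T]$ follows, the $o(1)$ arising from $\check{a}_{\lfloor nt\rfloor}/(\Gamma(1-p)(nt)^p)\to 1$ being uniform away from $0$. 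For the boundary case $p=1$ with Rademacher steps, the very same martingale is available with $\check{a}_n\sim cn$ (the product now started at $k=2$, treating $\check S_1$ separately), the troublesome law of large numbers becomes the tautology $\check X_j^2\equiv 1$, and the cross bracket carries the prefactor $1-p=0$; hence $\langle\mathcal S^{(n)},\mathcal M^{(n)}\rangle_t\to 0$ and the limit $(B,\check B)$ is a pair of independent Brownian motions with $\check B$ of variance $1/(2p+1)=1/3$, which is the asserted independence.
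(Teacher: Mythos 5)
Your overall strategy coincides with the paper's: the same martingale $\check{a}_n\check{S}_n$ with $\check{a}_n=\Gamma(n)/\Gamma(n-p)$ (up to your harmless extra constant $\Gamma(1-p)$), the same three bracket computations with the same limits, the martingale FCLT, and a truncation to remove boundedness. Two steps, however, are not sound as written. First, the law of large numbers $\frac1n\sum_{j\le n}\check X_j^2\to\sigma^2$, which you correctly single out as the main obstacle, is left unproved; note that $\EE(\check X_n^2)=\sigma^2$ exactly (each $\check X_n$ has law $\mu$ up to sign), so the issue is entirely the concentration of the empirical average of a reinforced sequence. The paper resolves it by observing that $\check X_j^2=\hat X_j^2$ when both walks are driven by the same $(\varepsilon_i),(U[i])$, so that $\sum_{j\le n}\check X_j^2$ is a \emph{positively} step-reinforced random walk with step law $X^2$; centring it and invoking the regime-dependent maximal inequalities for positively reinforced walks (which must cover the superdiffusive regime, since $p$ ranges over all of $(0,1)$ here) gives the needed $L^1$ convergence. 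Your proposed ``recursion for $\EE(\check X_n^2)$ plus an $L^2$ fluctuation estimate'' would have to reproduce essentially this argument; as stated it is a placeholder rather than a proof.

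Second, and more seriously, the final step via the continuous mapping theorem fails. The map $\Phi(y)_t=t^{-p}y_t$ is not continuous on the Skorokhod space at any path, including the limit $\mathcal N$: a perturbation of size $\epsilon_n\to0$ supported near $t=0$ is amplified by $t^{-p}$ to something unbounded (take $y^n_t=n^{-1}\mathbf 1_{\{t\ge n^{-2/p}\}}$, which converges uniformly to $0$ while $\sup_t t^{-p}y^n_t=n\to\infty$). Knowing that $\check B_t\to0$ as $t\downarrow0$ constrains the limit path, not the continuity of $\Phi$, so it does not rescue the argument. What is actually required is a tightness estimate at the origin for the prelimit processes, namely $\lim_{\delta\downarrow0}\sup_n\PP\bigl(\sup_{s\le\delta}n^{-1/2}|\check S_{\lfloor ns\rfloor}|>\epsilon\bigr)=0$; the paper obtains this by a dyadic decomposition of $[0,\delta]$ combined with Doob's inequality applied to $\check M=\check a\,\check S$ on each dyadic block, yielding a bound of order $\delta$ uniform in $n$. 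Your argument does give Skorokhod convergence on $[\delta,\infty)$ for every $\delta>0$, but without this uniform control near the origin the convergence on all of $\mathbb R^+$ does not follow.
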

Moreover, the limit process in (\ref{equation:InvarianceCounterbalance}) admits the simple integral representation 
\begin{equation*}
    \left( B_t ,  t^{-p} \int_0^t s^{p} \mathrm{d}\beta_s^c  \right)_{t \in \mathbb{R}^+}
\end{equation*}
where $B=(B_t)_{t \geq 0}$ and $\beta^c = (\beta^c_t)_{t \geq 0}$ denote two standard Brownian motions with $ E(  B_s \beta^c_t)  = (1-p) (t \wedge s)$.\par
 Finally, we turn back our attention  to the noise reinforced setting when the parameter is $p = 1/2$. Our method allows us to establish an invariance principle for the step-reinforced random walk at criticality $p=1/2$ but notice that in this case we do not establish a joint convergence, as the required scalings are no longer compatible. 
\begin{thm} \label{theorem:InvarianceCriticalRegime}
Let $p=1/2$ and suppose that $X \in L^2(\PP)$. Then, we have the weak convergence of the sequence of processes in the sense of Skorokhod as $n$ tends to infinity
\begin{equation} \label{equation:InvarianceCriticalRegime}
    \left( \frac{\hat{S}_{\lfloor n^t \rfloor}- n^t  \EE(X)}{\sigma \sqrt{  \log(n)}   n^{t/2}}  \right)_{t \in \mathbb{R}^+} \Longrightarrow (B_t)_{t \in \mathbb{R}^+}
\end{equation}
where $B=(B_t)_{t \geq 0}$ denotes a standard Brownian motion.
\end{thm}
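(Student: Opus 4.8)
The plan is to reduce to the centred case and then push a time-changed, renormalised Bercu-type martingale through the martingale functional CLT \cite{WhittFCLT}. Since $\hat S_n-n\EE(X)$ is the positively reinforced walk associated with the centred variable $X-\EE(X)$, I may and do assume throughout that $X$ is centred, so that $\EE(\hat X_{k+1}\mid\mathcal F_k)=\frac pk\hat S_k$ with $\mathcal F_k=\sigma(\hat X_1,\dots,\hat X_k)$. The cornerstone is the martingale $M_n:=\hat a_n\hat S_n$ from \cite{Bercu,BertenghiAsymptotic}, where $\hat a_n:=\prod_{k=1}^{n-1}\frac{k}{k+p}=\frac{\Gamma(n)\Gamma(1+p)}{\Gamma(n+p)}\sim\Gamma(1+p)\,n^{-p}$; this choice of $\hat a_n$ is exactly what makes $(M_n)$ an $(\mathcal F_n)$-martingale, with increments $M_{k+1}-M_k=\hat a_{k+1}\bigl(\hat X_{k+1}-\EE(\hat X_{k+1}\mid\mathcal F_k)\bigr)$. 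Specialising to the critical value $p=1/2$ gives $\hat a_n\sim\Gamma(3/2)\,n^{-1/2}=\tfrac{\sqrt\pi}{2}\,n^{-1/2}$.

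Next I would compute the predictable quadratic variation. A direct calculation gives $\mathrm{Var}(\hat X_{k+1}\mid\mathcal F_k)=(1-p)\sigma^2+\frac pk\hat V_k-\frac{p^2}{k^2}\hat S_k^2$ with $\hat V_k:=\sum_{j=1}^k\hat X_j^2$, whence
\begin{equation*}
\langle M\rangle_n=\sum_{k=1}^{n-1}\hat a_{k+1}^2\Bigl[(1-p)\sigma^2+\tfrac pk\hat V_k-\tfrac{p^2}{k^2}\hat S_k^2\Bigr].
\end{equation*}
The heart of the proof is the asymptotic evaluation of this sum at $p=1/2$. Since each $\hat X_j$ has law $\mu$ we have $\EE(\hat V_k)=k\sigma^2$, and a weak law of large numbers (obtained by truncation, as only $X\in L^2$ is assumed) yields $\hat V_k/k\to\sigma^2$ in probability; the first two bracketed terms therefore contribute, in the limit, the coefficient $(1-p)\sigma^2+p\sigma^2=\sigma^2$. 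Because $\hat a_{k+1}^2\sim\Gamma(3/2)^2k^{-1}=\tfrac\pi4 k^{-1}$ and $\sum_{k\le n}k^{-1}\sim\log n$, these two terms sum to $\tfrac\pi4\sigma^2\log n$. For the third term one first extracts the crude bound $\EE\langle M\rangle_n\le\sigma^2\sum_{k<n}\hat a_{k+1}^2=O(\log n)$ by discarding the negative contribution, which gives $\EE(\hat S_k^2)=\EE(M_k^2)/\hat a_k^2=O(k\log k)$; then $\sum_k\hat a_{k+1}^2\frac{p^2}{k^2}\EE(\hat S_k^2)=O\bigl(\sum_k k^{-2}\log k\bigr)=O(1)$, so the third term is negligible. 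Altogether $\langle M\rangle_{\lfloor n^t\rfloor}/\log n\to\tfrac\pi4\sigma^2\,t$ in probability, uniformly on compact $t$-intervals, the exponential time change $t\mapsto n^t$ being precisely what linearises the logarithmic growth of $\langle M\rangle$.

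It then remains to invoke the martingale functional CLT for the renormalised martingale $W^{(n)}_t:=M_{\lfloor n^t\rfloor}/(\sigma\sqrt{\log n})$, whose predictable bracket converges to $\tfrac\pi4 t$ by the previous step. The Lindeberg condition $\frac1{\sigma^2\log n}\sum_{k<\lfloor n^t\rfloor}\EE\bigl(\Delta_k^2\mathbb{1}_{\{|\Delta_k|>\varepsilon\sigma\sqrt{\log n}\}}\mid\mathcal F_k\bigr)\to0$, with $\Delta_k=M_{k+1}-M_k$, follows from $\hat a_{k+1}\to0$ together with $X\in L^2$: writing $|\Delta_k|>\varepsilon\sigma\sqrt{\log n}$ as $|\hat X_{k+1}-\EE(\hat X_{k+1}\mid\mathcal F_k)|>\theta_{n,k}$ with $\theta_{n,k}\to\infty$, the fresh-step part is handled by dominated convergence applied to $\EE\bigl(X^2\mathbb{1}_{\{|X|>\theta_{n,k}\}}\bigr)\to0$, and the repeated-step part by the same truncation used for $\hat V_k$. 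The FCLT then gives $W^{(n)}\Rightarrow\tfrac{\sqrt\pi}{2}B$ in the Skorokhod sense. Finally I would transfer this back to $\hat S$ through the exact identity $\hat S_{\lfloor n^t\rfloor}=M_{\lfloor n^t\rfloor}/\hat a_{\lfloor n^t\rfloor}$ and the deterministic convergence $\hat a_{\lfloor n^t\rfloor}\,n^{t/2}\to\Gamma(3/2)=\tfrac{\sqrt\pi}{2}$, uniform on intervals $[\delta,T]$, so that the $\Gamma(3/2)$ factors cancel and
\begin{equation*}
\frac{\hat S_{\lfloor n^t\rfloor}}{\sigma\sqrt{\log n}\,n^{t/2}}=\frac{M_{\lfloor n^t\rfloor}}{\sigma\sqrt{\log n}}\cdot\frac1{\hat a_{\lfloor n^t\rfloor}n^{t/2}}\Longrightarrow\frac{\sqrt\pi}{2}B_t\cdot\frac{2}{\sqrt\pi}=B_t,
\end{equation*}
the behaviour near $t=0$ being controlled by a separate tightness estimate.

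I expect the main obstacle to be the asymptotic analysis of $\langle M\rangle_n$ at criticality: establishing the weak law $\hat V_k/k\to\sigma^2$ and the negligibility of the $\hat S_k^2$ term under the sole assumption $X\in L^2$ (both requiring truncation rather than a second-moment bound on $\hat X_j^2$), and verifying the Lindeberg condition in the same generality. The transfer from $M$ to $\hat S$, uniform on compacts and in particular near $t=0$, is a further technical point but of a routine nature.
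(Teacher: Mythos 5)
Your proposal is correct in substance and reaches the same limit by the same martingale $\bh{a}_n\hat{S}_n$ and the same exponential time change, but it organises the passage from bounded to general $L^2$ steps quite differently from the paper. The paper first proves the theorem under $\|X\|_\infty<\infty$ — where the almost sure law of large numbers $\hat{S}_n/(\sqrt{n}\log n)\to 0$ and $\hat{W}_n/n \to 0$ handle the $\hat{S}_{k-1}^2/(k-1)^2$ and $\hat{V}_{k-1}/(k-1)$ terms in the bracket, and the jumps are uniformly bounded so that Theorem \ref{thm:FCLT} applies as stated — and only then removes boundedness by decomposing $\hat{S}=\hat{S}^{\leq K(n)}+\hat{S}^{>K(n)}$ at the level of the processes, using the maximal inequality of Lemma \ref{lem:maximalIneq} and Lemma \ref{lemma:reductionLemma} together with a diagonal choice of $K(n)$. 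You instead work with $X\in L^2$ from the outset, replacing the bounded-jumps hypothesis by a conditional Lindeberg condition and pushing the truncation inside the computation of $\langle M\rangle$ (for the weak law $\hat{V}_k/k\to\sigma^2$, which indeed requires truncation since $X^2-\sigma^2$ is only in $L^1$) and inside the Lindeberg sums. Your treatment of the $-p^2\hat{S}_k^2/k^2$ term via the first-moment bound $\EE(\hat{S}_k^2)=O(k\log k)$ and $\sum_k k^{-2}\log k<\infty$ is cleaner than the paper's appeal to an a.s.\ LLN and works directly in $L^2$; your outline of the Lindeberg verification (splitting the conditional law of $\hat{X}_{k+1}$ into the fresh part, handled by $\EE(X^2\mathbf{1}_{\{|X|>\theta\}})\to 0$, and the repeated part, handled by the same tail truncation) is also sound because $\theta_{n,k}\geq c\sqrt{\log n}\to\infty$ uniformly in $k$. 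The one point you must attend to is that the MFCLT you need is \emph{not} the one stated in the paper: Theorem \ref{thm:FCLT} (VIII-3.11 of \cite{Processes}) assumes uniformly bounded jumps, which fails for unbounded $X$; you must invoke a Lindeberg-type variant (e.g.\ VIII-3.33 in \cite{Processes} or the corresponding statement in \cite{WhittFCLT}). With that citation corrected, your route is a valid and arguably more direct alternative; what the paper's two-stage argument buys in exchange is that all the probabilistic estimates are carried out under boundedness, where the a.s.\ laws of large numbers are available and the truncation is a single generic soft step applicable verbatim to all three regimes.
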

Our proofs rely on a version of the martingale Functional Central Limit Theorem (abreviated MFCLT), which we  state  for the reader's convenience.  For more general versions, we refer to Chapter VIII in  \cite{Processes}. If $M=(M^{1}, \dots , M^{d} )$ is an rcll $d$-dimentional process, we denote by $\Delta M$ its jump process, which is the d-dimensional process null at 0 defined as $( M^{1}_t- M^{1}_{t-}, \dots , M^{d}_t- M^{d}_{t-})_{t \in \mathbb{R}^+}$.
\begin{thm}[MFCLT, VIII-3.11 from \cite{Processes}] \label{thm:FCLT}
Assume $M= (M^1, \dots , M^d)$ is $d$-dimentional continuous Gaussian martingale with independent increments,  and predicable covariance process 
    $(\langle M^i,M^j \rangle)_{i,j \in \{ 1,\dots , d \}}$. For each $n$, let $M^n = (M^{n,1}, \dots , M^{n,d})$ be a $d$-dimentional local martingale with uniformly bounded jumps $\|\Delta M^n\| \leq K$ for some constant $K$. The following conditions are equivalent: 
\begin{enumerate}[(i)]
    \item $M^n \Rightarrow M$ in the sense of Skorokhod,
    \item There exists some dense set $D\subset \mathbb{R}^+$ such that for each $t \in D$ and $i,j \in \{1,\dots , d \}$, \\as $n \uparrow \infty$, 
\begin{equation} \label{thm:mfclt-quadraticVariation}
    \langle M^{n,i} , M^{n,j} \rangle_t \rightarrow  \langle M^{i} , M^{j} \rangle_t \quad \quad  \text{ in probability,}
\end{equation}
and 
\begin{equation} \label{thm:mfclt-negligeableJumps}
    \sup_{s \leq t} |\Delta M_s^n| \rightarrow 0  \hspace{25mm}  \text{ in probability}.
\end{equation}
\end{enumerate}
\end{thm}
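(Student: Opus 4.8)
The statement is the martingale functional CLT in the form VIII-3.11 of \cite{Processes}, so I would treat it as a special case of the general convergence theory for semimartingales and prove the equivalence by combining tightness with an identification of the limit points. The substantive implication for the applications in this paper is (ii)$\,\Rightarrow\,$(i); I would prove that direction in full and only sketch the converse, which rests on the continuity of the predictable characteristics along $C$-tight sequences.

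For (ii)$\,\Rightarrow\,$(i) the plan has three stages. First, \textbf{tightness}: since each $\langle M^{n,i}\rangle_t$ converges in probability, along a dense set of times, to the deterministic continuous nondecreasing function $\langle M^i\rangle_t$ by (\ref{thm:mfclt-quadraticVariation}), the family $(\langle M^{n,i}\rangle)_n$ is $C$-tight in the Skorokhod space. Together with the asymptotic negligibility of the jumps (\ref{thm:mfclt-negligeableJumps}), the Aldous--Rebolledo criterion (equivalently, the tightness criterion via predictable characteristics of Chapter VI in \cite{Processes}) yields that $(M^n)_n$ is itself tight, in fact $C$-tight, so that every subsequential limit has continuous paths. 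Second, \textbf{identification of the limit}: let $M^\infty$ be the weak limit along some subsequence. The uniform bound $\|\Delta M^n\|\le K$ together with the convergence of the quadratic variations gives uniform integrability of $M^n_t$ on compact time intervals, so the local-martingale property passes to the limit and $M^\infty$ is a continuous local martingale null at $0$; using (\ref{thm:mfclt-quadraticVariation}) and the continuity of $M^\infty$ one checks that $\langle M^{\infty,i},M^{\infty,j}\rangle_t=\langle M^i,M^j\rangle_t$, a deterministic function of $t$.

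Third, \textbf{Gaussian characterization}: a continuous local martingale whose predictable covariation is deterministic is a Gaussian process with independent increments. Applying It\^o's formula to the complex exponential shows that for every $\lambda\in\RR^d$ the process $\exp\bigl(i\langle\lambda,M^\infty_t\rangle+\tfrac12\sum_{j,k}\lambda_j\lambda_k\langle M^{\infty,j},M^{\infty,k}\rangle_t\bigr)$ is a bounded continuous martingale, which pins down the conditional characteristic function of the increments and forces them to be independent and centred Gaussian with the prescribed covariance. Hence every subsequential limit has the law of $M$; since the finite-dimensional distributions of $M$ are thereby determined, the limit is unique and, combined with tightness, the whole sequence converges, giving (i).

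The main obstacle is the identification step, and specifically the equality $\langle M^{\infty,i},M^{\infty,j}\rangle=\langle M^i,M^j\rangle$: the map sending a path to its quadratic variation is not continuous on Skorokhod space in general, so this cannot be read off directly from $M^n\Rightarrow M^\infty$ and (\ref{thm:mfclt-quadraticVariation}). The resolution uses $C$-tightness to restrict to continuous limits, where the optional and predictable covariations agree, together with the uniform integrability afforded by the bounded jumps, as in the general scheme of \cite{Processes}. For the converse (i)$\,\Rightarrow\,$(ii) I would invoke the continuity of the characteristics functional along $C$-tight sequences: since $M$ is continuous with deterministic covariation, the convergence $M^n\Rightarrow M$ forces both the convergence of $\langle M^{n,i},M^{n,j}\rangle_t$ in (\ref{thm:mfclt-quadraticVariation}) and the vanishing of the maximal jump in (\ref{thm:mfclt-negligeableJumps}).
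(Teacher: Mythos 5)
The paper does not prove this statement: Theorem \ref{thm:FCLT} is quoted verbatim from Chapter VIII of \cite{Processes} (it is VIII-3.11 there) and is used as a black box throughout, so there is no internal proof to compare yours against. What you have written is, in outline, precisely the route by which the result is established in \cite{Processes}: C-tightness of $(M^n)$ from the convergence of the brackets plus asymptotic negligibility of the jumps, identification of every subsequential limit as a continuous local martingale with the prescribed deterministic covariation, and the L\'evy-type characterization via the complex exponential martingale $\exp\bigl(i\langle\lambda,M^\infty_t\rangle+\tfrac12\sum_{j,k}\lambda_j\lambda_k\langle M^{\infty,j},M^{\infty,k}\rangle_t\bigr)$, which forces centred Gaussian independent increments. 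That skeleton is correct, and your flagging of the discontinuity of the quadratic-variation map on Skorokhod space as the main danger point is apt.

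Two steps are stated more quickly than they can be justified, though both are fillable by standard arguments. First, your claim that the bounded jumps \emph{together with} the convergence of the brackets ``gives uniform integrability'' is too fast: convergence in probability of $\langle M^n\rangle_t$ to a constant does not bound $\EE\langle M^n\rangle_t$. The standard repair is localization: stop $M^n$ at $T_n=\inf\{s:\langle M^n\rangle_s\geq C\}$ with $C$ exceeding the limit value; since a jump of the bracket of a locally square-integrable martingale with jumps bounded by $K$ is at most $K^2$, the stopped bracket is bounded by $C+K^2$, giving the needed $L^2$-bounds, while $\PP(T_n\leq t)\to 0$. (Relatedly, upgrading convergence at a dense set of times to locally uniform convergence of the brackets uses a Dini/P\'olya-type argument for monotone processes with continuous deterministic limit; you use this implicitly in the tightness step.) Second, for the converse (i)$\Rightarrow$(ii), ``continuity of the characteristics functional'' should be made precise: the uniformly bounded jumps give the P-UT property for $(M^n)$, so VI-6.6 of \cite{Processes} yields $[M^{n,i},M^{n,j}]\Rightarrow[M^i,M^j]=\langle M^i,M^j\rangle$ jointly with $M^n$, and one then passes from $[\cdot,\cdot]$ to $\langle\cdot,\cdot\rangle$ by a Lenglart-domination argument; the jump condition (\ref{thm:mfclt-negligeableJumps}) follows because weak convergence to a continuous limit forces C-tightness and hence $\sup_{s\leq t}|\Delta M^n_s|\to 0$ in probability. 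With these two points spelled out, your sketch is a faithful reconstruction of the cited proof.
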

\noindent$ \blacklozenge$ \textit{The rest of this paper is organised as follows:}
In Section \ref{section:martingale} we introduce a crucial martingale for our reasoning associated with step-reinforced random walks and investigate its properties. We derive maximal inequalities and asymptotic results for the noise reinforced random walk that will be needed in the sequel. 
Section \ref{section:proofBoundedSteps} is devoted to the proof of Theorem \ref{thm:ConvergenceTripletDifusive} under the additional assumption that the typical step $X$ is bounded and in section \ref{section:reductionArgument} we discuss how to relax this assumption to the general case of unbounded steps by a truncation argument. In the process, we will also deduce the proofs of Theorem \ref{theorem:InvarianceDiffusiveRegime} and Theorem \ref{theorem:InvarianceCounterBalancedCase}. Finally, in Section \ref{section:criticalRegime} we address the proof of Theorem \ref{theorem:InvarianceCriticalRegime} and we shall again proceed in two stages. Since many arguments can be carried over from the previous sections, some details are skipped.

\section{The martingales associated to a reinforced random walk}\label{section:martingale}
\textit{In this section we work under the additional assumption that the typical step $X \in L^2(\PP)$ is centred and recall that we  denote by $\sigma^2= \EE(X^2)$ its variance. The centred hypothesis is maintained for Sections 3 and 4, but dropped in Section 5.
} \smallskip \\

Recall that if $M = (M_n)_{n \geq 0}$ is a discrete-time real-valued and square integrable  martingale, then its predicable variation process $\langle M \rangle $ is the process defined by $\langle M \rangle_0=0$ and for $n \geq 1$, 
\begin{equation*}
    \langle M \rangle_n = \sum_{k=1}^n \EE ( \Delta M_k^2 \mid  \mathcal{F}_{k-1}),
\end{equation*}
while if $(Z_n)$ is another martingale,  the predictable covariation of the pair $\langle M , Z \rangle$  is the process defined by $\langle M , Z \rangle_0 = 0$ and for $n \geq 1$, 
\begin{equation*}
    \langle M,  Z \rangle_n =  \sum_{k=1}^n \EE ( \Delta M_k \Delta Z_k \mid \mathcal{F}_{k-1} ). 
\end{equation*}
We define two sequence $(\bh{a}_n, n \geq 1)$, $(\bc{a}_n, n \geq 1)$  as follows:  Let $\bh{a}_1=\bc{a}_1=1$ and for  each $n \in \{2, 3,  \dots \}$, set 
\begin{equation} \label{forumla:multiplicativeTerm}
   \bh{a}_n = \prod_{k=1}^{n-1} \bh{\gamma}_k^{-1} =\frac{\Gamma(n)}{\Gamma(n+p)}, 
    \hspace{20mm}
    \bc{a}_n = \prod_{k=1}^{n-1} \bc{\gamma}_k^{-1} =\frac{\Gamma(n)}{\Gamma(n-p)}
\end{equation}
for respectivelly $\bh{\gamma}_n = \frac{n+p}{n} $, $\bc{\gamma}_n = \frac{n-p}{n}$ when $n \geq 2$. 
\begin{prop} \label{prop:MultMartStart} 
The processes $\hat{M} = (\hat{M}_n)_{n \geq 0}$, $\check{M} = (\check{M}_n)_{n \geq 0}$
defined as $\hat{M}_0 = \check{M}_0 = 0$ and $\hat{M}_n = \bh{a}_n \hat{S}_n$, $\check{M}_n = \bc{a}_n \check{S}_n$ for $n \geq 1$ are centred square integrable martingales and we denote the natural filtration generated by the pair by  $(\mathcal{F}_n)$, where $\mathcal{F}_0$ is the trivial sigma-field. Further, their respective predictable quadratic variation processes   is given by $\langle \hat{M} \rangle_0= \langle \check{M} \rangle_0 = 0 $ and, for all $n \geq 1$
\begin{align}
\label{formula:TheQuadraticVariation}
    \langle \hat{M} \rangle_n = \sigma^2 + \sum_{k=2}^n  \bh{a}_k^2 \left( (1-p)\sigma^2 - p^2 \left( \frac{\hat{S}_{k-1} }{k-1} \right)^2 + p \frac{\hat{V}_{k-1}}{k-1} \right) 
\end{align}
and 
\begin{align}
\label{formula:TheQuadraticVariationBalanced}
    \langle \check{M} \rangle_n = \sigma^2 + \sum_{k=2}^n \bc{a}_k^2 \left( (1-p)\sigma^2 - p^2 \left( \frac{\check{S}_{k-1} }{k-1} \right)^2 + p \frac{\hat{V}_{k-1}}{k-1} \right) 
\end{align}
where $(\hat{V}_n)_{n \geq 1}$ is the step-reinforced process given by $\hat{V}_n= \hat{X}_1^2 + \dots + \hat{X}_n^2$ and the sums should be considered identical to zero for $n=1$.  
\end{prop}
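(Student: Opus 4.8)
The plan is to get the martingale property from a single one-step conditioning, and then to read off the two predictable quadratic variations from the martingale increments; the only genuinely delicate point is the appearance of $\hat{V}$ (rather than a separate ``counterbalanced'' analogue) in \eqref{formula:TheQuadraticVariationBalanced}.

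First I would identify the conditional law of one increment. At step $n+1$ the pair $(\varepsilon_{n+1},U[n])$ is independent of $\mathcal{F}_n$, so conditioning on it and using $\EE(X)=0$ gives $\EE(\hat{X}_{n+1}\mid\mathcal{F}_n)=\frac{p}{n}\hat{S}_n$ and $\EE(\check{X}_{n+1}\mid\mathcal{F}_n)=-\frac{p}{n}\check{S}_n$, the sign being the effect of the counterbalancing. Hence $\EE(\hat{S}_{n+1}\mid\mathcal{F}_n)=\frac{n+p}{n}\hat{S}_n=\bh{\gamma}_n\hat{S}_n$ and $\EE(\check{S}_{n+1}\mid\mathcal{F}_n)=\frac{n-p}{n}\check{S}_n=\bc{\gamma}_n\check{S}_n$. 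Since \eqref{forumla:multiplicativeTerm} gives $\bh{a}_{n+1}=\bh{a}_n\bh{\gamma}_n^{-1}$ and $\bc{a}_{n+1}=\bc{a}_n\bc{\gamma}_n^{-1}$, these deterministic normalisations cancel the drift exactly, so $\EE(\hat{M}_{n+1}\mid\mathcal{F}_n)=\hat{M}_n$ and $\EE(\check{M}_{n+1}\mid\mathcal{F}_n)=\check{M}_n$. Square-integrability and centredness are immediate: each $\hat{X}_k$ equals one of $X_1,\dots,X_k$ and each $\check{X}_k$ equals one of them up to a sign, so $\hat{S}_n,\check{S}_n\in L^2(\PP)$ while $\bh{a}_n,\bc{a}_n$ are deterministic, and $\EE(X)=0$ forces all increments to be centred.

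Next I would compute $\langle\hat{M}\rangle$. For $k\ge 2$ I write $\Delta\hat{M}_k=\bh{a}_k\hat{X}_k+(\bh{a}_k-\bh{a}_{k-1})\hat{S}_{k-1}$, whose second summand is $\mathcal{F}_{k-1}$-measurable; combined with $\EE(\Delta\hat{M}_k\mid\mathcal{F}_{k-1})=0$ this gives $\EE(\Delta\hat{M}_k^2\mid\mathcal{F}_{k-1})=\bh{a}_k^2\,\mathrm{Var}(\hat{X}_k\mid\mathcal{F}_{k-1})$. The conditional mean is $\frac{p}{k-1}\hat{S}_{k-1}$ as above, while conditioning on $(\varepsilon_k,U[k-1])$ gives the conditional second moment $(1-p)\sigma^2+\frac{p}{k-1}\hat{V}_{k-1}$, since on a repetition event the copied step contributes $\hat{X}_{U[k-1]}^2$, averaged to $\hat{V}_{k-1}/(k-1)$. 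Subtracting the square of the mean reproduces the bracket in \eqref{formula:TheQuadraticVariation}; adding the $k=1$ term $\EE(\hat{X}_1^2)=\sigma^2$ (with $\bh{a}_1=1$) and summing yields the stated formula.

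Finally, for $\langle\check{M}\rangle$ the same decomposition gives $\EE(\Delta\check{M}_k^2\mid\mathcal{F}_{k-1})=\bc{a}_k^2\,\mathrm{Var}(\check{X}_k\mid\mathcal{F}_{k-1})$, and the conditional mean $-\frac{p}{k-1}\check{S}_{k-1}$ contributes the term $p^2(\check{S}_{k-1}/(k-1))^2$ after squaring, the sign being irrelevant. The main obstacle is the conditional second moment, and the key observation is that squaring annihilates the sign flip, so that on a repetition event $\check{X}_k^2=\check{X}_{U[k-1]}^2$. A strong induction over the common construction then shows $\hat{X}_i^2=\check{X}_i^2$ for every $i$: the base case is $\hat{X}_1^2=\check{X}_1^2=X_1^2$, the case $\varepsilon_i=0$ gives $\hat{X}_i=\check{X}_i=X_i$, and the case $\varepsilon_i=1$ gives $\hat{X}_i^2=\hat{X}_{U[i-1]}^2=\check{X}_{U[i-1]}^2=\check{X}_i^2$ by the induction hypothesis (the same $U[i-1]$ being used for both walks). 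This yields the pathwise identity $\hat{V}_n=\check{V}_n$, which is precisely what lets the second-moment term be written as $\frac{p}{k-1}\hat{V}_{k-1}$; assembling the pieces gives \eqref{formula:TheQuadraticVariationBalanced}.
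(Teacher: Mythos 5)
Your proposal is correct and follows essentially the same route as the paper: one-step conditioning gives $\EE(\hat{X}_{n+1}\mid\mathcal{F}_n)=p\hat{S}_n/n$ and $\EE(\check{X}_{n+1}\mid\mathcal{F}_n)=-p\check{S}_n/n$, the deterministic factors $\bh{a}_n,\bc{a}_n$ cancel the drift, and the brackets are read off from the conditional variance of a single step via $\EE(\hat{X}_{n+1}^2\mid\mathcal{F}_n)=(1-p)\sigma^2+p\hat{V}_n/n$ together with the pathwise identity $\hat{X}_k^2=\check{X}_k^2$. Your explicit induction for that last identity only spells out what the paper dismisses as clear from the construction.
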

\begin{proof}
Starting with the positive-reinforced case, notice that for any $n \geq 1$ we have 
\begin{align} \label{formula:conditionalRstep}
\EE \left( \hat{X}_{n+1} \mid \mathcal{F}_n \right) = (1-p) \EE (X) + p \frac{\hat{X}_1 + \dots + \hat{X}_n}{n} = p \frac{\hat{S}_n}{n}.
\end{align}
Hence, since $\hat{S}_{n+1}=\hat{S}_n+\hat{X}_{n+1}$, and $ \bh{\gamma}_n = (n+p)/n$, 
\begin{align}\label{proofstep:NeededForMartingale}
\EE( \hat{S}_{n+1} \mid \mathcal{F}_n) =  \bh{\gamma}_n \cdot \hat{S}_n
\end{align} 
and therefore, we obtain
\begin{align*}
\EE(\hat{M}_{n+1} \mid \mathcal{F}_n)= \bh{a}_{n+1} \EE(  \hat{S}_{n+1} \mid \mathcal{F}_n) = \bh{a}_{n+1} \bh{\gamma}_n \check{S}_n =  \bh{a}_n {\hat{S}_n}=\hat{M}_n.
\end{align*}
Moreover, as $X$ is centred and the steps $(\hat{X}_k)$ are identically distributed by what was discussed in the introduction, we have 
\begin{align*}
\EE( \hat{M}_n) = \EE( \hat{X}_1)=\EE(X)=0
\end{align*}
and we conclude that $(\hat{M}_n)_{n \geq 0}$ is a  martingale. Turning our attention to its quadratic variation, we have $\EE( \hat{S}_n^2) \leq n^2 \EE(X^2)= n^2 \sigma^2$ and hence, $\hat{M}_n$ is indeed square integrable and its predictable quadratic variation exists.
Next, we observe that for $n \geq 1$ we have
\begin{align}
\EE(\hat{M}_{n+1}^2-\hat{M}_n^2 \mid \mathcal{F}_n) &= \EE( (\hat{M}_{n+1}-\hat{M}_n)^2 \mid \mathcal{F}_n) \notag \\
&= \bh{a}_{n+1}^2 {\EE ( (\hat{X}_{n+1}- \EE(\hat{X}_{n+1} \mid \mathcal{F}_n))^2 \mid \mathcal{F}_n )} \notag \\
&= \bh{a}_{n+1}^2 \left( \EE(\hat{X}_{n+1}^2 \mid \mathcal{F}_n)- ( \EE(\hat{X}_{n+1} \mid \mathcal{F}_n))^2 \right)\notag \\
&= \bh{a}_{n+1}^2 \left( {\EE(\hat{X}_{n+1}^2 \mid \mathcal{F}_n) - \frac{p^2}{n^2}\hat{S}_n^2}\right). \label{eq:RequiredForProof} 
\end{align}
Finally, as was pointed out in the proof of Lemma 3 in \cite{BertoinNoise}, and can be verified from the definition of the $\hat{X}_n$,  it holds that 
\begin{align} \label{eq:SecondMomentStep}
    \EE( \hat{X}_{n+1}^2 \mid \mathcal{F}_n) = p \frac{\hat{V}_n}{n} +(1-p) \sigma^2,
\end{align}
and hence we arrive at the formula (\ref{formula:TheQuadraticVariation}).
\par For the negative-reinforced case, the proof follows very similar steps after minor modifications have been made. Since 
\begin{align}\label{formula:conditionalCstep} 
    \EE (\check{X}_{n+1} \mid \mathcal{F}_n ) 
    = m (1-p) - p \frac{\check{X}_1+ \dots + \check{X}_n}{n}
    =  -\frac{p}{n} \check{S}_n
\end{align}
we now have 
\begin{equation}\label{equation:conditionalStepCounterbalanced}
    \EE (\check{S}_{n+1} \mid \mathcal{F}_n ) = \left( \frac{n-p }{n} \right) \check{S}_n =  \bc{\gamma}_n {\check{S}_n}, 
\end{equation}
and the martingale property for $(\check{M}_n)_{n \geq 0}$ follows. For the quadratic variation, the proof is the same after noticing that since clearly $\check{X}_k^2 = \hat{X}_k^2$, we  can also write   $\hat{V}_n= \check{X}_1^2 + \dots + \check{X}_n^2$.
\end{proof}
We write for further use the following asymptotic behaviours:  the first ones are related to the study of the positive-reinforced case and hold for $p \in (0,1/2)$:
\begin{equation} \label{equation:asymptSeries} 
\lim_{n \rightarrow \infty} n^{2p-1} \sum_{k=1}^n \bh{a}_k^2 = \frac{1}{1-2p}, \hspace{25mm}  \bh{a}_n = \frac{\Gamma(n) }{\Gamma(n+p)}  \sim n^{-p}  \quad \text{ as } n \uparrow  \infty
\end{equation}
while for $p = 1/2$ we have a change on the asymptotic behaviour in the series,
\begin{equation}
  \label{equation:asymptSeriesCritical}
    \lim_{n \rightarrow \infty} \frac{1}{\log(n)} \sum_{k=1}^n \bh{a}_k^2 = 1,  \hspace{32mm}  \bh{a}_n = \frac{\Gamma(n) }{\Gamma(n+1/2)}  \sim n^{-\frac{1}{2}}  \quad \text{ as } n \uparrow  \infty
\end{equation}
which is the reason behind the different scaling showing in Theorem \ref{theorem:InvarianceCriticalRegime}. On the other hand, for the negatively-reinforced case we have for $p \in (0,1)$, 
\begin{equation} \label{formula:asyptoticSeriesCounterbalanced}
    \lim_{ n \rightarrow \infty } \frac{ 1 }{n^{1+2p}}  \sum_{k=1}^{n} \bc{a}^2_k = \frac{ 1 }{{1+2p}},
    \hspace{25mm}
    \bc{a}_n = \frac{\Gamma(n) }{\Gamma(n-p)} \sim n^{p}  \quad \text{ as } n \uparrow \infty.
\end{equation}
The limits are derived from standard Gamma function asymptotic behaviour, and were already pointed out in Bercu \cite{Bercu}. We first focus our attention on a law of large numbers, that will be needed in the proof of  Theorem \ref{theorem:InvarianceDiffusiveRegime}. 
\begin{lem} \label{thm:LLNDiffusive} Suppose that $\|X \|_\infty < \infty$ and $p \in (0,1/2)$.
We have the almost sure convergence
\begin{equation*}
    \lim_{n \to \infty} \frac{\hat{S}_n}{n^{1-p}} =0 \quad \text{a.s.}
\end{equation*}
and a fortiori, $\lim_{n \to \infty} n^{-1}{\hat{S}_n}=0 \text{ a.s.}$
\end{lem}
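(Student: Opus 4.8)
The plan is to transfer the statement to the martingale $\hat{M}_n = \bh{a}_n \hat{S}_n$ of Proposition \ref{prop:MultMartStart} and then invoke a strong law of large numbers for square-integrable martingales. Since $\bh{a}_n \sim n^{-p}$ by (\ref{equation:asymptSeries}), writing $\hat{S}_n = \hat{M}_n/\bh{a}_n$ gives
\begin{equation*}
    \frac{\hat{S}_n}{n^{1-p}} = \frac{\hat{M}_n}{\bh{a}_n n^{1-p}} = \left( n^p \bh{a}_n \right)^{-1} \frac{\hat{M}_n}{n^{1-2p}}, \qquad n^p \bh{a}_n \to 1,
\end{equation*}
so it is equivalent to prove that $\hat{M}_n = o(n^{1-2p})$ almost surely. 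This reduction is the whole point of introducing the normalising sequence $\bh{a}_n$.

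The first real task is to pin down the growth of the predictable quadratic variation $\langle \hat{M} \rangle_n$ from (\ref{formula:TheQuadraticVariation}). Each increment is $\bh{a}_k^2$ times the conditional variance of $\hat{X}_k$, namely $(1-p)\sigma^2 + p\,\hat{V}_{k-1}/(k-1) - p^2 (\hat{S}_{k-1}/(k-1))^2$. For the \emph{upper} bound I would use $\|X\|_\infty < \infty$: since $\hat{V}_{k-1}/(k-1) \le \|X\|_\infty^2$ and $\sigma^2 \le \|X\|_\infty^2$, each conditional variance is at most $\|X\|_\infty^2$. For the \emph{lower} bound I would use the elementary Cauchy--Schwarz inequality $(\hat{S}_{k-1})^2 \le (k-1)\hat{V}_{k-1}$, i.e. $(\hat{S}_{k-1}/(k-1))^2 \le \hat{V}_{k-1}/(k-1)$; together with $p<1$ this shows the remainder $p\big(\hat{V}_{k-1}/(k-1) - p(\hat{S}_{k-1}/(k-1))^2\big)$ is nonnegative, so each conditional variance is at least $(1-p)\sigma^2$. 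Hence, using (\ref{equation:asymptSeries}),
\begin{equation*}
    (1-p)\sigma^2 \sum_{k=1}^n \bh{a}_k^2 \;\le\; \langle \hat{M} \rangle_n \;\le\; \|X\|_\infty^2 \sum_{k=1}^n \bh{a}_k^2, \qquad \sum_{k=1}^n \bh{a}_k^2 \sim \frac{n^{1-2p}}{1-2p}.
\end{equation*}
In particular $\langle \hat{M} \rangle_\infty = \infty$ and $\langle \hat{M} \rangle_n = O(n^{1-2p})$.

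To conclude I would apply the martingale strong law: for a square-integrable martingale one has $\hat{M}_n = o(\langle \hat{M} \rangle_n)$ almost surely on the event $\{\langle \hat{M} \rangle_\infty = \infty\}$, which here holds surely. Combined with the upper bound $\langle \hat{M} \rangle_n = O(n^{1-2p})$, this yields $\hat{M}_n = o(n^{1-2p})$ and therefore $\hat{S}_n/n^{1-p} \to 0$ a.s. The ``a fortiori'' assertion is immediate, since $\hat{S}_n/n = n^{-p}\,(\hat{S}_n/n^{1-p}) \to 0$. The main obstacle is the two-sided control of $\langle \hat{M}\rangle_n$: the divergence $\langle \hat{M}\rangle_\infty=\infty$ (needed to license the martingale SLLN) rests on the positivity of the remainder via Cauchy--Schwarz, while the matching upper bound $O(n^{1-2p})$ is exactly where the boundedness hypothesis $\|X\|_\infty<\infty$ enters; everything else is bookkeeping with the Gamma-function asymptotics already recorded in (\ref{equation:asymptSeries}).
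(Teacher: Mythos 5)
Your proof is correct and follows essentially the same route as the paper: reduce to the martingale $\hat{M}_n=\bh{a}_n\hat{S}_n$, bound $\langle\hat{M}\rangle_n$ above by $\|X\|_\infty^2\sum_{k}\bh{a}_k^2\sim n^{1-2p}/(1-2p)$ using the boundedness of $X$, and invoke the strong law of large numbers for square-integrable martingales. The only (harmless) difference is that you add a Cauchy--Schwarz lower bound to certify $\langle\hat{M}\rangle_\infty=\infty$ before applying the strong law, whereas the paper applies the version of that law (Duflo, Theorem 1.3.24) normalised by a deterministic sequence $\nu_n\uparrow\infty$ dominating $\langle\hat{M}\rangle_n$, which makes that check unnecessary.
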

\begin{proof}
The proof of Theorem \ref{thm:LLNDiffusive} is adapted from \cite{Bercu} and outlined here for the sake of completeness. Under the standing assumption that the typical step $X$ is bounded,  we gather from (\ref{eq:RequiredForProof}) that 
\begin{align} \label{notation:vn}
    \langle \hat{M} \rangle_n \leq \nu_n:= \|X\|_\infty^2 \sum_{k=1}^n \bh{a}_{k+1}^2.
\end{align}
By (\ref{equation:asymptSeries}) we have as $n \to \infty$
 \begin{equation*}
  \nu_n \sim n^{1-2p}    \cdot  \|X\|_\infty \frac{1}{1-2p} 
 \end{equation*}
more precisely, as $p<1/2$, the sequence $\nu_n$ increases to infinity with power polynomial rate  of $n^{1-2p}$. We then obtain from the strong law of large numbers for martingales, see for instance Theorem 1.3.24 in \cite{duflo}, that 
\begin{align*}
    \lim_{n \to \infty} \frac{\hat{M}_n}{\nu_n}=0 \quad \text{a.s.}
\end{align*}
and our claim follows. 
\end{proof}
We continue by investigating bounds for the second moments of the supremum process of the step-reinforced random walk $\hat{S}$ for all regimes, and then deduce related LLN type results that will also be needed afterwards. 
\begin{lem}  \label{lem:maximalIneq}
For every $n \geq 1$, the following bounds hold for some numerical constant $c$:
\begin{equation*}
    \sigma^{-2} \EE \left( \sup_{k \leq n} |\hat{S}_k|^2 \right)  \leq 
    \begin{cases}
    c  n, & \text{ if } p \in (0, 1/2)\\
    c  n \log n,  & \text{ if } p = 1/2\\
    c  n^{2p}, & \text{ if } p \in  (1/2,1).
    \end{cases}
\end{equation*}
\end{lem}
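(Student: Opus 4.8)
The plan is to transfer the problem from $\hat{S}$ to the martingale $\hat{M}=(\hat{M}_n)$ of Proposition \ref{prop:MultMartStart}, where Doob's $L^2$ maximal inequality is available, and then to read off the three regimes from the asymptotics already recorded. The key structural observation is that the sequence $(\bh{a}_n)$ is decreasing for $p>0$: from the recursion $\bh{a}_{n+1}=\bh{\gamma}_n^{-1}\bh{a}_n=\frac{n}{n+p}\bh{a}_n$ we get $\bh{a}_k^{-2}\leq\bh{a}_n^{-2}$ for all $k\leq n$. Writing $\hat{S}_k=\bh{a}_k^{-1}\hat{M}_k$, this monotonicity lets us pull the largest weight out of the supremum,
\[
\sup_{k\leq n}|\hat{S}_k|^2=\sup_{k\leq n}\bh{a}_k^{-2}|\hat{M}_k|^2\leq\bh{a}_n^{-2}\sup_{k\leq n}|\hat{M}_k|^2,
\]
so that Doob's inequality applied to the square-integrable martingale $\hat{M}$ yields $\EE\big(\sup_{k\leq n}|\hat{S}_k|^2\big)\leq 4\,\bh{a}_n^{-2}\,\EE(\hat{M}_n^2)=4\,\bh{a}_n^{-2}\,\EE(\langle\hat{M}\rangle_n)$, using $\hat{M}_0=0$.

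The second step is to control $\EE(\langle\hat{M}\rangle_n)$ via the explicit formula (\ref{formula:TheQuadraticVariation}). Dropping the nonpositive term $-p^2(\hat{S}_{k-1}/(k-1))^2$ and taking expectations, and recalling that each $\hat{X}_j$ is distributed as $X$ so that $\EE(\hat{V}_{k-1})=(k-1)\sigma^2$, the coefficient of $\bh{a}_k^2$ in each summand simplifies to $(1-p)\sigma^2+p\sigma^2=\sigma^2$. Hence
\[
\EE(\langle\hat{M}\rangle_n)\leq\sigma^2\Big(1+\sum_{k=2}^n\bh{a}_k^2\Big)=\sigma^2\sum_{k=1}^n\bh{a}_k^2,
\]
using $\bh{a}_1=1$. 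Combining with the first step gives the clean intermediate bound $\sigma^{-2}\,\EE\big(\sup_{k\leq n}|\hat{S}_k|^2\big)\leq 4\,\bh{a}_n^{-2}\sum_{k=1}^n\bh{a}_k^2$, valid for every $p\in(0,1)$.

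The final step is to insert the Gamma asymptotics $\bh{a}_n\sim n^{-p}$, i.e. $\bh{a}_n^{-2}\sim n^{2p}$, together with the behaviour of the series in each regime. For $p\in(0,1/2)$, (\ref{equation:asymptSeries}) gives $\sum_{k=1}^n\bh{a}_k^2\sim n^{1-2p}/(1-2p)$, so the product is of order $n$. For $p=1/2$, (\ref{equation:asymptSeriesCritical}) gives $\sum_{k=1}^n\bh{a}_k^2\sim\log n$ while $\bh{a}_n^{-2}\sim n$, so the product is of order $n\log n$. For $p\in(1/2,1)$ we have $2p>1$, hence $\sum_k\bh{a}_k^2<\infty$ converges and is $O(1)$, leaving the product of order $\bh{a}_n^{-2}\sim n^{2p}$. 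Each of these matches the claimed order up to a numerical constant $c$.

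No single step is genuinely hard; the two points requiring care are the monotonicity of $(\bh{a}_n)$ that allows $\bh{a}_n^{-2}$ to be extracted from the supremum, and the observation that for $p>1/2$ the series $\sum_k\bh{a}_k^2$ converges, which is precisely the reason the superdiffusive rate $n^{2p}$ rather than $n$ appears in that regime.
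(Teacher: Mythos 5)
Your proposal is correct and follows essentially the same route as the paper: pass to the martingale $\hat{M}_n=\bh{a}_n\hat{S}_n$, bound $\EE(\langle\hat{M}\rangle_n)$ by $\sigma^2\sum_{k}\bh{a}_k^2$ after dropping the nonpositive term and using $\EE(\hat{V}_{k-1})=(k-1)\sigma^2$, apply Doob's $L^2$ inequality, extract $\bh{a}_n^{-2}$ from the supremum via monotonicity of $(\bh{a}_k)$, and read off the three regimes from the Gamma-function asymptotics. The only cosmetic difference is that you make explicit the monotonicity step that the paper calls ``evident''.
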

\begin{proof} We tackle each of the three cases $p \in (0,1/2)$, $p=1/2$ and $p \in (1/2,1)$ individually:
\begin{enumerate}[(i)]
    \item Let us first consider the case when $p \in (0, 1/2)$. We observe that by (\ref{eq:RequiredForProof}) and by (\ref{equation:asymptSeries})
\begin{align*}
    \EE(\hat{M}_n^2) = \EE( \langle \hat{M} \rangle _n)  \leq \sum_{k=1}^{n} \sigma^2 \bh{a}_{k+1}^2 \sim \sigma^2 \frac{1}{1-2p}n^{1-2p}, \quad \text{as } n \to \infty.
\end{align*}
Hence we obtain by Doob's inequality that 
\begin{align*}
    \EE \left( \sup_{k \leq n} | \hat{M}_k|^2 \right) \leq c_1 \sigma^2 n^{1-2p}
\end{align*}
where $c_1>0$ is some constant. Since it evidently holds that
\begin{align*}
    \EE \left( \sup_{k \leq n} | \hat{S}_k|^2 \right) \leq \frac{1}{\bh{a}_n^2}  \EE \left( \sup_{k \leq n} | \hat{M}_k|^2 \right),
\end{align*}
it follows readily that
\begin{align*}
    \EE \left( \sup_{k \leq n} | \hat{S}_k|^2 \right) \leq c_1 \sigma^2  \frac{n^{1-2p}}{\bh{a}_n^2}  \sim c_1 \sigma^2 n, \quad \text{as } n \to \infty. 
\end{align*}
By monotonicity, we conclude the proof for this case.
\item  Let us now assume that $p=1/2$, we then obtain by (\ref{equation:asymptSeriesCritical}) and monotonicity that for all $n \geq 1$ we have 
\begin{align*}
    \EE ( \langle \hat{M} \rangle _n) \leq \sigma^2   \log n.
\end{align*}
We conclude as in the previous case that this implies
\begin{align*}
     \EE \left( \sup_{k \leq n} | \hat{S}_k|^2 \right) \leq c_2 \sigma^2 n \log n,
\end{align*}
where $c_2>0$ is some constant. 
\item  Finally, let us consider the case $p>1/2$. Here, we then have as $n \to \infty$
\begin{align*}
    \sigma^2 \sum_{k=1}^n \bh{a}_{k+1}^2 \leq C\sigma^2 \sum_{k=1}^n \frac{1}{k^{2p}} <  \tilde{c}
\end{align*}
for a constant $C$ large enough and some finite constant  $\tilde{c}$. This entails that $\EE ( \langle \hat{M} \rangle_n) \leq \sigma^2 \tilde{c}$ and we deduce as before the bound 
\begin{align*}
     \EE \left( \sup_{k \leq n} | \hat{S}_k|^2 \right) \leq c_3 \sigma^2 n^{2p},
\end{align*}
where $c_3>0$ is some constant.
\end{enumerate}
Thus we have established the desired bounds.
\end{proof}
As  an application of the maximal inequalities displayed in Lemma \ref{lem:maximalIneq} for the positive reinforced random walk, we establish $L^2$ convergence type results for all regimes $p \in (0,1)$ that will be needed in our proofs: 
\begin{cor} \label{thm:WeakLLN}
We have the following convergences in the $L^2$-sense.
\begin{enumerate}[(i)]
    \item For $p \in (0,1/2)$ we have 
    \begin{equation*}
    \lim_{n \to \infty} \frac{\hat{S}_n}{n^{1-p}} =0.  
\end{equation*}
\item For $p=1/2$ we have
\begin{align*}
    \lim_{n \to \infty} \frac{\hat{S}_n}{\sqrt{n} \log n} =0.
\end{align*}
\item For $p \in (1/2,1)$ we have 
\begin{align*}
    \lim_{n \to \infty} \frac{\hat{S}_n}{n}=0.
\end{align*}
\end{enumerate}
\end{cor}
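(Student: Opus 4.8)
The plan is to deduce all three convergences directly from the maximal inequalities of Lemma \ref{lem:maximalIneq}. Recall that $L^2$ convergence of $\hat{S}_n/b_n$ to $0$ (for a normalising sequence $b_n$) amounts to showing $\EE(\hat{S}_n^2)/b_n^2 \to 0$. Since trivially $\EE(\hat{S}_n^2) \le \EE(\sup_{k \le n}|\hat{S}_k|^2)$, the bounds of Lemma \ref{lem:maximalIneq} immediately control the numerator, and it remains only to verify in each regime that the chosen $b_n^2$ grows strictly faster than the corresponding second-moment bound, i.e. to compare exponents.

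Concretely, I would argue case by case. For $p \in (0,1/2)$ the normalisation is $b_n = n^{1-p}$, so $b_n^2 = n^{2-2p}$, while Lemma \ref{lem:maximalIneq} gives $\EE(\hat{S}_n^2) \le c\sigma^2 n$; hence $\EE(\hat{S}_n^2)/b_n^2 \le c\sigma^2 n^{2p-1} \to 0$ because $2p - 1 < 0$. For $p \in (1/2,1)$ the normalisation is $b_n = n$, so $b_n^2 = n^2$, and the bound $\EE(\hat{S}_n^2) \le c\sigma^2 n^{2p}$ yields $\EE(\hat{S}_n^2)/b_n^2 \le c\sigma^2 n^{2p-2} \to 0$ since $2p - 2 < 0$.

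The only case where the two orders coincide at the polynomial level is the critical one $p = 1/2$: here $b_n = \sqrt{n}\,\log n$, so $b_n^2 = n (\log n)^2$, whereas the maximal inequality gives only $\EE(\hat{S}_n^2) \le c\sigma^2 n \log n$. The polynomial factors cancel and one is left with $\EE(\hat{S}_n^2)/b_n^2 \le c\sigma^2/\log n$, which still tends to zero thanks to the extra logarithmic factor built into the normalisation.

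I do not anticipate any genuine obstacle: the substance of the argument is entirely contained in Lemma \ref{lem:maximalIneq}, and the corollary follows simply by checking in each regime that the normalisation grows strictly faster than the square root of the corresponding second-moment bound (by a polynomial power in the first and third cases, and by a logarithmic factor in the critical case). The one point requiring a modicum of care is the critical regime $p = 1/2$, where the decay of $\EE(\hat{S}_n^2)/b_n^2$ is merely logarithmic rather than polynomial, so the convergence hinges on the precise $\log n$ appearing in the normalisation matching the $\log n$ in the maximal bound.
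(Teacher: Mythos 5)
Your proposal is correct and follows essentially the same route as the paper: the paper's own proof of this corollary also deduces each case directly from the maximal inequalities of Lemma \ref{lem:maximalIneq} by comparing $\EE(\sup_{k\le n}|\hat S_k|^2)$ with $f(n)^2$ for the three choices of normalisation, reducing to the observations $n^{2p-1}\to 0$, $1/\log n\to 0$, and $n^{2(p-1)}\to 0$ respectively. Your extra remark on the critical case (the cancellation of the polynomial factors and the surviving $1/\log n$) matches the paper's computation exactly.
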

\begin{proof}
Let $(f(n))$ be a sequence of positive numbers and notice that by Lemma \ref{lem:maximalIneq}, if as $n \uparrow \infty$
\begin{equation*}  
    \begin{cases}
    \frac{1}{f^2(n)}n \rightarrow 0,  &\text{ if } p \in (0,1/2)\\
    \frac{1}{f^2(n)}(n \log n)  \rightarrow 0, & \text{ if } p  = 1/2 \\
    \frac{1}{f^2(n)}n^{2p} \rightarrow 0, & \text{ if } p \in (1/2,1)
    \end{cases},
\end{equation*}
then we have convergence in the $L^2$-sense to 0 of the sequence $(\hat{S}_n/f(n))$. Now respectively for each one of the tree cases:  \begin{enumerate}[(i)]
    \item We take $f(n):= n^{1-p}$ and observe that $n^{2p-1} \to 0$ as $n \to \infty$ since $p \in (0,1/2)$.
    \item We take $f(n):= \sqrt{n} \log n$, plainly $1/ \log(n) \to 0$ as $n \to \infty$.
    \item We take $f(n):=n$ and observe that $n^{2(p-1)} \to 0$ as $n \to \infty$ because $p<1$. 
\end{enumerate}
This concludes the proof.
\end{proof}
We wrap up our discussion by mentioning that in the superdiffusive regime $p \in (1/2,1)$ the convergence displayed in Corollary \ref{thm:WeakLLN} can be improved. The following proposition has already been observed in \cite{BertenghiAsymptotic} using a different technique, we present here a more elementary approach. 
\begin{prop} For every fixed  $p \in (1/2,1)$, we have 
\begin{align*}
  \hspace{30mm}  \lim_{n \to \infty} \frac{\hat{S}_n}{n^p}= \hat{W} \hspace{20mm} \text{ a.s. and in }L^2(\PP),
\end{align*}
where $\hat{W} \in L^2(\PP)$ is a non-degenerate random variable.  
\end{prop}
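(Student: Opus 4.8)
The plan is to reuse the martingale $\hat{M}_n = \bh{a}_n \hat{S}_n$ from Proposition \ref{prop:MultMartStart}, just as in the diffusive regime, but now exploiting that for $p \in (1/2,1)$ the multiplicative weights decay fast enough to render $\hat{M}$ bounded in $L^2$. Recall that $\bh{a}_n = \Gamma(n)/\Gamma(n+p) \sim n^{-p}$ by standard Gamma function asymptotics (valid for every $p$, not only in the diffusive range), so that $n^p \bh{a}_n \to 1$ as $n \uparrow \infty$. Writing
\begin{equation*}
    \frac{\hat{S}_n}{n^p} = \frac{\hat{M}_n}{n^p \bh{a}_n},
\end{equation*}
the problem of identifying the limit of $\hat{S}_n/n^p$ is thus reduced to establishing the almost sure and $L^2$ convergence of the martingale $\hat{M}_n$ itself, the two limits then coinciding.

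First I would establish that $(\hat{M}_n)$ is bounded in $L^2$. Taking full expectations in the conditional identity (\ref{eq:RequiredForProof}) and using that each $\hat{X}_{n+1}$ has law $\mu$ (so $\EE(\hat{X}_{n+1}^2)=\sigma^2$) gives
\begin{equation*}
    \EE(\hat{M}_{n+1}^2) - \EE(\hat{M}_n^2) = \bh{a}_{n+1}^2 \left( \sigma^2 - \frac{p^2}{n^2}\EE(\hat{S}_n^2) \right) \leq \sigma^2 \bh{a}_{n+1}^2.
\end{equation*}
Summing and using $\bh{a}_k^2 \sim k^{-2p}$ with $2p>1$ — exactly the summability already exploited in the proof of Lemma \ref{lem:maximalIneq}(iii) — yields $\sup_n \EE(\hat{M}_n^2) \leq \sigma^2 \sum_{k \geq 1} \bh{a}_{k+1}^2 < \infty$. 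Consequently $(\hat{M}_n)$ is an $L^2$-bounded martingale, and the martingale convergence theorem delivers a limit $\hat{W} \in L^2(\PP)$ with $\hat{M}_n \to \hat{W}$ both almost surely and in $L^2(\PP)$. Dividing by $n^p \bh{a}_n \to 1$ transfers this immediately to $\hat{S}_n/n^p \to \hat{W}$ almost surely and in $L^2(\PP)$.

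It remains to verify that $\hat{W}$ is non-degenerate. Since each $\hat{M}_n$ is centred and the convergence holds in $L^2$, we have $\EE(\hat{W}) = 0$, so it suffices to rule out $\hat{W} \equiv 0$ by producing a strictly positive variance. As $\EE(\hat{M}_n^2)$ is non-decreasing in $n$ (being the sum of the nonnegative increments above), we obtain
\begin{equation*}
    \EE(\hat{W}^2) = \lim_{n \to \infty} \EE(\hat{M}_n^2) \geq \EE(\hat{M}_1^2) = \bh{a}_1^2 \, \EE(\hat{S}_1^2) = \sigma^2 > 0,
\end{equation*}
which shows that $\hat{W}$ is not almost surely constant. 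I expect there to be no genuine obstacle in this argument: the entire content is that the $L^2$-boundedness of $\hat{M}$, which is already implicit in Lemma \ref{lem:maximalIneq}(iii), automatically upgrades the previously obtained convergence in probability to the stronger almost sure and $L^2$ modes. The only point demanding a distinct (if short) argument is the non-degeneracy, which is settled by the monotonicity of $n \mapsto \EE(\hat{M}_n^2)$ together with the elementary lower bound $\EE(\hat{M}_1^2) = \sigma^2$.
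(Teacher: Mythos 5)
Your proof is correct and follows essentially the same route as the paper: both establish $\sup_n \EE(\hat{M}_n^2)<\infty$ from the summability of $\sum_k \bh{a}_k^2$ when $p>1/2$, invoke Doob's martingale convergence theorem, and transfer the limit to $\hat{S}_n/n^p$ via $n^p\bh{a}_n\to 1$. Your explicit non-degeneracy argument (monotonicity of $n\mapsto\EE(\hat{M}_n^2)$ plus $\EE(\hat{M}_1^2)=\sigma^2>0$) is a welcome addition, since the paper simply asserts non-degeneracy as part of the conclusion of Doob's theorem without justification.
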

\begin{proof}
Thanks to Proposition \ref{prop:MultMartStart} we know that $\hat{M}_n = \hat{a}_n \hat{S}_n$ is a martingale. Further, we obtain from (\ref{eq:RequiredForProof}) and the asymptotics $\bh{a}_n \sim n^{-p}$  that, for some constant $C$ large enough, 
\begin{align*}
    \EE( | \hat{M}_n|^2) = \EE ( \langle \hat{M} \rangle_n) \leq \sigma^2 \sum_{k=1}^n \bh{a}_{k+1}^2 \leq C \sum_{k=1}^n \frac{1}{k^{2p}},
\end{align*}
for all $n \in \mathbb{N}$. Since $p>1/2$, the latter series is summable and we conclude that $$\sup_{n \in \mathbb{N}} \EE(| \hat{M}_n|^2) <\infty.$$ By Doob's martingale convergence theorem  there exists  a non-degenerate random variable $\hat{W} \in L^2(\PP)$ such that $\hat{M}_n \to \hat{W}$ a.s. and in $L^2(\PP)$ as $n \to \infty$. Using the asymptotics $\bh{a}_n \sim n^{-p}$ we conclude the proof. 
\end{proof}
\section{Proof of Theorem \ref{thm:ConvergenceTripletDifusive} when $X$ is bounded.} \label{section:proofBoundedSteps}
Recall that in this section and Section \ref{section:reductionArgument} we work under the additional assumption that $X$ is centred. As was discussed in the introduction, for positive step-reinforced random walks the centredness hypothesis can be assumed without loss of generality, but that is no longer the case for negative step-reinforced random walks. We are now in a position to prove Theorem \ref{thm:ConvergenceTripletDifusive} when $X$ is bounded and in the process we will also establish Theorem \ref{theorem:InvarianceDiffusiveRegime}  and Theorem \ref{theorem:InvarianceCounterBalancedCase}. For that reason, in several statements we also consider  $p \in [1/2, 1]$ when working with the counterbalanced random walk. Additionally, when we work with the counterbalanced random walk for $p=1$, we assume as in Theorem \ref{theorem:InvarianceCounterBalancedCase} that $X$ is Rademacher distributed, this will be recalled when necessary. Our approach relies on using the martingale introduced in Proposition \ref{prop:MultMartStart} and applying the  MFCLT \ref{thm:FCLT}. We will establish the general case for $X \in L^2(\PP)$ by a truncation argument, detailed in Section \ref{section:reductionArgument}. 
\par Now, the key is to notice that, since by (\ref{equation:asymptSeries}) resp. (\ref{formula:asyptoticSeriesCounterbalanced}) we have for any $t \geq 0$ 
 \begin{equation*}
     \frac{ \bh{a}_{\lfloor nt \rfloor}}{n^{-p} } \sim t^{-p} \hspace{13mm} \text{ and } \hspace{13mm}  \frac{\bc{a}_{\lfloor nt \rfloor}}{ n^{p}} \sim t^{p} \quad \quad \text{ as } n \uparrow  \infty,
 \end{equation*}
in order to get the convergence (\ref{equation:ConvergenceTripletDifusive}) it is  enough to prove  (except for a technical detail at the origin in the third coordinate that will be properly addressed), the convergence 
 \begin{align}
      & \left( \frac{1}{\sqrt{n}} S_{\lfloor nt \rfloor} , \frac{1}{ \sqrt{n}}  \frac{\bh{a}_{\lfloor nt \rfloor}}{n^{-p}} \hat{S} _{\lfloor nt \rfloor}, \frac{1}{\sqrt{n}} \frac{\bc{a}_{\lfloor nt \rfloor }}{n^{p}} \check{S}_{\lfloor nt \rfloor } \right)_{t \in \mathbb{R}^+} \nonumber \\
      & \hspace{50mm} 
      \Longrightarrow 
      \left( \sigma B_t, \sigma \int_0^t s^{-p} d\beta_s, \sigma \int_0^t s^{p} d\beta_s \right)_{t \in \mathbb{R}^+} \label{equation:martingaleLimit1}
 \end{align}
 for Brownian motions $B$ and $\beta^r$ and $\beta^c$ defined as in (\ref{equation:integralRepTriplet})  where the sequence on the left-hand side is now composed by martingales. More precisely, for each $n \in \mathbb{N}$, the processes  
 \begin{equation} \label{definition:TheMartingalePositiveReinf}
  \left(\hat{N}^{(n)}_t\right)_{t \in \mathbb{R}^+} 
  := \left( \frac{1}{\sqrt{n}} \frac{\bh{a}_{\lfloor nt \rfloor}}{n^{-p}} \hat{S} _{\lfloor nt \rfloor} \right)_{t \in \mathbb{R}^+},
  \hspace{15mm} 
  \left(  \check{N}^{(n)}_t \right)_{t \in \mathbb{R}^+} :=   \left(  \frac{1}{\sqrt{n}} \frac{\bc{a}_{\lfloor nt \rfloor }}{n^{p}} \check{S}_{\lfloor nt \rfloor } \right)_{t \in \mathbb{R}^+} 
 \end{equation}
 are just rescaled, continuous-time versions of the martingales we introduced in Proposition \ref{prop:MultMartStart}, multiplied by respective factors of $n^{p-1/2}$ and $n^{-1-p}$. We will also denote as  $N^{(n)}$  the scaled random walk in the first coordinate and we proceed at establishing (\ref{equation:martingaleLimit1})  by verifying that the conditions of the MFCLT \ref{thm:FCLT} are satisfied. In that direction and recalling the condition (\ref{thm:mfclt-quadraticVariation}),  we start by investigating the asymptotic negligeability of the jumps:
\begin{lem}[Asymptotic negligeability of jumps] \label{lemma:asympnegliblejumps} \ 
\begin{enumerate}[(i)]
    \item  Fix $p \in (0,1/2)$. The following convergence holds in probability: 
    \begin{align*}
    & \sup_{t \geq 0  }  |\Delta \hat{N}^{(n)}_t| \rightarrow 0 \quad \quad \text{ as } n \uparrow \infty.
\end{align*}
    \item  Fix $p \in (0,1)$. For each $T >0$, the following convergence holds in probability 
\begin{equation*}
     \sup_{t \leq T} |\Delta \check{N}_t^{(n)}| \rightarrow 0  \quad \quad \text{ as } n \uparrow \infty. 
\end{equation*}
\end{enumerate}
\end{lem}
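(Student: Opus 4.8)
The plan is to reduce both statements to purely deterministic estimates that follow from the boundedness of the reinforced steps together with the Gamma-function asymptotics recorded in (\ref{equation:asymptSeries}) and (\ref{formula:asyptoticSeriesCounterbalanced}). First I would unwind the definitions in (\ref{definition:TheMartingalePositiveReinf}): since $\hat{M}_k=\bh{a}_k\hat{S}_k$ and $\check{M}_k=\bc{a}_k\check{S}_k$, one has directly $\hat{N}^{(n)}_t=n^{p-1/2}\,\hat{M}_{\lfloor nt\rfloor}$ and $\check{N}^{(n)}_t=n^{-p-1/2}\,\check{M}_{\lfloor nt\rfloor}$. Consequently a jump of $\hat{N}^{(n)}$ (resp. $\check{N}^{(n)}$) can occur only when $\lfloor nt\rfloor$ increases by one, say to the value $k$, and it then equals $n^{p-1/2}\,\Delta\hat{M}_k$ (resp. $n^{-p-1/2}\,\Delta\check{M}_k$). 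Everything therefore comes down to bounding the discrete martingale increments uniformly in $k$.

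The second step is to read off the increments from the computation in the proof of Proposition \ref{prop:MultMartStart}: using $\EE(\hat{S}_k\mid\mathcal{F}_{k-1})=\bh{\gamma}_{k-1}\hat{S}_{k-1}$ and $\bh{a}_{k-1}=\bh{\gamma}_{k-1}\bh{a}_k$ one gets $\Delta\hat{M}_k=\bh{a}_k\big(\hat{X}_k-\EE(\hat{X}_k\mid\mathcal{F}_{k-1})\big)$, and identically $\Delta\check{M}_k=\bc{a}_k\big(\check{X}_k-\EE(\check{X}_k\mid\mathcal{F}_{k-1})\big)$ (this is precisely the quantity whose conditional second moment produced (\ref{eq:RequiredForProof})). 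Under the standing assumption $\|X\|_\infty<\infty$, a one-line induction on the reinforcement algorithm shows that $|\hat{X}_k|\le\|X\|_\infty$ and $|\check{X}_k|\le\|X\|_\infty$ almost surely for every $k$, since a fresh step is a copy of $X$ and a reinforced step is $\pm$ one of the previous steps. Hence $|\Delta\hat{M}_k|\le 2\|X\|_\infty\,\bh{a}_k$ and $|\Delta\check{M}_k|\le 2\|X\|_\infty\,\bc{a}_k$, both bounds being deterministic.

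For part (i) the decisive feature is that $\bh{a}_k=\Gamma(k)/\Gamma(k+p)$ is nonincreasing for $k\ge1$ (with $\bh{a}_1=1$), so $\sup_{k\ge1}\bh{a}_k=1$. This yields the deterministic estimate $\sup_{t\ge0}|\Delta\hat{N}^{(n)}_t|\le 2\|X\|_\infty\,n^{p-1/2}$, which tends to $0$ precisely because $p<1/2$; a fortiori the stated convergence in probability holds. For part (ii) the situation is reversed, and this is the one point that requires care: $\bc{a}_k=\Gamma(k)/\Gamma(k-p)\sim k^{p}$ is nondecreasing and unbounded, which is exactly why one must localise to a compact interval $[0,T]$. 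There the relevant indices satisfy $k\le\lfloor nT\rfloor$, so $\max_{k\le\lfloor nT\rfloor}\bc{a}_k=\bc{a}_{\lfloor nT\rfloor}\sim(nT)^{p}$ by (\ref{formula:asyptoticSeriesCounterbalanced}), and therefore $\sup_{t\le T}|\Delta\check{N}^{(n)}_t|\le 2\|X\|_\infty\,n^{-p-1/2}\,\bc{a}_{\lfloor nT\rfloor}\sim 2\|X\|_\infty\,T^{p}\,n^{-1/2}\to0$, the same estimate covering the Rademacher case $p=1$ where $\bc{a}_k=k-1$. The only genuine obstacle is thus conceptual rather than computational: the boundedness of $(\bh{a}_k)$ permits a supremum over all $t\ge0$ in (i), whereas the polynomial growth of $(\bc{a}_k)$ forces the restriction to bounded time horizons in (ii); since both bounds are deterministic, they give convergence almost surely, which is stronger than what the MFCLT requires.
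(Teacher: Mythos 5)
Your proof is correct, and it follows the same overall strategy as the paper (a deterministic uniform bound on the jumps using $\|X\|_\infty<\infty$ and the Gamma-function asymptotics), but with a cleaner decomposition of the increment that is worth noting. The paper writes $\Delta\hat{M}_{k+1}=\hat{S}_k(\bh{a}_{k+1}-\bh{a}_k)+\bh{a}_{k+1}\hat{X}_{k+1}$ and must then control $\sup_k k|\bh{a}_{k+1}-\bh{a}_k|$ via the estimate $\bh{a}_k-\bh{a}_{k+1}\sim p\,k^{-(p+1)}$; you instead use the compensated form $\Delta\hat{M}_k=\bh{a}_k\bigl(\hat{X}_k-\EE(\hat{X}_k\mid\mathcal{F}_{k-1})\bigr)$ (which is indeed what the identity $\bh{a}_{k-1}=\bh{\gamma}_{k-1}\bh{a}_k$ together with $\EE(\hat{S}_k\mid\mathcal{F}_{k-1})=\bh{\gamma}_{k-1}\hat{S}_{k-1}$ gives), so that $|\Delta\hat{M}_k|\le 2\|X\|_\infty\,\bh{a}_k$ follows from conditional Jensen and the almost sure bound $|\hat{X}_k|\le\|X\|_\infty$, and only the monotonicity of $(\bh{a}_k)$ is needed. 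In part (ii) your route also yields the sharper bound $\sup_{t\le T}|\Delta\check{N}^{(n)}_t|\le 2\|X\|_\infty T^p n^{-1/2}(1+o(1))$, without the logarithmic factor that the paper picks up (the paper's intermediate bound replaces $\bigl|\sum_{j\le k}\check{X}_j(1-\bc{\gamma}_k)\bigr|=\frac{p}{k}|\check{S}_k|\le p\|X\|_\infty$ by $\sum_{j\le k}p/j\sim p\log k$, which is wasteful though still sufficient). Both arguments are valid and both give the almost sure, hence in-probability, convergence required by the MFCLT; your remark about $p=1$ is harmless but not needed, since part (ii) of the statement only concerns $p\in(0,1)$.
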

\begin{proof}
(i) We will show that we can bound $\sup_t |\Delta \hat{N}^{(n)}_t|$ uniformly by a  function decreasing to 0 as $n \rightarrow \infty$. In that direction, notice that 
 \begin{align*}
     \sup_{t \geq 0  }  |\Delta \hat{N}^{(n)}_t| 
     &= \frac{1}{ \sqrt{n}} \sup_{k \in \mathbb{N} } \left| {n^p} \bh{a}_{k+1} \hat{S}_{k+1} - {n^p} \bh{a}_{k}\hat{S}_{k} \right| \\
     &= \frac{1}{ n^{  1/2-p}} \sup_{k \in \mathbb{N}} \left| \sum_{i=1}^k \hat{X_i} \left( \bh{a}_{k+1} - \bh{a}_{k} \right) + \bh{a}_{k+1} \hat{X}_{k+1} \right| \\ 
     &\leq  \frac{ \| X \|_\infty }{ n^{  1/2-p}} \sup_{k \in \mathbb{N}} \left\{   k   \left| \bh{a}_{k+1} - \bh{a}_{k} \right| + \bh{a}_{k+1}\right\}.
 \end{align*}
 By hypothesis we have $\|X\|_\infty < \infty$ and since  $\bh{a}_{k}$ is decreasing, it is enough to show that
 \begin{equation} \label{equation:boundedSup}
     \sup_{k \in \mathbb{N}}   k   \left(  \bh{a}_{k} - \bh{a}_{k+1} \right) <\infty.
 \end{equation}
 Recalling (\ref{forumla:multiplicativeTerm}) and the asymptotic behaviour (\ref{equation:asymptSeries}), (\ref{equation:boundedSup}) follows from the following estimate for the difference:
 \begin{equation} \label{equation:asymptoticDiferenceHat}
     \bh{a}_{k} - \bh{a}_{k+1} = \bh{a}_{k} \left( 1- \frac{k+1}{k+1 +p} \right) \sim p k^{-(p+1)}.
 \end{equation}
 (ii) Since we also have $\Delta \check{M}_{k+1} = \check{a}_{k+1}(\check{S}_{k+1} - \check{\gamma}_k \check{S}_k)$, we deduce that 
 \begin{align*}
    \sup_{t \leq T} |\Delta \check{N}^{(n)}_t| 
    &= \frac{1}{n^{1/2 + p}} \sup_{k \leq nT} | \bc{a}_{k+1}\left(\check{S}_{k+1} -  \bc{\gamma}_k\check{S}_k \right) | \\
    &= \frac{1}{n^{1/2 + p}} \sup_{k \leq nT} \bc{a}_{k+1} \bigg| \sum_{j=1}^k \check{X}_j (1-\bc{\gamma}_k) +  \check{X}_{k+1}  \bigg| \\
    &\leq \frac{\| X\|_\infty }{n^{1/2 + p}} \sup_{k \leq nT} \bc{a}_{k+1} \left( \sum_{j=1}^k \left( \frac{p}{j} \right) + 1 \right)
\end{align*}
since $\bc{\gamma}_n = (n-p)/n$. Recalling the asymptotic behaviour (\ref{formula:asyptoticSeriesCounterbalanced}) of $\bc{a}_n$, the supremum in the above expression can be uniformly bounded by
\begin{equation*}
      C \cdot (nT)^{p} \log\left( nT \right) 
\end{equation*}
for a constant $C$ large enough, entailing that $\sup_{t \leq T} |\Delta \check{N}^{(n)}_t| \rightarrow 0$ pointwise for each $T$.
\end{proof}
Now we turn our attention to the joint convergence of the quadratic variation process, and this is the content of the following lemma: 
\begin{lem}[Convergence of quadratic variations] \label{lemma:quadvarconv}
For each fixed $t \in \mathbb{R}^+$, the following convergences hold in probability for $p \in (0,1/2)$, unless specified otherwise: 
\begin{enumerate}[(i)]
    \item $\displaystyle \lim_{n \rightarrow \infty}  \langle \hat{N}^{(n)}, \hat{N}^{(n)} \rangle_t = { \sigma^2 } \int_0^t s^{-2p}  \mathrm{d}s.$
    \item $\displaystyle \lim_{n \rightarrow \infty}  \langle \check{N}^{(n)}, \check{N}^{(n)} \rangle_t = { \sigma^2 } \int_0^t s^{2p}  \mathrm{d}s$, $\quad$ for $p \in (0,1]$.
    \item $\displaystyle \lim_{n \rightarrow \infty} \langle \hat{N}^{(n)}, N^{(n)} \rangle_t  = { \sigma^2 } (1-p) \int_0^t s^{-p} \mathrm{d}s.$
    \item $\displaystyle \lim_{n \rightarrow \infty} \langle \check{N}^{(n)}, N^{(n)} \rangle_t  = { \sigma^2 } (1-p) \int_0^t s^{p} \mathrm{d}s$, $\quad$ for $p \in (0,1]$.
    \item $\displaystyle \lim_{n \rightarrow \infty} \langle \hat{N}^{(n)} , \check{N}^{(n)} \rangle_t =  t  \sigma^2 \frac{1-p}{1+p}$.
\end{enumerate}
where for the case $p=1$ in $(ii)$ and $(iv)$ we assume that $X$ is distributed Rademacher. 
\end{lem}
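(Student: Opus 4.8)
The plan is to realize every continuous-time predictable (co)variation in the statement as a deterministic rescaling of a discrete predictable (co)variation of the martingales of Proposition \ref{prop:MultMartStart}, and then to read off the limit from the explicit formulas (\ref{formula:TheQuadraticVariation}), (\ref{formula:TheQuadraticVariationBalanced}) together with the series asymptotics (\ref{equation:asymptSeries}), (\ref{formula:asyptoticSeriesCounterbalanced}). From (\ref{definition:TheMartingalePositiveReinf}) one has $\hat{N}^{(n)}_t = n^{p-1/2}\hat{M}_{\lfloor nt\rfloor}$, $\check{N}^{(n)}_t = n^{-p-1/2}\check{M}_{\lfloor nt\rfloor}$ and $N^{(n)}_t = n^{-1/2}S_{\lfloor nt\rfloor}$, where $S$ is a martingale since $X$ is centred. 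As each of these processes is piecewise constant with jumps located at the points $k/n$, its predictable (co)variation equals the discrete one multiplied by the product of the deterministic prefactors; for instance $\langle\hat{N}^{(n)},\hat{N}^{(n)}\rangle_t = n^{2p-1}\langle\hat{M}\rangle_{\lfloor nt\rfloor}$, $\langle\hat{N}^{(n)},\check{N}^{(n)}\rangle_t = n^{-1}\langle\hat{M},\check{M}\rangle_{\lfloor nt\rfloor}$, and analogously for the remaining pairs. The lemma thereby reduces to five asymptotic statements about $\langle\hat{M}\rangle$, $\langle\check{M}\rangle$, $\langle\hat{M},S\rangle$, $\langle\check{M},S\rangle$ and $\langle\hat{M},\check{M}\rangle$.

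For (i) and (ii) I would start from (\ref{formula:TheQuadraticVariation}) and (\ref{formula:TheQuadraticVariationBalanced}). In each summand the factor $\hat{V}_{k-1}/(k-1)$ tends to $\sigma^2$, while $(\hat{S}_{k-1}/(k-1))^2$ (resp.\ $(\check{S}_{k-1}/(k-1))^2$) tends to $0$: the former because $\hat{V}_n=\sum_{j\le n}\hat{X}_j^2$ is a positively reinforced walk with centred fluctuation $\hat{V}_n-\sigma^2 n$, so Corollary \ref{thm:WeakLLN} applied to the bounded centred step $X^2-\sigma^2$ gives $\hat{V}_n/n\to\sigma^2$; the latter by Lemma \ref{thm:LLNDiffusive} (resp.\ by the law of large numbers for the counterbalanced walk of \cite{BertoinCounterbalancing}, which gives $\check{S}_n/n\to(1-p)m/(1+p)=0$ since $m=0$). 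Hence each bracket tends to $(1-p)\sigma^2+p\sigma^2=\sigma^2$, and a Toeplitz/Ces\`aro argument — legitimate because $\|X\|_\infty<\infty$ makes all random factors uniformly bounded, upgrading convergence in probability to the weighted averages — yields $\langle\hat{M}\rangle_n\sim\sigma^2\sum_{k\le n}\hat{a}_k^2$ and $\langle\check{M}\rangle_n\sim\sigma^2\sum_{k\le n}\check{a}_k^2$. Inserting (\ref{equation:asymptSeries}), (\ref{formula:asyptoticSeriesCounterbalanced}) and evaluating at $\lfloor nt\rfloor$ produces $\sigma^2 t^{1-2p}/(1-2p)=\sigma^2\int_0^t s^{-2p}\mathrm{d}s$ and $\sigma^2 t^{1+2p}/(1+2p)=\sigma^2\int_0^t s^{2p}\mathrm{d}s$. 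The extension of (ii) to $p\in(1/2,1]$ uses the same identities, the case $p=1$ (Rademacher) being immediate since then $\hat{V}_n=n$ and $\check{S}_n/n\to0$.

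For the mixed covariations (iii) and (iv) I would compute, via (\ref{formula:conditionalRstep}) and (\ref{formula:conditionalCstep}), $\Delta\hat{M}_k=\hat{a}_k(\hat{X}_k-p\hat{S}_{k-1}/(k-1))$ and $\Delta\check{M}_k=\check{a}_k(\check{X}_k+p\check{S}_{k-1}/(k-1))$, together with $\Delta S_k=X_k$. Because $X_k$ is independent of $\mathcal{F}_{k-1}$ and centred, the drift corrections drop out and the only surviving term is $\EE(\hat{X}_kX_k\mid\mathcal{F}_{k-1})=(1-p)\sigma^2$ (the repetition event contributes nothing, the repeated step being $\mathcal{F}_{k-1}$-measurable while $X_k$ is fresh and centred); likewise $\EE(\check{X}_kX_k\mid\mathcal{F}_{k-1})=(1-p)\sigma^2$. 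Thus $\langle\hat{M},S\rangle_n$ and $\langle\check{M},S\rangle_n$ are \emph{deterministic}, equal up to a boundary term to $(1-p)\sigma^2\sum_{k\le n}\hat{a}_k$ and $(1-p)\sigma^2\sum_{k\le n}\check{a}_k$; using $\hat{a}_k\sim k^{-p}$, $\check{a}_k\sim k^{p}$ and summing, the limits $\sigma^2(1-p)\int_0^t s^{-p}\mathrm{d}s$ and $\sigma^2(1-p)\int_0^t s^{p}\mathrm{d}s$ follow with no stochastic input.

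The delicate case is (v), which I expect to be the main obstacle. Expanding $\EE(\Delta\hat{M}_k\Delta\check{M}_k\mid\mathcal{F}_{k-1})$ with the conditional means above, the drift cross-terms combine to $+p^2\hat{a}_k\check{a}_k\,\hat{S}_{k-1}\check{S}_{k-1}/(k-1)^2$, which is negligible after scaling since $\hat{S}_{k-1}/(k-1)\to0$ and $\check{S}_{k-1}/(k-1)\to0$. The genuinely new object is $\EE(\hat{X}_k\check{X}_k\mid\mathcal{F}_{k-1})$: as $\hat{X}_k$ and $\check{X}_k$ are built from the \emph{same} $\varepsilon_k$ and $U[k-1]$, one finds $\EE(\hat{X}_k\check{X}_k\mid\mathcal{F}_{k-1})=(1-p)\sigma^2-\frac{p}{k-1}W_{k-1}$ with $W_n:=\sum_{j\le n}\hat{X}_j\check{X}_j$. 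The key observation is that $W_n$ is itself a \emph{counterbalanced} random walk with typical step distributed as $X^2$: indeed $\hat{X}_j\check{X}_j=X_j^2$ on $\{\varepsilon_j=0\}$ and $\hat{X}_j\check{X}_j=-\hat{X}_{U[j-1]}\check{X}_{U[j-1]}$ on $\{\varepsilon_j=1\}$, which is exactly the negative-reinforcement recursion driven by the i.i.d.\ fresh increments $X_j^2$. Proposition 1.1 of \cite{BertoinCounterbalancing} then gives $W_n/n\to(1-p)\sigma^2/(1+p)$ in probability. Since $\hat{a}_k\check{a}_k\to1$ and $\sum_{k\le n}\hat{a}_k\check{a}_k\sim n$, a final Ces\`aro argument (again justified by boundedness of the steps) yields $\langle\hat{M},\check{M}\rangle_{\lfloor nt\rfloor}/n\to\big[(1-p)\sigma^2-p(1-p)\sigma^2/(1+p)\big]t=\frac{1-p}{1+p}\sigma^2 t$, the claim. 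The difficulty is precisely this identification of $W_n$ as a counterbalanced walk and the verification that its running average enters the covariation with the correct coefficient; once this is in place, everything else is bookkeeping on the already-established formulas and series asymptotics.
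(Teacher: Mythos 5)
Your proposal is correct and follows essentially the same route as the paper: the same reduction to the discrete brackets, the same decomposition $\hat{V}_n=\hat{W}_n+n\sigma^2$ with the law of large numbers for the centred reinforced walk in (i)--(ii), the same computation showing $\EE(\Delta\hat{M}_kX_k\mid\mathcal{F}_{k-1})=\hat{a}_k(1-p)\sigma^2$ (resp.\ $\check{a}_k(1-p)\sigma^2$) in (iii)--(iv), and, crucially, the same identification in (v) of $\sum_{j}\hat{X}_j\check{X}_j$ as a counterbalanced random walk with typical step $X^2$, whose law of large numbers produces the factor $(1-p)/(1+p)$. The only cosmetic difference is that the paper derives the needed $L^1$ convergence of $\check{G}(k)/k$ from Lemma 4.1 of \cite{BertoinCounterbalancing} rather than from Proposition 1.1 plus uniform boundedness, which is equivalent.
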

Lemma \ref{lemma:quadvarconv} provides the key asymptotic behaviour for the sequence of  quadratic variations and its proof is rather long.
\begin{proof} We tackle each item \textit{(i)--(v)} individually, item (v) being the most arduous.
\begin{enumerate}[(i)]
    \item For each $n \in \mathbb{N}$, we gather from (\ref{formula:TheQuadraticVariation}) that the predictable quadratic variation of this martingale is given by 
 \begin{align*}
 & \langle \hat{N}^{(n)}, \hat{N}^{(n)} \rangle_t = \\
 & \hspace{5mm} \frac{1}{n^{1-2p}} \left( \sigma^2+ (1-p)\sigma^2 \sum_{k=2}^{\lfloor nt \rfloor} \bh{a}_{k}^2
     - p^2 \sum_{k=2}^{\lfloor nt \rfloor} \bh{a}_{k}^2 \left( \frac{\hat{S}_{k-1} }{k-1} \right)^2 + p \sum_{k=2}^{\lfloor nt \rfloor} \bh{a}_{k}^2 \left(  \frac{{\hat{V}}_{k-1} }{k-1} \right)  \right).
 \end{align*}
 We will study separately the limit as $n \rightarrow \infty$ of the three nontrivial terms, as the first one evidently vanishes. To start with,  it follows readily from (\ref{equation:asymptSeries}) that 
 \begin{equation}
 \label{equation:term1}
     \lim_{n \rightarrow \infty} \frac{{\sigma^2 }}{n^{1-2p}}  (1-p) \sum_{k=2}^{\lfloor nt \rfloor} \bh{a}_{k}^2 = \frac{{\sigma^2 }}{1-2p} t^{1-2p} (1-p).
 \end{equation}
Now, we claim that the second term converges to zero: 
  \begin{equation}
  \label{equation:term2}
     \lim_{n \rightarrow \infty} \frac{1}{n^{1-2p}}  p^2 \sum_{k=1}^{\lfloor nt \rfloor} \bh{a}_{k}^2  \left( \frac{\hat{S}_{k-1}}{k-1} \right)^2 = 0 \quad \text{ a.s. } 
 \end{equation}
 Indeed,  by (\ref{equation:asymptSeries}) it suffices to notice that by Theorem  \ref{thm:LLNDiffusive}, we have
 \begin{equation*}
     \lim_{k \rightarrow \infty} \frac{ \hat{S}_k}{k} = 0 \quad \text{a.s.}
 \end{equation*}
 since we recall that by our standing assumptions $X$ is both bounded and centered. Finally, we claim that for the last term, the following limit holds:
 \begin{equation}
 \label{equation:term3}
 \lim_{n \rightarrow \infty}
     \frac{1}{n^{1-2p}}  p \sum_{k=1}^{\lfloor nt \rfloor} \bh{a}_{k}^2  \frac{{\hat{V}}_{k-1} }{(k-1)}  =  \frac{{ \sigma^2 }}{1-2p} t^{1-2p} p \quad \text{ a.s.}
 \end{equation}
 In that direction, notice that $(\hat{V}_n)_{n \in \mathbb{N}}$ is the reinforced version of the (non-centered) random walk  
 \begin{equation*}
     V_n = {X}_1^2 + \dots + {X}^2_n, \quad \quad  n \in \mathbb{N}
 \end{equation*}
 with mean $\EE(\hat{X}_i^2) = \EE({X}_i^2)=\sigma^2$. In order to work with a centered reinforced random walk, we introduce 
 \begin{align} \label{equation:reductionToNRRW}
     \hat{W}_n &= \left( \hat{X}_1^2 - \EE(X^2) \right) + \dots + \left( \hat{X}^2_n- \EE(X^2) \right) \\
     &= \hat{Y_1} + \dots + \hat{Y_n}
 \end{align}
 with an obvious  notation. This is the step-reinforced version of the random walk with typical step distributed as $X^2 - \EE(X^2)$, which is centered and bounded. This allows us to write for each $k \in \mathbb{N}$
 \begin{equation*}
     \hat{V}_k = \hat{W}_k + k \cdot \EE(X^2) = \hat{W}_k + k \cdot \sigma^2  
 \end{equation*}
 and by replacing in (\ref{equation:term3}) and the law of large numbers (Theorem \ref{thm:LLNDiffusive}), applied  to the centered reinforced random walk $(\hat{W}_n)_{n \in \mathbb{N}}$, we obtain:
 \begin{align*}
    &\lim_{n \rightarrow \infty} \, \, 
     \frac{1}{n^{1-2p}}  p \sum_{k=1}^{\lfloor nt \rfloor} \bh{a}_{k}^2  \frac{{\hat{V}}_{k-1} }{(k-1)}  \\
    & \hspace{20mm} \, \, =  \lim_{n \rightarrow \infty} \left(
     \frac{1}{ n^{1-2p}}  p \sum_{k=2}^{\lfloor nt \rfloor} \bh{a}_{k}^2  \frac{ \hat{W}_{k-1}}{(k-1)}  + \frac{\sigma^2}{ n^{1-2p}}  p \sum_{k=2}^{\lfloor nt \rfloor} \bh{a}_{k}^2 \right) \\
     & \hspace{20mm} \, \, = \frac{\sigma^2}{1-2p} t^{1-2p} p.
 \end{align*}
 Now, combining (\ref{equation:term1}), (\ref{equation:term2}) and (\ref{equation:term3}) we conclude that 
 \begin{equation*}
     \lim_{n \rightarrow \infty}  \langle \hat{N}^{(n)}, \hat{N}^{(n)} \rangle_t = \frac{\sigma^2}{1-2p}t^{1-2p}. 
 \end{equation*}
\item By (\ref{formula:TheQuadraticVariationBalanced}),  
\begin{align*}
    &\langle \check{N}^{(n)},\check{N}^{(n)} \rangle_t \\
    &\quad \quad =  \frac{1}{n^{1+2p}} \left( \sigma^2 + \sum_{k=2}^{\lfloor nt \rfloor} \bc{a}^2_k \left( (1-p) \sigma^2 - p^2 \left( \frac{\check{S}_{k-1} }{k-1} \right)^2 + p \frac{\hat{V}_{k-1}}{k-1} \right) \right)  
\end{align*}
 and we now study the convergence of the normalised series in the previous expression. By (\ref{formula:asyptoticSeriesCounterbalanced}),  the first term converges towards 
\begin{equation*}
     \lim_{n \rightarrow \infty}  \sigma^2 \frac{(1-p) }{n^{1+2p}} \sum_{k=2}^{\lfloor nt \rfloor}\bc{a}^2_k    =  \sigma^2   \frac{1-p}{1+2p} t^{1+2p} =  \sigma^2  (1-p) \int_0^t s^{2p} \mathrm{d}s.
\end{equation*}
Turning our attention to the second term, we recall from Theorem 1.1 in \cite{BertoinCounterbalancing} that $(\check{S}_n)$ satisfies a law of large numbers: 
\begin{equation*}
    \lim_{n \rightarrow \infty}  \frac{1}{n} \check{S}_n = (1-p) \frac{m}{1+p} =0 \quad \text{ in probability.}
\end{equation*}
Since $\|X \|_\infty< \infty$, $n^{-1}|\check{S}_n| \leq \|X \|_\infty$ and hence the convergence holds in $L^2(\mathbb{P})$. This remark paired with the asymptotic behaviour of the series (\ref{formula:asyptoticSeriesCounterbalanced}) yields:
\begin{equation*}
    \lim_{n \rightarrow \infty} \frac{1}{n^{1+2p}} \sum_{k=2}^{\lfloor nt \rfloor} \bc{a}^2_k \left( \frac{\check{S}_{k-1}}{k-1} \right)^2 =0 \quad \quad \text{in } L^1(\mathbb{P})  \text{  for all } t \geq 0
\end{equation*}
and a fortiori in probability. Finally,  we claim that 
\begin{equation} \label{equation:counterbalancedEq2}
    \lim_{n \rightarrow \infty}  \frac{p}{n^{1+2p}} \sum_{k=2}^{\lfloor nt \rfloor}  \bc{a}^2_k \left( \frac{\hat{V}_{k-1}}{k-1} \right) =  \sigma^2  p \int_0^t s^{2p} \mathrm{d}s \quad \text{in probability. }
\end{equation}
We start assuming that $p<1$, and  proceeding as in (\ref{equation:reductionToNRRW}), we set: 
\begin{align} 
     \hat{W}_n &= \left( \hat{X}_1^2 - \EE(X^2) \right) + \dots + \left( \hat{X}^2_n- \EE(X^2) \right) \\
     &= \hat{Y_1} + \dots + \hat{Y_n}
 \end{align}
It follows that $\hat{V}_n = \hat{W}_n + n  \sigma^2 $ , where $\hat{W}_n$ is a centred  noise reinforced random walk whose steps have the law of $X^2-\EE(X^2)$ with memory parameter $p$. Since $p \in (0,1)$, we recall  from Corollary \ref{thm:WeakLLN} that for all regimes, we have $n^{-1}\hat{W}_n \rightarrow 0$ in $L^1(\PP)$. As a consequence, 
  \begin{equation*}
      \lim_{n \rightarrow \infty}   \frac{p}{n^{1+2p}} \sum_{k=2}^{\lfloor nt \rfloor}  \bc{a}^2_k \left( \frac{\hat{W}_{k-1}}{k-1} \right) = 0 \quad \quad \text{ in } L^1(\PP),
  \end{equation*}
  and a fortiori in probability.  We deduce that 
 \begin{align} 
     &\lim_{n \rightarrow \infty}  \frac{p}{n^{1+2p}} \sum_{k=2}^{\lfloor nt \rfloor}  \bc{a}^2_k \left( \frac{\hat{V}_{k-1}}{k-1} \right) \nonumber \\
     &\hspace{20mm} = \lim_{n \rightarrow \infty}  \left(  \frac{p}{n^{1+2p}} \sum_{k=2}^{\lfloor nt \rfloor}  \bc{a}^2_k \left( \frac{\hat{W}_{k-1}}{k-1} \right) + \frac{  \sigma^2  p}{n^{1+2p}} \sum_{k=2}^{\lfloor nt \rfloor}  \bc{a}^2_k \right) \\
     &\hspace{20mm} =   \sigma^2 \frac{p}{1+2p} t^{1+2p} \label{limit:previouslyStablishedLimit}
 \end{align}
in probability,  which proves (\ref{equation:counterbalancedEq2}). If $p=1$, by hypothesis $X$ takes its values in $\{ -1, 1\}$ and $\check{V}_{k-1} ={k-1}$, yielding that the previously established limit (\ref{limit:previouslyStablishedLimit})  still holds, replacing $1-p$ by $0$. Notice however that if we allowed $X$ to take arbitrary values, we can no longer proceed as we just did since in that case, $\hat{V}_n$ is a straight line  with random slope:
 \begin{equation*}
     \check{V}_n = n \check{X}_1^2.
 \end{equation*}
 Putting all pieces together, we obtain  (ii).
 \item Recalling that $\hat{X}_k = X_k \mathbf{1}_{\{ \epsilon_k=0 \}} + \hat{X}_{U[k-1]} \mathbf{1}_{\{ \epsilon_k=1 \}}$,   and from independence of $X_k$, $\epsilon_k$ and $U[k-1]$ from $\mathcal{F}_{k-1}$, we get for $k \geq 2$  
\begin{align*}
    \EE ( \Delta \hat{M}_{k} X_{k} \mid \mathcal{F}_{k-1} ) 
    &= \EE \left(  ( \hat{S}_{k-1} ( \bh{a}_k - \bh{a}_{k-1} ) + \hat{X}_k \bh{a}_k ) X_k    \mid \mathcal{F}_{k-1} \right) \\
    &= \bh{a}_k \EE \left(  \left(  X_k \mathbf{1}_{\{ \epsilon_k=0 \}} + \hat{X}_{U[k-1]} \mathbf{1}_{\{ \epsilon_k=1 \}} \right) X_k    \mid \mathcal{F}_{k-1} \right)  \\
    &= \bh{a}_k (1-p) \EE(X^2) + \sum_{j=1}^{k-1}  \EE \left(  X_k X_j \mathbf{1}_{\{ U[k-1] =j , \epsilon_k=1 \}} \mid \mathcal{F}_{k-1} \right) \\
    &=\bh{a}_k (1-p)  \sigma^2 
\end{align*}
since the steps are centered, while for $k=1$ we simply get $\EE(\hat{M}_1X_1) =  \sigma^2$. From here,  we deduce 
\begin{equation*}
    \langle \hat{N}^{(n)}, N^{(n)} \rangle_t 
    = n^{p-1} \sum_{k=1}^{\lfloor nt  \rfloor}  \EE ( \Delta M_{k} X_{k} \mid \mathcal{F}_{k-1} ) =  \sigma^2  (1-p) n^{p-1} \left(  (1-p)^{-1} + \sum_{k=2}^{\lfloor nt \rfloor } \bh{a}_k \right) 
\end{equation*}
 and from the convergence
\begin{equation*}
    \lim_{n \rightarrow \infty}  n^{p-1} \sum_{k=2}^n \bh{a}_k = (1-p)^{-1}
\end{equation*}
we conclude  
\begin{equation*}
    \lim_{n \rightarrow \infty} \langle \hat{N}^{(n)}, N^{(n)} \rangle_t =  \sigma^2  (1-p) \lim_{n \rightarrow \infty}  {n^{p-1}} \sum_{k=2}^{\lfloor nt \rfloor } \bh{a}_k = t^{1-p} =  \sigma^2  (1-p) \int_0^t s^{-p} \mathrm{d}s. 
\end{equation*}
\item Recalling that in the counterbalanced case  $\check{X}_k = X_k \mathbf{1}_{\{ \epsilon_k=1 \}}  -\check{X}_{U[k-1]} \mathbf{1}_{\{ \epsilon_k=0 \}}$,  we deduce from similar arguments as in the reinforced case, 
\begin{align*}
    \EE ( \Delta \check{M}_{k} X_{k} \mid \mathcal{F}_{k-1} ) 
    &= \EE \left(  ( \check{S}_{k-1} ( \bc{a}_k - \bc{a}_{k-1} ) + \check{X}_k \bc{a}_k ) X_k    \mid \mathcal{F}_{k-1} \right) \\
    &= \bc{a}_k\EE \left(  \left(  X_k \mathbf{1}_{\{ \epsilon_k=1 \}}  -\check{X}_{U[k-1]} \mathbf{1}_{\{ \epsilon_k=0 \}} \right) X_k    \mid \mathcal{F}_{k-1} \right)  \\
    &= \bc{a}_k \cdot (1-p) \EE(X^2) - \sum_{j=1}^{k-1}  \EE \left(  X_k X_j \mathbf{1}_{\{ U[k-1] =j , \epsilon_k=0 \}} \mid \mathcal{F}_{k-1} \right) \\
    &= \bc{a}_k \cdot (1-p) \sigma^2. 
\end{align*}
Notice that if $p=1$ the argument still holds and hence the above quantity is null for $k \geq 1$. It follows that
\begin{equation*}
    \langle \check{N}^{(n)}, N^{(n)} \rangle_t 
    =  n^{-(1+p)} \sum_{k=1}^{\lfloor nt  \rfloor}  \EE ( \Delta \check{M}_{k} X_{k} \mid \mathcal{F}_{k-1} ) =  \sigma^2  (1-p)  \cdot  n^{-(1+p)} \sum_{k=1}^{\lfloor nt \rfloor } \bc{a}_k
\end{equation*}
 and from the convergence
\begin{equation*}
    \lim_{n \rightarrow \infty}  n^{-(1+p)} \sum_{k=1}^n \bc{a}_k = (1+p)^{-1}
\end{equation*}
we conclude  
\begin{equation*}
   \sigma^{-2}  \lim_{n \rightarrow \infty} \langle \check{N}^{(n)} , N^{(n)} \rangle_t =  (1-p)  \lim_{n \rightarrow \infty}  {n^{-(1+p)}} \sum_{k=1}^{\lfloor nt \rfloor } \bc{a}_k = \frac{1-p}{(1+p)}  t^{1+p} = (1-p) \int_0^t s^{p} \mathrm{d}s. 
\end{equation*}
Finally  if $p=1$, we clearly have $\lim_{n \rightarrow \infty} \langle \check{N}^{(n)},N^{(n)} \rangle_t =0$.
\item Notice that 
\begin{align*}
    \EE ( \Delta \check{M}_k \Delta \hat{M}_k \mid \mathcal{F}_{k-1} ) 
    & = \EE \left( 
     ( \hat{S}_{k-1} ( \bh{a}_k - \bh{a}_{k-1} ) + \hat{X}_k \bh{a}_k )
      ( \check{S}_{k-1} ( \bc{a}_k - \bc{a}_{k-1} ) + \check{X}_k \bc{a}_k )
    \mid \mathcal{F}_{k-1} \right) \\
    & \hspace{-15mm} =  \hat{S}_{k-1} ( \bh{a}_k - \bh{a}_{k-1} ) \check{S}_{k-1} ( \bc{a}_k - \bc{a}_{k-1} ) 
    +  \hat{S}_{k-1} ( \bh{a}_k - \bh{a}_{k-1} ) \EE (\check{X}_k | \mathcal{F}_{k-1}) \bc{a}_k  \\
    & \hspace{20mm} +  \check{S}_{k-1} ( \bc{a}_k - \bc{a}_{k-1} ) \EE (\hat{X}_k \mid \mathcal{F}_{k-1}) \bh{a}_k 
    + \EE ( \check{X}_k \hat{X}_k  \mid \mathcal{F}_{k-1}) \bh{a}_k \bc{a}_k \\
    &\hspace{-15mm}  =: P^{(a)}_k + P^{(b)}_k + P^{(c)}_k + P^{(d)}_k.
\end{align*}
where the notation  was assigned in order of appearance. We write,
\begin{equation*}
    \langle \check{N}^{n}, \hat{N}^{n} \rangle_t 
    = n^{-1} \sum_{k=1}^{\lfloor nt \rfloor } \left( P^{(a)}_k + P^{(b)}_k + P^{(c)}_k + P^{(d)}_k \right) 
\end{equation*}
and study the asymptotic behaviour of these four terms individually. In that direction, we recall from (\ref{formula:conditionalRstep}) and (\ref{formula:conditionalCstep}) the identities $\EE (\hat{X}_k \mid  \mathcal{F}_{k-1}) = p \hat{S}_{k-1}/(k-1)$,  $\EE (\check{X}_k \mid  \mathcal{F}_{k-1}) = -p \check{S}_{k-1}/(k-1)$ as well as from (\ref{equation:asymptoticDiferenceHat}) the asymptotic behaviour $(\bh{a}_k - \bh{a}_{k-1}) \sim p k^{-(p+1)}$ while a similar computation yields  $(\bc{a}_k - \bc{a}_{k-1}) \sim p k^{p-1}$.
\begin{itemize}
    \item We first show that 
    \begin{align*}
        \lim_{n \to \infty} n^{-1} \sum_{k=1}^{ \lfloor nt \rfloor} P_k^{(c)}=0 \quad \text{a.s.}.
    \end{align*}From the identities and  asymptotic estimates we just recalled, we have 
\begin{align*}
     \check{S}_{k-1} ( \bc{a}_k - \bc{a}_{k-1} ) \EE (\hat{X}_k \mid  \mathcal{F}_{k-1}) \bh{a}_k 
     = \check{S}_{k-1} ( \bc{a}_k - \bc{a}_{k-1} )p \frac{\hat{S}_{k-1}}{k-1} \bh{a}_k 
     \sim \frac{\check{S}_{k-1}}{k} k^p  p^2 \frac{\hat{S}_{k-1}}{k-1} \bh{a}_k
\end{align*}
and since $\bh{a}_k \sim k^{-p}$,  we have for some constant $C$ large enough, 
\begin{equation*}
   n^{-1} \biggl| \sum_{k=1}^{\lfloor nt \rfloor } P_k^{(c)} \biggr|
     \leq   n^{-1} C \sum_{k=1}^{\lfloor nt \rfloor } \biggl| \frac{\check{S}_{k-1} }{k} \frac{\hat{S}_{k-1} }{k-1} \biggr| \leq n^{-1} C \| X \|_\infty \sum_{k=1}^{\lfloor nt \rfloor } \biggl| \frac{\hat{S}_{k-1} }{k-1} \biggr|
\end{equation*}
which converges a.s. towards $0$ as $n \uparrow \infty$ by Lemma \ref{thm:LLNDiffusive}.
\item Next, since \begin{align*}
     \hat{S}_{k-1} ( \bh{a}_k - \bh{a}_{k-1} ) \EE (\check{X}_k \mid  \mathcal{F}_{k-1}) \bc{a}_k 
     = -\hat{S}_{k-1} ( \bh{a}_k - \bh{a}_{k-1} )p \frac{\check{S}_{k-1}}{k-1} \bc{a}_k 
     \sim -\frac{\hat{S}_{k-1}}{k} k^{-p}  p^2 \frac{\check{S}_{k-1}}{k-1} \bc{a}_k
\end{align*} we can follow exactly the same line of reasoning in order to establish
\begin{align*}
      \lim_{n \to \infty} n^{-1} \sum_{k=1}^{ \lfloor nt \rfloor} P_k^{(b)}=0 \quad \text{a.s.}.
\end{align*}
\item Since  
\begin{equation*}
    (\bh{a}_k - \bc{a}_{k-1})(\bh{a}_k - \bc{a}_{k-1}) \sim p^2 k^{-2}
\end{equation*}
we deduce that 
\begin{equation*}
    \hat{S}_{k-1} ( \bh{a}_k - \bh{a}_{k-1} ) \check{S}_{k-1} ( \bc{a}_k - \bc{a}_{k-1} ) \sim    \hat{S}_{k-1} \check{S}_{k-1} k^{-2}
\end{equation*}
and we conclude as before that 
\begin{align*}
      \lim_{n \to \infty} n^{-1} \sum_{k=1}^{ \lfloor nt \rfloor} P_k^{(a)}=0 \quad \text{a.s.}.
\end{align*}
\item Finally, since by definition 
\begin{equation*}
    \hat{X}_k = X_k \mathbf{1}_{\{ \epsilon_k = 0 \}} + \hat{X}_{U[k-1]}\mathbf{1}_{\{ \epsilon_k = 1 \}}, \qquad 
    \check{X}_k = X_k \mathbf{1}_{\{ \epsilon_k = 0 \}} - \check{X}_{U[k-1]}\mathbf{1}_{\{ \epsilon_k = 1 \}}
\end{equation*}
we have 
\begin{align*}
    \bh{a}_k \bc{a}_k E ( \check{X}_k \hat{X}_k  \mid \mathcal{F}_{k-1}) 
    &= \bh{a}_k \bc{a}_k E ( {X}_k ^2 \mathbf{1}_{\{ \epsilon_k = 0 \}} \mid \mathcal{F}_{k-1})
    - \bh{a}_k \bc{a}_k E ( \check{X}_{U[k-1]} \hat{X}_{U[k-1]} \mathbf{1}_{\{ \epsilon_k = 1 \}} \mid \mathcal{F}_{k-1}) \\
    & = \bh{a}_k \bc{a}_k (1-p) \sigma^2 
    - \bh{a}_k \bc{a}_k \sum_{j=1}^{k-1}  E ( \check{X}_{j} \hat{X}_{j} \mathbf{1}_{\{ \epsilon_k = 1 , U[k-1] = j \}} \mid \mathcal{F}_{k-1}).
\end{align*}
Since on one hand, $\check{X}_j,\hat{X}_j$ for $j < k$ are $\mathcal{F}_{k-1}$ measurable while $\epsilon_k, U[k-1]$ are independent of $\mathcal{F}_{k-1}$, denoting as $\check{G}$ the counterbalanced random walk made from the i.i.d. sequence $X^2_1, X^2_2, \dots$ from the same instance of the reinforcement algorithm,  we deduce 
\begin{align*}
 P^{(d)}_k 
 =   \bh{a}_k \bc{a}_k \left(  (1-p) \sigma^2 
    -\frac{1}{k-1} p \sum_{j=1}^{k-1}  \check{X}_{j} \hat{X}_{j} \right) 
 =  \bh{a}_k \bc{a}_k \left(  (1-p) \sigma^2 
    - p  \frac{\check{G}_{k-1}}{k-1}\right)
\end{align*}
and since  $\hat{a}_k \check{a}_k \rightarrow 1$ as $k \to \infty$, the problem boils down to studying the convergence as $n \uparrow \infty$ of 
\begin{equation*}
 \frac{1}{n} \sum_{k=1}^{\lfloor nt\rfloor} \left( (1-p)\sigma^2  
- p\frac{\check{G}(k-1)}{k-1}  \right).
\end{equation*}
The first term obviously converges towards $t(1-p) \sigma^2$ and we turn our attention to the second one. In that direction, at each $k$ we decompose $\check{G}(k) = \check{G}_1(k) + \sum_{j=1}^{k} \check{G}_j(k)$ where $\check{G}_i(k)$ consists exclusively of the sum of the steps that have been repeated $i$-times at step $k$. Since the steps have mean $m = \sigma^2$  we get  from Lemma 4.1 in \cite{BertoinCounterbalancing} (beware that $p$ here is $1-p$ in \cite{BertoinCounterbalancing})  that: 
\begin{enumerate}
    \item $\lim_{k \rightarrow \infty} k^{-1} \check{G}_1(k) = \sigma^2 (1-p)/(1+p)$ a.s.
    \item $ \lim_{k \rightarrow \infty}  k^{-1} \sum_{j=2}^k |\check{G}_j(k)| = 0$ in probability.
\end{enumerate}
Notice that   (b)   holds in $L^1(\PP)$ too since  $k^{-1} \sum_{j=2}^k |G_j(k)| \leq \| X \|_\infty$, as there are at most $k-1$ repeated steps at time $k$. Hence, 
\begin{equation*}
    - \lim_{n \rightarrow \infty}  \frac{p}{n} \sum_{k=1}^{\lfloor nt \rfloor} \frac{ \sum_{j=2}^{k-1} \check{G}_j(k-1)}{k-1} \rightarrow 0 \quad \quad \text{ in } L^1(\PP)
\end{equation*}
 and finally, we deduce that 
\begin{align*}
    -  \lim_{n \rightarrow \infty} \frac{p}{n}\sum_{k=1}^{\lfloor nt \rfloor} \frac{\check{G}(k-1)}{k} 
    = - \lim_{n \rightarrow \infty} \frac{p}{n}  \sum_{k=1}^{\lfloor nt \rfloor} \frac{\check{G}_1(k-1)}{k}  + \frac{\sum_{j=2}^k \check{G}_j(k-1)}{k} = - p \sigma^2 t \frac{1-p}{1+p} 
\end{align*}
in probability, by the almost sure convergence in Cesaro-mean. Putting all  pieces together, we conclude that  the following convergence holds in probability: 
\begin{equation*}
    \lim_{n \rightarrow \infty} \frac{1}{n} \sum_{k=1}^{\lfloor nt \rfloor} P_k^{(d)} = t(1-p) \sigma^2 - p \sigma^2 t\frac{1-p}{1+p}.
\end{equation*}
\end{itemize}
Bringing all our calculations above together we conclude  the convergence in probability, 
\begin{equation*}
    \lim_{n \rightarrow \infty} \langle \hat{N}^n, \check{N}^{n} \rangle_t = \sigma^2 (1-p) t  - p \sigma^2 (1-p)(1+p)^{-1} t.
\end{equation*}
\end{enumerate}
This concludes the proof of the lemma.
\end{proof}
With this, we conclude the proof of Theorem \ref{thm:ConvergenceTripletDifusive} when $X$ is bounded with an appeal to Lemma \ref{lemma:asympnegliblejumps}, Lemma \ref{lemma:quadvarconv} and the MFCLT (Theorem \ref{thm:FCLT}). 
\section{Reduction to the case of bounded steps.} \label{section:reductionArgument}
In this section, we shall only assume that the typical step $X \in L^2(\PP)$ of the step-reinforced random walk $\hat{S}$ is centred and no longer that it is bounded. We shall complete the proof of Theorem \ref{thm:ConvergenceTripletDifusive} by means of the truncation argument reminiscent to the one of Section 4.3 in \cite{BertoinUniversality}.
\subsection{Preliminaries}
The reduction argument relies on the following lemma taken from \cite{Processes}, that we state for the  reader's convenience: 
\begin{lem}[Lemma 3.31 in Chapter VI of \cite{Processes}] \label{lemma:reductionLemma} \mbox{}\\
Let $(Z^n)$ be a sequence of $d$-dimensional rcll (càdlàg) processes and suppose that 
\begin{equation*}
    \forall N >0, \quad \forall \epsilon >0 \quad \quad \lim_{n \rightarrow \infty} \PP\left( \sup_{s \leq N} |Z_s^n| > \epsilon \right)=0.
\end{equation*}
If $(Y^n)$ is another sequence of $d$-dimensional rcll processes with $Y^n \Rightarrow Y$ in the sense of Skorokhod, then $Y^n + Z^n \Rightarrow Y$ in the sense of Skorokhod. 
\end{lem}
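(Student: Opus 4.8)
The plan is to treat this as a Slutsky-type ``converging together'' statement and to reduce it to a control of the Skorokhod distance between $Y^n+Z^n$ and $Y^n$. I would fix a metric $d$ generating the Skorokhod ($J_1$) topology on the Polish space $D(\RR^+,\RR^d)$ and invoke the standard converging-together theorem for weak convergence in a metric space: if $Y^n \Rightarrow Y$ and if $W^n,Y^n$ live on a common probability space with $d(W^n,Y^n)\to 0$ in probability, then $W^n\Rightarrow Y$. Applying this with $W^n:=Y^n+Z^n$, which indeed shares the probability space of $Y^n$ so that the sum is well defined, the whole task reduces to showing that $d(Y^n+Z^n,Y^n)\to 0$ in probability.

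The first step is a purely deterministic observation: although addition is not $J_1$-continuous in general, a \emph{uniformly} small perturbation displaces a path by a correspondingly small amount in $J_1$. On a compact interval $[0,N]$ the $J_1$ distance is
\[
d_N(f,g)=\inf_{\lambda}\Big(\|\lambda-\mathrm{id}\|_{\infty,[0,N]}\vee\|f-g\circ\lambda\|_{\infty,[0,N]}\Big),
\]
the infimum running over increasing homeomorphisms $\lambda$ of $[0,N]$. Choosing the identity time-change $\lambda=\mathrm{id}$ gives the crude but decisive bound
\[
d_N\big(Y^n+Z^n,\,Y^n\big)\ \le\ \sup_{s\le N}|Z^n_s|.
\]

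By hypothesis the right-hand side tends to $0$ in probability for every fixed $N$, and being bounded it also tends to $0$ in $L^1$, so that $\EE\big[d_N(Y^n+Z^n,Y^n)\wedge 1\big]\to 0$ for each $N$. The global Skorokhod metric used in \cite{Processes} is assembled from the local ones through a weighting of the form $d(f,g)=\int_0^\infty e^{-N}\big(d_N(f,g)\wedge 1\big)\,\mathrm{d}N$; taking expectations and applying dominated convergence in the variable $N$ (the integrands being dominated by $e^{-N}$) yields $\EE\big[d(Y^n+Z^n,Y^n)\big]\to 0$, hence $d(Y^n+Z^n,Y^n)\to 0$ in probability. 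Feeding this into the converging-together theorem of the first paragraph gives $Y^n+Z^n\Rightarrow Y$, as desired.

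The only genuinely delicate point is the passage from the local $J_1$ bound to the global metric. The metric that \cite{Processes} places on $D(\RR^+,\RR^d)$ uses time-changes defined on all of $\RR^+$ together with a penalisation of the time-change near each horizon, so one must check that the clean estimate $d_N(f+g,f)\le\|g\|_{\infty,[0,N]}$ survives in that global metric rather than merely in the $J_1$ metric on a fixed compact; this is exactly where the ``for every $N$'' in the hypothesis is consumed, the $e^{-N}$ weighting together with the truncation at $1$ ensuring that per-horizon smallness assembles into global smallness.
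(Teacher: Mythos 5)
The paper does not prove this statement at all: it is imported verbatim as Lemma 3.31 of Chapter VI in \cite{Processes} (Jacod--Shiryaev) and used as a black box, so there is no in-paper argument to compare against. Your blind proof is a correct self-contained derivation, and it follows the same route one would find in the source: the identity time-change gives the deterministic bound $d_N(f+g,f)\le \sup_{s\le N}|g(s)|$, local-uniform smallness in probability therefore forces the Skorokhod distance between $Y^n+Z^n$ and $Y^n$ to vanish in probability, and the converging-together (Slutsky) theorem on the Polish space $D(\RR^+,\RR^d)$ finishes the job. Two small points. First, the phrase ``being bounded it also tends to $0$ in $L^1$'' is loose --- $\sup_{s\le N}|Z^n_s|$ need not be bounded --- but the quantity you actually integrate, $d_N(\cdot,\cdot)\wedge 1$, is bounded by $1$, so the displayed conclusion $\EE[d_N\wedge 1]\to 0$ is fine. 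Second, the ``delicate point'' you flag about the global metric is genuinely routine: whatever variant of the metric on $D(\RR^+,\RR^d)$ one takes (the $e^{-N}$-weighted integral, the $2^{-N}$-weighted sum with truncation functions $k_N$ as in \cite{Processes}), the identity time-change remains admissible and the per-horizon estimate degrades at worst from $\sup_{s\le N}$ to $\sup_{s\le N+1}$, which the ``for every $N$'' hypothesis absorbs. So the proof stands as written.
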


Finally, we will need the following lemma concerning convergence on metric spaces: 
\begin{lem} \label{lemma:theReductionLemma} Let $(E,d)$ be a metric space and consider  $(a_n^{(m)} \, : \, m,n \in \mathbb{N})$ a family of sequences, with  $a_n^{(m)} \in E$ for all $n, m \in \mathbb{N}$. Suppose further that the following conditions are satisfied:
\begin{enumerate}[(i)]
    \item For each fixed $m$, $a_n^{(m)} {\rightarrow} a_\infty^{(m)}$ as $n \uparrow \infty$ for some element $a_\infty^{(m)} \in E$.
    \item $a_\infty^{(m)} {\rightarrow} a_{\infty}^{(\infty)}$ as  $m \uparrow \infty$, for some $a_\infty^{(\infty)} \in E$.
\end{enumerate}
Then, there exists a non-decreasing  subsequence $(b(n))_{n}$ with $b(n) \rightarrow  \infty$ as $n \uparrow \infty$,  for which the following convergence holds:
\begin{equation*}
    a_n^{(b(n))} {\rightarrow} a_\infty^{(\infty)} \quad \text{ as } n \uparrow \infty.
\end{equation*}
\end{lem}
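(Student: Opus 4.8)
The plan is to run a standard diagonal-extraction argument: for each level $m$ I will record a threshold $N_m$ beyond which the inner limit in (i) has been essentially attained, and then glue these thresholds together into a single non-decreasing schedule $n \mapsto b(n)$ that diverges slowly enough to ride along each of the $n$-limits while marching off to infinity in $m$.

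First I would invoke hypothesis (i) to choose, for every $m \in \NN$, an index $N_m \in \NN$ such that $d(a_n^{(m)}, a_\infty^{(m)}) < 1/m$ for all $n \geq N_m$. After replacing $N_m$ by $\max\{N_1,\dots,N_m\}+m$ if necessary, I may and will assume that the sequence $(N_m)_m$ is strictly increasing with $N_m \to \infty$ as $m \uparrow \infty$. Next I define the schedule $b$ by declaring $b(n) = 1$ for $n < N_1$ and $b(n) = m$ whenever $N_m \leq n < N_{m+1}$. By construction $b$ is non-decreasing, and since $N_m \to \infty$ we have $b(n) \to \infty$ as $n \uparrow \infty$; moreover this definition guarantees the crucial compatibility $n \geq N_{b(n)}$ for every $n$.

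With these choices in hand, the conclusion follows from a single application of the triangle inequality. For each $n$ I would estimate
\[
d\bigl(a_n^{(b(n))}, a_\infty^{(\infty)}\bigr) \leq d\bigl(a_n^{(b(n))}, a_\infty^{(b(n))}\bigr) + d\bigl(a_\infty^{(b(n))}, a_\infty^{(\infty)}\bigr).
\]
The first term is bounded by $1/b(n)$ because $n \geq N_{b(n)}$, and hence tends to $0$ as $n \uparrow \infty$ since $b(n) \to \infty$. The second term tends to $0$ as well: it is the tail of the sequence appearing in hypothesis (ii), evaluated along the diverging indices $b(n)$. Adding the two bounds yields $a_n^{(b(n))} \to a_\infty^{(\infty)}$, as claimed.

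There is no genuine obstacle here; the argument is routine. The only point requiring a little care is the bookkeeping that makes $b$ simultaneously non-decreasing \emph{and} divergent while still satisfying $n \geq N_{b(n)}$, which is exactly what forcing $(N_m)_m$ to be strictly increasing achieves. I would present the construction of $(N_m)_m$ and of $b$ explicitly so that these three properties are manifest, after which the triangle-inequality step is immediate.
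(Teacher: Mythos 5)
Your proof is correct and is essentially the same diagonal-extraction argument as the paper's: choose thresholds beyond which the inner limits are attained to within a prescribed tolerance, define a non-decreasing step-function schedule $b$ from them, and conclude by the triangle inequality. The only cosmetic difference is that you use all levels $m$ with tolerance $1/m$ while the paper first passes to a sub-collection $(m_k)$ with geometric tolerances $2^{-k}$; the one negligible imprecision in your write-up is that $n \geq N_{b(n)}$ fails on the finite initial segment $n < N_1$, which of course does not affect the limit.
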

\begin{proof}
Since the sequence $(a_\infty^{(m)})_m$ converges, we can find an increasing subsequence $m_1 \leq m_2 \leq \dots $ satisfying 
\begin{equation*}
    d(a_\infty^{(m_k)} , a_\infty^{(m_{k+1})}  ) \leq 2^{-k} \quad \quad \text{ for each } k \in \mathbb{N}.
\end{equation*}
Moreover, since for each fixed $m_k$ the corresponding sequence $(a_n^{(m_k)})_n$ converges, there exists a strictly increasing sequence $(n_k)_k$ satisfying that, for each $k$, 
\begin{align*}
     d(a_i^{(m_k)} , a_\infty^{(m_k)} ) \leq 2^{-k} \quad \quad \text{ for all } i \geq n_k.
\end{align*}
Now, we set
for $n < n_1$, $b(n) := m_1$ and for $k \geq 1$, 
$b(n) := m_k$ if $n_k\leq n < n_{k+1}$ and we claim $(a_n^{b(n)})_n$ is the desired sequence. Indeed, it suffices to observe that for $n_k \leq n < n_{k+1}$,
\begin{equation*}
    d(a_n^{(b(n))} , a_\infty^{(\infty)}) 
    = d(a_n^{(m_k)} , a_\infty) 
    \leq d(a_n^{(m_k)} , a_\infty^{(m_k)}) + d(a_\infty^{(m_k)}, a_\infty) \leq  2^{-k} + \sum_{i=k}^\infty 2^{-i}.
\end{equation*}
\end{proof}
\subsection{Reduction argument}
Recall that we are assuming that the typical step is centred.  During the course of this section we will use that the truncated versions of the counterbalanced and noise reinforced random walks are still  counterbalanced (resp. noise reinforced) random walks.

Indeed, notice that if $(\check{S}_n)$ and $(\hat{S}_n)$ have been built from the i.i.d. sequence $(X_n)_{n \geq 1}$  by means of the negative-reinforcement and positive-reinforcement algorithms described in the introduction,  splitting each $X_i$ for $i \in \mathbb{N}$ as 
\begin{equation*}
    X_i = X_i^{\leq K}  + X_i^{ > K} 
\end{equation*}
where respectively, 
\begin{align*}
    X_i^{\leq K} &:= X_i \mathbf{1}_{\{ |X_i| \leq K \}} - \EE \left( X_i \mathbf{1}_{\{ |X_i| \leq K \}} \right)  \\
    X_i^{ > K}  &:=  X_i \mathbf{1}_{\{ |X_i| > K \}} - \EE(X_i \mathbf{1}_{\{|X_i| > K \}}),
\end{align*}
 yields a natural decompositions for $(\check{S}_n)$ and $(\hat{S}_n)$ in terms of two counterbalanced (reps. noise reinforced) random walks:
\begin{align*}
     \check{S}_n =  \check{S}^{ \leq K}_n + \check{S}^{> K}_n, \hspace{20mm}
     \hat{S}_n =  \hat{S}^{ \leq K}_n + \hat{S}^{> K}_n
\end{align*}
where now $(\check{S}^{ \leq K}_n)$, $(\check{S}^{> K}_n )$ are counterbalanced versions with typical step centred and distributed respectively as 
\begin{equation}
     X^{\leq K} := X \mathbf{1}_{\{ |X| \leq K \}} - \EE \left( X \mathbf{1}_{\{ |X| \leq K \}} \right)
\end{equation}
and 
\begin{equation}
    X^{ > K}  :=  X \mathbf{1}_{\{ |X| > K \}} - \EE(X \mathbf{1}_{\{|X| > K \}}), 
\end{equation}
an  analogue statement holding in the reinforced case for $(\hat{S}^{ \leq K}_n)$, $(\hat{S}^{> K}_n )$.  Moreover, $X^{\leq K}$ is centred with variance ${\sigma}^2_K$ and $\sigma^2_K \rightarrow \sigma^2$ as $K \nearrow  \infty $ while the variance of $X^{> K}$ that we denote by $\eta^2_K$, converges towards zero as $K \uparrow \infty$. We will also write the respective  truncated random walk as 
\begin{equation*}
    S^{\leq K}_n = X_1^{\leq K} + \dots + X_n^{\leq K} 
    \hspace{20mm} 
    S^{> K}_n = X_1^{> K} + \dots + X_n^{>K} \quad \quad  \quad n \geq 1. 
\end{equation*}
Notice that $(S^{\leq K})$,  $(\hat{S}^{ \leq K}_n)$ and $(\check{S}^{ \leq K}_n)$  have  now  \textit{bounded} steps, allowing us to apply the result established in Section \ref{section:proofBoundedSteps} to this triplet.
\begin{rem}
We point out that while $(\hat{S}_n^{\leq K})$ can be simply obtained by considering the NRRW made from the steps $X_i 1_{\{ |X_i| \leq K \}}$, $i \geq 1$  and substracting $n \EE(X 1_{\{ X \leq K \}})$ at the n-th step for each $n\geq 1$, and hence yielding a NRRW with steps given by 
\begin{equation*}
    \hat{X}_i1_{\{ |\hat{X}_i| \leq K \}} - \EE(X 1_{\{|X| \leq K \}}),
\end{equation*}
for the counterbalanced case we need to subtract the counterbalanced random walk issued from the constants $\EE( X_i 1_{\{|X_i| \leq K \}})$, $i \geq 1$ , which in contrast with the reinforced case, is a \textit{process} on its own right because of the sign swap.
\end{rem}

For each $k$, write as $N^{n,K}$ $\hat{N}^{n,K}$ and $\check{N}^{n,K}$ the corresponding martingales as defined in (\ref{equation:martingaleLimit1}) relative to  $S^{\leq K}$,  $\hat{S}^{\leq K}$ and $\check{S}^{\leq K}$ respectively. An application of Theorem \ref{thm:ConvergenceTripletDifusive} in the bounded case yields for every $K$, that 
\begin{equation*}
    \left(  N^{n,\leq K}_t, \hat{N}^{\leq K,n}_t , \check{N}^{\leq K,n}_t  \right)_{t \in \mathbb{R}^+} 
     \implies 
    \left( \sigma_K B,\,  \sigma_K \int_0^t s^{-p} d\beta^r_s, \,  \sigma_K \int_0^t s^{p} d\beta^c_s \right).   \label{convergence:DiffusiveBounded} 
\end{equation*}
However recalling the asymptotic behaviour  ${n^p}{  \bh{a}_{\lfloor nt \rfloor}} \sim t^{-p} \text{ as } n \rightarrow \infty$ and the definition of $N^{n, \leq K}$, we deduce that 
\begin{align*}
    \left(  \frac{{S}^{\leq K}(\lfloor nt \rfloor)}{\sqrt{n} } , \frac{\hat{S}^{\leq K}( \lfloor nt \rfloor )}{\sqrt{n}} , \check{N}^{\leq K, n}_t   \right)_{t \in \mathbb{R}^+} 
    & \implies 
    \left( \sigma_K B ,\,  \sigma_K t^p \int_0^t s^{-p} d \beta^r_s  , \, \sigma_K  \int_0^t s^{p} d\beta^c_s \right).    \label{convergence:DiffusiveBounded} 
\end{align*}
Since as $K \uparrow \infty$, the right hand side converges weekly towards $(\sigma B_t, \sigma t^p \int_0^t s^{-p} d \beta^r_s , \sigma \int_0^t s^{p} d\beta^c_s)$ and the convergence in distribution is metrisable, by Lemma \ref {lemma:theReductionLemma} there exists a slowly increasing sequence  converging towards infinity that we denote as  $(K(n):  n \geq 1)$, satisfying that, as $n \uparrow \infty$,  
\begin{align*}
    \left(  \frac{{S}^{\leq K(n)}(\lfloor nt \rfloor)}{\sqrt{n} } , \frac{\hat{S}^{\leq K(n)}( \lfloor nt \rfloor )}{\sqrt{n}} , \check{N}^{\leq K(n), n}_t   \right)_{t \in \mathbb{R}^+} 
    & \implies 
    \left( \sigma B , 
    \sigma t^p \int_0^t s^{-p} d \beta_s , 
    \sigma  \int_0^t s^{p} d\beta_s \right).  
\end{align*}
On the other hand, for each $n$ we can clearly decompose 
\begin{align*}
 \left( \frac{ S (\lfloor nt \rfloor) }{\sqrt{n}}, \frac{\hat{S}( \lfloor nt \rfloor )}{\sqrt{n}}, \check{N}^{n}  \right)  
    &= 
 \left(  \frac{{S}^{\leq K(n)}(\lfloor nt \rfloor)}{\sqrt{n} } , \frac{\hat{S}^{\leq K(n)}( \lfloor nt \rfloor )}{\sqrt{n}} , \check{N}^{\leq K(n), n}_t   \right) \\
& \hspace{15mm} + \left(  \frac{{S}^{> K(n)}(\lfloor nt \rfloor)}{\sqrt{n} } , \frac{\hat{S}^{> K(n)}( \lfloor nt \rfloor )}{\sqrt{n}} , \check{N}^{> K(n), n}_t   \right),
\end{align*}
and in order to apply Lemma \ref{lemma:reductionLemma} we need the following lemma: 
\begin{lem} For any   sequence $(K(n): n \geq 1)$  increasing towards infinity 
the following limits hold: 
\begin{enumerate}[(i)]
    \item $\displaystyle \lim_{n \rightarrow \infty} \frac{1}{n} \EE \left( \sup_{k \leq nt} \left|{S}^{> K(n)}_k\right|^2 \right) = 0.$
    \item $\displaystyle \lim_{n \rightarrow \infty} \frac{1}{n} \EE \left( \sup_{k \leq nt} \left|\hat{S}^{> K(n)}_k \right|^2 \right) = 0, \qquad \text{for } p \in (0,1/2).$
    \item $\displaystyle \lim_{n \rightarrow \infty} \PP \left( \sup_{s \leq T} \left|\check{N}_s^{n, >K(n)} \right|^2 \geq \epsilon \right) = 0,$ \qquad for every $\epsilon >0$ and $p \in (0,1).$
\end{enumerate}
\end{lem}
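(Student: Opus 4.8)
The plan is to reduce all three statements to maximal inequalities already at our disposal, exploiting the single key fact that the truncated steps $X^{>K(n)}$ are centred with variance $\eta_{K(n)}^2$, and that $\eta_{K(n)}^2 \to 0$ as $n \uparrow \infty$ precisely because $K(n) \uparrow \infty$. In each case the relevant second moment factors as a power of $n$ (exactly cancelled by the normalisation) times the variance $\eta_{K(n)}^2$, so the whole expression vanishes. The observation making this work is that the constants in Lemma \ref{lem:maximalIneq} and in the asymptotics (\ref{equation:asymptSeries}), (\ref{formula:asyptoticSeriesCounterbalanced}) are numerical, i.e. independent of the step distribution; hence they apply verbatim to the tail walks with the variance $\eta_{K(n)}^2$ pulled out as a prefactor.

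For (i), I would note that $(S^{>K}_k)_k$ is a centred random walk, hence a martingale, whose increments have variance $\eta_K^2$. Doob's $L^2$ maximal inequality gives $\EE(\sup_{k \leq nt}|S^{>K}_k|^2) \leq 4\lfloor nt\rfloor\, \eta_K^2$; dividing by $n$ and taking $K=K(n)$ yields a bound $\leq 4t\,\eta_{K(n)}^2 \to 0$. For (ii), I would apply the first case of Lemma \ref{lem:maximalIneq} directly to the noise reinforced walk $\hat{S}^{>K}$ built from the tail steps: since $p \in (0,1/2)$ this gives $\EE(\sup_{k \leq nt}|\hat{S}^{>K}_k|^2) \leq c\,\eta_K^2\, nt$, and again division by $n$ together with $K=K(n)$ forces the limit to be $0$.

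Part (iii) is the main obstacle, both because it concerns the counterbalanced martingale $\check{N}^{n,>K}$ rather than $\sup|\check{S}^{>K}|$ directly, and because we must cover the full range $p \in (0,1)$, where no maximal inequality for $\check{S}$ is readily available. The plan is to work with the predictable quadratic variation. Writing $\check{N}^{n,>K}_t = n^{-(1/2+p)}\check{M}^{>K}_{\lfloor nt\rfloor}$ with $\check{M}^{>K}_k = \bc{a}_k \check{S}^{>K}_k$, so that $\langle \check{N}^{n,>K}\rangle_T = n^{-(1+2p)}\langle \check{M}^{>K}\rangle_{\lfloor nT\rfloor}$, I would insert the explicit formula (\ref{formula:TheQuadraticVariationBalanced}) for the tail walk, discard the manifestly non-positive term $-p^2(\check{S}^{>K}_{k-1}/(k-1))^2$, and use $\EE(\hat{V}^{>K}_{k-1}/(k-1)) = \eta_K^2$ to obtain $\EE(\langle \check{M}^{>K}\rangle_n) \leq \eta_K^2 \sum_{k=1}^n \bc{a}_k^2$. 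The asymptotics (\ref{formula:asyptoticSeriesCounterbalanced}) then give $\EE(\langle \check{N}^{n,>K}\rangle_T) \leq C\, T^{1+2p}\eta_K^2$ with $C$ independent of both $n$ and $K$.

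Finally, Doob's $L^2$ maximal inequality applied to the martingale $\check{N}^{n,>K}$ (whose supremum over $s \leq T$ is a supremum over $k \leq \lfloor nT\rfloor$) bounds $\EE(\sup_{s \leq T}|\check{N}^{n,>K}_s|^2)$ by $4\,\EE(\langle \check{N}^{n,>K}\rangle_T) \leq 4C\,T^{1+2p}\eta_K^2$; taking $K=K(n)$ and applying Markov's inequality delivers (iii). I expect the only delicate points to be the verification that the discarded term is indeed non-positive and the book-keeping ensuring the powers of $n$ cancel exactly against the normalisation $n^{-(1+2p)}$, leaving only the vanishing factor $\eta_{K(n)}^2$.
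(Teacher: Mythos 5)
Your proposal is correct and follows essentially the same route as the paper: Doob's $L^2$ inequality for the i.i.d. tail walk in (i), the maximal inequality of Lemma \ref{lem:maximalIneq} applied to the tail noise reinforced walk in (ii), and for (iii) the bound $\EE(\langle \check{N}^{n,>K}\rangle_T) \leq C\,\eta_{K(n)}^2\, n^{-(1+2p)}\sum_k \bc{a}_k^2$ obtained by dropping the non-positive term in (\ref{formula:TheQuadraticVariationBalanced}) and using $\EE(\hat{V}^{>K}_{k-1}) = (k-1)\eta_{K(n)}^2$, followed by Doob--Markov. The only cosmetic difference is that the paper invokes the maximal inequality $\PP(\sup_{s\le T}|\check N^{n,>K}_s|\ge\epsilon)\le \epsilon^{-2}\EE(\langle \check N^{n,>K}\rangle_T)$ in one step rather than splitting it into an $L^2$ bound plus Markov.
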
 
\begin{proof} Recall that we denoted by $\eta^2_K$ the variance of  $X^{> K}$.
\begin{enumerate}[(i)]
    \item By Doob's inequality and independence of the steps we inmediatly get that 
\begin{equation*}
    \frac{1}{n} \EE \left( \sup_{k \leq nt} |{S}^{> K(n)}_k|^2 \right)  \leq \frac{4}{n}\eta_{K(n)} {\lfloor nt \rfloor}
\end{equation*}
which converges towards $0$ as $n \uparrow \infty$. 
\item From Lemma \ref{lem:maximalIneq} for $0<p<1/2$ we deduce that for any $t>0$,
\begin{align}
    \lim_{n \to \infty} \frac{1}{n} \EE \left( \sup_{k \leq nt} | \hat{S}^{>K(n)}(k)|^2 \right) &\leq c_1 \lim_{n \to \infty} \eta_{K(n)}^2 t =0, \label{ConditionDiffusive}
\end{align}
proving the claim.
\item Doob's maximal inequality yields 
\begin{align*}
    \mathbb{P}\left( \sup_{s \leq T} |\check{N}^{n, > K(n) }_s| \geq \epsilon \right) \leq \epsilon^{-2} \EE \left(\langle \check{N}^{n,> K(n) }, \check{N}^{n,> K(n) }  \rangle_T \right),
\end{align*}
and if we denote  by $\hat{V}^{> (n)}$  the sum of squared steps associated to $(S^{>K(n)})$, notice that  
\begin{align*}
  &\langle  \check{N}^{n, > K(n) }, \check{N}^{n, > K(n) }  \rangle_T \\
  & \hspace{10mm} \leq   \frac{1}{n^{1+2p}} \left(  \eta^2_{K(n)}+ \sum_{k=2}^{\lfloor nT \rfloor} \bc{a}^2_k \left( (1-p) \eta^2_{K(n)} + p \frac{\hat{V}^{> K(n)}_{k-1}}{k-1} \right) \right).  
\end{align*}
Recalling that  $\EE(\hat{V}_{k-1}^{> K(n)}) = (k-1)\eta_{K(n)}^2$, this yields the bound 
\begin{align*}
    \mathbb{P}\left( \sup_{s \leq T} |\check{N}^{n, K(n) }_s| \geq \epsilon \right)
    \leq \epsilon^{-2} \eta_{K(n)}^2  \frac{1}{n^{1+2p}} \left( 1+ \sum_{k=2}^{\lfloor nT \rfloor}\bc{a}^2_k   \right).
\end{align*}
Since on the one hand we have $\eta^2_{K(n)} \rightarrow 0$ as $n \uparrow \infty$ while on the other by (\ref{formula:asyptoticSeriesCounterbalanced}) it holds that  \begin{align*} \limsup_{n \uparrow \infty} n^{-(1+2p)} \sum_{k=2}^{\lfloor nT \rfloor} \bc{a}_k^2 < \infty,
\end{align*}
the desired convergence follows.
\end{enumerate}
This concludes the proof of the lemma.
\end{proof}
Now, recalling the definition of $\check{N}^{n}$, we deduce from Lemma \ref{lemma:reductionLemma} that as $n \uparrow \infty$,
\begin{equation*}
    \left( \frac{1}{\sqrt{n}}S_{\lfloor nt \rfloor}, \frac{1}{\sqrt{n}}\hat{S}_{\lfloor nt \rfloor},   \frac{1}{\sigma \sqrt{ n}} \frac{b_{\lfloor nt \rfloor }}{n^{p}} \check{S}_{\lfloor nt \rfloor } \right)_{t \in \mathbb{R}^+}   
    \implies \left( B_t , t^{p} \int_0^t s^{-p}d \beta^r_s , \int_0^t s^{p} d\beta^c_s  \right)_{t \in \mathbb{R}^+} 
\end{equation*}
and since $b_{\lfloor nt \rfloor}/ n^{p} \sim t^{p}$, we conclude that for any $\delta>0$,  the desired convergence 
\begin{equation*}
    \left( \frac{1}{\sqrt{n}}S_{\lfloor nt \rfloor}, \frac{1}{\sqrt{n}}\hat{S}_{\lfloor nt \rfloor}, \frac{1}{\sigma \sqrt{ n}}  \check{S}_{\lfloor nt \rfloor }  \right)_{t \in [\delta, \infty) }   
    \implies \left( B_t , t^{p} \int_0^t s^{-p}d \beta_s , \, t^{-p}\int_0^t s^{p} d\beta_s  \right)_{t \in [\delta, \infty)},
\end{equation*}
holds away for the origin (this restriction is due to the fact that $t^{-p}$ is unbounded on any neighbourhood of 0). In order to get the convergence on $\mathbb{R}^+$ and finally prove the claimed convergence in Theorem \ref{thm:ConvergenceTripletDifusive}, we proceed as follows: We will only work with the third coordinate, as it is the only one presenting the difficulty. The argument is readily adapted to the triplet. Assume without loss of generality that $\sigma^2 = 1$, fix $\delta >0$ and consider the partition of $[0,\delta]$, with points $\{ \delta 2^{-i} :\,  i = 0, 1, 2,  \dots \}$. Since the sequence $(\bc{a}_k)$ is increasing we obtain, 
\begin{align*}
    \mathbb{P} \left(\sup_{s \in [2^{-(i+1)} \delta , 2^{-i} \delta  ]} \frac{|\check{S}_{\lfloor ns \rfloor}|}{\sqrt{n}}  > \epsilon \right) 
    &\leq 
    \mathbb{P} \left( \sup_{s \in [2^{-(i+1)} \delta , 2^{-i} \delta  ]} \frac{\bc{a}_{\lfloor ns \rfloor}}{\bc{a}_{\lfloor n2^{-(i+1)}\delta  \rfloor}} \frac{|\check{S}_{\lfloor ns \rfloor}|}{\sqrt{n}}  > \epsilon \right) \\
    &\leq 
    \mathbb{P} \left( \sup_{s \in [2^{-(i+1)} \delta , 2^{-i} \delta  ]} {\bc{a}_{\lfloor ns \rfloor}} |\check{S}_{\lfloor ns \rfloor}|  > \epsilon \cdot \bc{a}_{\lfloor n2^{-(i+1)}\delta  \rfloor} {\sqrt{n}} \right)\\
    &= \frac{1}{\epsilon^2 \cdot  n \bc{a}^2_{\lfloor 2^{-(i+1)}  \delta n \rfloor}} \EE \left( \sup_{s \leq 2^{-i} \delta} |\bc{a}_{\lfloor ns \rfloor } \check{S}_{\lfloor ns \rfloor} |^2 \right). 
\end{align*}
Denoting as usual by $(\check{M}_n)$ the martingale $(\bc{a}_n\check{S}_n)_{n \geq 0}$, notice that by (\ref{formula:TheQuadraticVariationBalanced}), the remark that follows,  and (\ref{formula:asyptoticSeriesCounterbalanced}),  
\begin{equation*}
    \EE(\check{M}_n^2) = \EE(\langle \check{M},\check{M} \rangle_n ) \leq c \sum_{k=1}^n \bc{a}_k^2 \leq c n^{1+2p}
\end{equation*}
for some constant $c$ that might change from one inequality to the other. We deduce by Doob's inequality 
\begin{align*}
    \mathbb{P} \left(\sup_{s \in [2^{-(i+1)} \delta , 2^{-i} \delta  ]} \frac{|\check{S}_{\lfloor ns \rfloor}|}{\sqrt{n}}  > \epsilon \right) 
    &\leq 
    c \left( 2^{-i } \delta n\right)^{1+2p}   \frac{1}{\epsilon^2 \cdot  n \bc{a}^2_{\lfloor 2^{-(i+1)}  \delta n \rfloor}} \\
    & = c 2^{-i(1+2p)} \delta^{1+2p} \frac{n^{2p}}{\bc{a}^2_{\lfloor 2^{-(i+1)}  \delta n \rfloor}} \\
\end{align*}
which, recalling the asymptotic behaviour $\bc{a}_n \sim n^p$, yields for some constant $c$ that might differ from one line to the other: 
\begin{align*}
    \sup_n  \mathbb{P} \left(\sup_{s \in [2^{-(i+1)} \delta , 2^{-i} \delta  ]} \frac{|\check{S}_{\lfloor ns \rfloor}|}{\sqrt{n}}  > \epsilon \right) 
    &\leq c 2^{-i(1+2p)} \delta^{1+2p} \cdot  2^{2p(i+1)}\delta^{-2p} \\
    &= c \cdot 2^{-i} \delta. 
\end{align*}
From the previous estimate, we deduce the uniform bound 
\begin{align} \label{equation:counterbalanceOriginBound}
    \mathbb{P} \left( \sup_{s \in [0,\delta ] }\frac{|\check{S}_{\lfloor ns \rfloor} |}{\sqrt{n}} > \epsilon  \right)
    & \leq \sum_{i=0}^\infty  \mathbb{P} \left(\sup_{s \in [2^{-(i+1)} \delta , 2^{-i} \delta  ]} \frac{|\check{S}_{\lfloor ns \rfloor}|}{\sqrt{n}}  > \epsilon \right)  \nonumber \\
    & \leq \sum_{i=0}^\infty \sup_n \mathbb{P} \left(\sup_{s \in [2^{-(i+1)} \delta , 2^{-i} \delta  ]} \frac{|\check{S}_{\lfloor ns \rfloor}|}{\sqrt{n}}  > \epsilon \right)  
    \leq  K \cdot \delta. 
\end{align}
Finally, write $X^{(n)} = (\frac{1}{\sqrt{n}} \check{S}_{\lfloor nt \rfloor})_{t \in \mathbb{R}^+}$. Since for any $\delta >0$ we have $(X^{(n)}_t)_{t \geq \delta} \Rightarrow (\check{B}_{t})_{t \geq \delta}$ as $n \uparrow \infty$ and of course $(\check{B}_{t +\delta })_{t \in \mathbb{R}^+} \Rightarrow (\check{B}_{t })_{t \in \mathbb{R}^+}$ as $\delta \downarrow 0$, we deduce that there exists some decreasing  sequence $(\delta(n)) \downarrow 0$ such that 
\begin{equation*}
    \left( X^{(n)}_{s+ \delta(n)} \right)_{s \in \mathbb{R}^+} \Rightarrow \check{B} \quad \quad \text{ as }n \uparrow \infty
\end{equation*}
while by (\ref{equation:counterbalanceOriginBound}), 
\begin{equation*}
    \sup_{s \in [0,\delta(n)]} X^{(n)}_s \rightarrow 0 \quad \quad \text{ in probability}.
\end{equation*}
This establishes that the convergence $\left(\frac{1}{\sqrt{n}} \check{S}_{\lfloor nt \rfloor }\right)_{t \in \mathbb{R}^+} \Rightarrow \check{B}$ holds on  $\mathbb{R}^+$ and with this, we conclude our proof of Theorem \ref{thm:ConvergenceTripletDifusive}.

\begin{rem}
In the process of proving Theorem \ref{thm:ConvergenceTripletDifusive} in Section \ref{section:proofBoundedSteps} and \ref{section:reductionArgument},  we showed also that if we no longer consider the noise-reinforced random walk, we can extend the convergence of the pair to $p \in (0,1)$,
\begin{equation} 
    \left( \frac{1}{\sigma \sqrt{n}} {S}_{\lfloor nt \rfloor},  \frac{1}{\sigma \sqrt{n}} \check{S}_{\lfloor nt \rfloor}  \right)_{t \in \mathbb{R}^+} \Longrightarrow \left(  B_t ,  \int_0^t s^{p}d \beta^c_s \right)_{t \in \mathbb{R}^+} 
\end{equation}
 where as usual $\beta^c$, $B$ are two Brownian motions with $\langle B,\beta^c \rangle_t = (1-p) t$, and that the result still holds if $p = 1$ if we assume $X$ follows the Rademacher distribution, in which case the processes are independent. This is precisely the content of Theorem \ref{theorem:InvarianceCounterBalancedCase}. Finally, Theorem \ref{theorem:InvarianceDiffusiveRegime} also follows by recalling that  $\hat{S}_n  - n \EE(X)$ is a centred positive step-reinforced random walk and hence falls in our framework. 
\end{rem}

\section{The critical regime for the positive-reinforced case: proof of Theorem \ref{theorem:InvarianceCriticalRegime}} \label{section:criticalRegime}
In this last section we turn our attention to the critical regime $p=1/2$ for the noise reinforced case  and prove the invariance principle with our martingale approach. The arguments are very similar and rely on  exploiting the  martingale defined in Proposition \ref{prop:MultMartStart}, the MFCLT and a truncation argument. The main difference comes from the fact that, for $p=1/2$, the asymptotic behaviour of $\sum_{k=1}^n \bh{a}_k^{2}$ is no longer the one claimed in (\ref{equation:asymptSeries}). Namely, as we pointed out previously, 
\begin{equation*}
    \lim_{n \rightarrow \infty} \frac{1}{\log(n)} \sum_{k=1}^n \bh{a}_k^2 = 1 
\end{equation*}
and the different scaling that we will use  makes impossible to couple the convergence with the random walk or the counterbalanced random walk. Once again, we start with a law of large numbers-type result:
\begin{lem} \label{thm:LLNcritical}
Suppose $\|X\|_\infty < \infty$. We have the almost sure convergence
\begin{align*}
    \lim_{n \to \infty} \frac{\hat{S}_n}{\sqrt{n} \log n}=0 \quad \text{a.s.}
\end{align*}
and fortiori we  have $\lim_{n \to \infty} n^{-1} {\hat{S}_n}=0 \quad \text{a.s.}$
\end{lem}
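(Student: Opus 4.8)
The plan is to mirror the argument of Lemma~\ref{thm:LLNDiffusive} almost verbatim, the only structural change being that the polynomial asymptotics (\ref{equation:asymptSeries}) valid for $p<1/2$ must be replaced by the logarithmic asymptotics (\ref{equation:asymptSeriesCritical}) governing the critical case $p=1/2$. Throughout I would work with the martingale $\hat M_n=\bh a_n\hat S_n$ of Proposition~\ref{prop:MultMartStart}, whose predictable quadratic variation is recorded in (\ref{formula:TheQuadraticVariation}).

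First I would bound the bracket from above. Dropping the nonpositive term $-p^2(\hat S_{k-1}/(k-1))^2$ in (\ref{formula:TheQuadraticVariation}) and using $\hat V_{k-1}/(k-1)\le\|X\|_\infty^2$, one obtains, exactly as in (\ref{notation:vn}),
\[
    \langle\hat M\rangle_n\le\nu_n:=\|X\|_\infty^2\sum_{k=1}^n\bh a_{k+1}^2 .
\]
The critical asymptotics (\ref{equation:asymptSeriesCritical}), namely $\sum_{k=1}^n\bh a_k^2\sim\log n$, then give $\nu_n\sim\|X\|_\infty^2\log n$; in particular $(\nu_n)$ is a deterministic sequence increasing to infinity, now only at logarithmic rather than polynomial speed.

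With this bound the strong law of large numbers for martingales (Theorem~1.3.24 in \cite{duflo}), applied precisely as in the proof of Lemma~\ref{thm:LLNDiffusive}, yields $\lim_{n\to\infty}\hat M_n/\nu_n=0$ almost surely. Since (\ref{equation:asymptSeriesCritical}) also gives $\bh a_n\sim n^{-1/2}$, so that $\bh a_n\sqrt n\to 1$, I would conclude by writing
\[
    \frac{\hat S_n}{\sqrt n\,\log n}=\frac{\hat M_n}{\bh a_n\sqrt n}\cdot\frac{1}{\log n}\sim\frac{\hat M_n}{\log n},
\]
and noting that, because $\nu_n\sim\|X\|_\infty^2\log n$ together with $\hat M_n/\nu_n\to 0$, the right-hand side tends to $0$ almost surely. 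The a fortiori statement then follows at once from the factorisation $n^{-1}\hat S_n=\big(\hat S_n/(\sqrt n\log n)\big)\cdot\big(\log n/\sqrt n\big)$, since $\log n/\sqrt n\to 0$.

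I do not anticipate a genuine obstacle: the reasoning is identical in structure to Lemma~\ref{thm:LLNDiffusive}, and the only real modification is that the normalising sequence $\nu_n$ now grows like $\log n$ rather than $n^{1-2p}$. The one point meriting a line of care is simply to record that $\nu_n\to\infty$ (guaranteed by (\ref{equation:asymptSeriesCritical})), so that the martingale strong law is applicable; this is exactly why the boundedness hypothesis $\|X\|_\infty<\infty$, entering through the crude bound $\hat V_{k-1}/(k-1)\le\|X\|_\infty^2$, is retained.
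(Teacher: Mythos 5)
Your proposal is correct and follows essentially the same route as the paper: the martingale $\hat M_n=\bh a_n\hat S_n$, the deterministic bound $\langle\hat M\rangle_n\le\nu_n\sim\|X\|_\infty^2\log n$ from the critical asymptotics (\ref{equation:asymptSeriesCritical}), and Duflo's martingale strong law. The only cosmetic difference is that the paper invokes the sharper form $\hat M_n^2/\nu_n=O(\log\log n)$ of that theorem and then divides by an extra $\log n$, whereas you use the plain conclusion $\hat M_n/\nu_n\to0$, which suffices just as well.
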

\begin{proof}
The proof of this statement follows along the same lines as the proof of Lemma  \ref{thm:LLNDiffusive}. Since $p=1/2$ we have now, with the notation introduced in (\ref{notation:vn}), that as $n \to \infty$,
\begin{align*}
    \nu_n \sim  K' \cdot \log n, 
\end{align*}
where $K'$ is a positive constant. That is, $\nu_n$ increases slowly to infinity with a logarithmic speed. We obtain again from Theorem 1.3.24 in \cite{duflo} that 
\begin{align*}
    \frac{\hat{M}_n^2}{\log n}=O ( \log \log n) \quad \text{a.s.}
\end{align*}
Hence, as $\hat{M}_n= \bh{a}_n \hat{S}_n$, the above readily implies that 
\begin{align*}
    \bh{a}_n^2 \frac{\hat{S}_n^2}{ \log n}=O( \log \log n) \quad \text{a.s.}
\end{align*}
Further, we deduce from (\ref{equation:asymptSeriesCritical}) that for $p=1/2$,  
$
    \lim_{n \to \infty} \bh{a}_n^2 \cdot {n}=  1
$
 and hence we deduce that 
\begin{align*}
    \frac{\hat{S}_n^2}{n \log n} = O( \log \log n) \quad \text{a.s.}
\end{align*}
which immediately implies the claim.
\end{proof}
We now prove the invariance principle under the assumption of boundedness for $X$.
\begin{proof}[Proof of Theorem \ref{theorem:InvarianceCriticalRegime} when $\|X\|_\infty < \infty$]
The proof relies on similar ideas to the ones used in the proof of Theorem  \ref{theorem:InvarianceDiffusiveRegime}. Recalling that, 
\begin{equation*}
    \bh{a}_k \sim k^{-p}  = k^{-1/2}  \quad \quad \text{ as } k \rightarrow \infty
\end{equation*}
from the substitution $k = \lfloor n^t \rfloor$, we deduce that 
\begin{equation*}
    \bh{a}_{\lfloor n^t \rfloor} \sim \frac{1}{n^{t/2}} .
\end{equation*}
  Then, the limit (\ref{equation:InvarianceCriticalRegime}) can equivalently be shown  by establishing the desired convergence towards $B=(B_t)_{t \geq 0}$ for the following  sequence of martingales: 
\begin{equation*}
    \left( \frac{\bh{a}_{\lfloor n^t \rfloor} }{ \sqrt{\log(n)}}  \hat{S}_{\lfloor n^t \rfloor} \right)_{t \in \mathbb{R}^+} \Longrightarrow ( \sigma  B_t)_{t \in \mathbb{R}^+}.
\end{equation*}
Once again, we denote 
\begin{equation*}
     (\hat{N}^n_t)_{t \in \mathbb{R}^+} = \left( \frac{\bh{a}_{\lfloor n^t \rfloor} }{  \sqrt{\log(n)}}  \hat{S}_{\lfloor n^t \rfloor} \right)_{t \in \mathbb{R}^+} 
\end{equation*}
and deduce as before that for each $n \in \mathbb{N}$, the  predictable quadratic variation of $\hat{N}^n$ is given by
\begin{align}
\label{formula:quadVariationCritialCase}
 & \langle \hat{N}^{(n)}, \hat{N}^{(n)} \rangle_t = \notag \\
 & \hspace{5mm} \frac{1}{ \log(n)} \left( \sigma^2+  \sigma^2  (1-p) \sum_{k=2}^{\lfloor n^t \rfloor} \bh{a}_{k}^2 
     - p^2 \sum_{k=2}^{\lfloor n^t \rfloor} \bh{a}_{k}^2 \left( \frac{\hat{S}_{k-1} }{k-1} \right)^2 + p \sum_{k=2}^{\lfloor n^t \rfloor} \bh{a}_{k}^2 \left( \frac{{\hat{V}}_{k-1} }{k-1}\right)  \right).
 \end{align}
By the MFCLT, in order to prove our claim it suffices to show that 
\begin{equation*}
    \lim_{n \rightarrow \infty} \langle \hat{N}^{(n)}, \hat{N}^{(n)} \rangle_t =  \sigma^2 t \quad \quad \text{ a.s.}
\end{equation*}
and that $\sup_t |\Delta \hat{N}^{(n)}_t| \rightarrow 0$ in probability as $n \rightarrow \infty$.  Since $\|X\|_\infty < \infty$, this last requirement follows from very similar arguments to the ones we used in the proof of Theorem \ref{theorem:InvarianceDiffusiveRegime}. 
On the other hand, since $\log(\lfloor n^t \rfloor) / \log(n) \rightarrow t$ as $n \rightarrow \infty$ and by (\ref{equation:asymptSeriesCritical}), the first nontrivial term of (\ref{formula:quadVariationCritialCase}) satisfies the following convergence
\begin{equation*}
    \lim_{n \rightarrow \infty} \frac{1}{ \log(n)}  (1-p) \sum_{k=1}^{\lfloor n^t \rfloor} \bh{a}_{k}^2  = t(1-p).
\end{equation*}
By the same arguments we used in the proof of Theorem \ref{theorem:InvarianceDiffusiveRegime} but using the law of large numbers for the critical regime (Lemma \ref{thm:LLNcritical}), we obtain that the second term in (\ref{formula:quadVariationCritialCase}) converges to zero while for the last term, 
\begin{equation*}
     \lim_{n \rightarrow \infty} \frac{1}{l \log(n)} p \sum_{k=1}^{\lfloor n^t \rfloor} \bh{a}_{k}^2 \frac{{\hat{V}}_{k} }{k} = t \cdot p { \sigma^2 } \quad \text{ a.s.}
\end{equation*}
It follows that $\langle \hat{N}^{(n)} , \hat{N}^{(n)} \rangle_t \rightarrow t { \sigma^2 }$ for each $t$ as $n \rightarrow \infty$, which proves the desired result under the additional assumption that $\|X\|_\infty < \infty$.
\end{proof}
Now we establish the general case by means of the usual reduction argument. We will not be as detailed as before, since the ideas are exactly the same. We do still assume without loss of generality that the steps are centred.  
\begin{proof}[Proof of Theorem \ref{theorem:InvarianceCriticalRegime}, general case]
Maintaining the notation introduced for the truncated reinforced random walks of Section \ref{section:reductionArgument} as well as for the respective variances $\eta_K$ and $\sigma_K$ for $K > 0$,   Theorem \ref{theorem:InvarianceCriticalRegime} in the bounded step case shows for each $K >0$ the convergences in distribution as $n$ tends to infinity in the sense of Skorokhod, 
\begin{align}
    \left( \frac{\hat{S}^{\leq K}( \lfloor n^t \rfloor)}{\sqrt{ \log (n) n^t}} \right)_{t \in \mathbb{R}^+} & \implies \left( \sigma_K B(t)\right)_{t \in \mathbb{R}^+}   \label{convergence:Criticalbounded}
\end{align}
and from  $\lim_{K \to \infty} \sigma_K = \sigma$, it follows readily from   (\ref{convergence:Criticalbounded})  and the same arguments as before that  as $n$ tends to infinity, 
\begin{align*}
    \left(  \frac{\hat{S}^{\leq K(n))}( \lfloor n^t \rfloor)}{\sqrt{ \log (n) n^t}} \right)_{t \in \mathbb{R}^+} & \implies (\sigma {B}(t)  )_{t \in \mathbb{R}^+}
\end{align*}
for some increasing sequence $(K(n))_{n \geq 0}$ of positive real numbers converging towards infinity. On the other hand, from Lemma \ref{lem:maximalIneq} for $p=1/2$ we deduce that 
\begin{equation*}
      \lim_{n \to \infty} \frac{1}{n^t \log (n)} \EE \left( \sup_{k \leq n^t} | \hat{S}^{>b(n)} (k)|^2 \right) \leq  c_2 \lim_{n \to \infty}  \eta_{K(n)}^2 t =0 \label{ConditionCritical}
\end{equation*}
and from here we can  proceed as we did in the previous section. With this, we conclude the proof of  Theorem \ref{theorem:InvarianceCriticalRegime}.
\end{proof}

{
\textit{Acknowledgement: We warmly thank Jean Bertoin and Erich Baur for the all the fruitful discussions and feedback, as well as for introducing us to the subject.}
}
\bibliographystyle{plainurl} 
\bibliography{bib}
\end{document}